\documentclass[a4paper,american,reqno]{amsart}

\newif\ifPreprint \Preprinttrue
\newif\ifSubmission \Submissionfalse

\usepackage{babel}
\usepackage[utf8]{inputenc}
\usepackage[T1]{fontenc}
\usepackage{csquotes}
\usepackage[binary-units=true]{siunitx}
\usepackage{url}
\usepackage{xspace}
\usepackage{relsize}
\usepackage{enumitem}
\usepackage{algorithm,algorithmic}
\usepackage{todonotes}
\usepackage{csquotes}
\usepackage{microtype}
\usepackage{accents}
\usepackage{notationForne}
\usepackage[style = numeric-comp,
            maxbibnames = 100,
            maxcitenames = 2,
            firstinits=true,
            isbn=false,
            backend = bibtex]{biblatex}
\usepackage{longtable}
\usepackage{booktabs}
\usepackage{hyperref} 

\makeatletter
\patchcmd{\@settitle}{\uppercasenonmath\@title}{\scshape\large}{}{}
\patchcmd{\@setauthors}{\MakeUppercase}{\scshape\normalsize}{}{}
\makeatother

\tolerance 1414
\hbadness 1414
\emergencystretch 1.5em
\hfuzz 0.3pt
\widowpenalty=10000
\vfuzz \hfuzz
\raggedbottom


\newtheorem{theorem}{Theorem}[section]
\newtheorem{lemma}[theorem]{Lemma}

\newtheorem{assumption}{Assumption}

\theoremstyle{definition}

\theoremstyle{remark}



\bibliography{padm-feas-pump}

\begin{document}

\title[Penalty ADMs for Mixed-Integer Optimization]%
{Penal\/ty Alternating Direction Methods\\for Mixed-Integer
  Optimization:\\A New View on Feasibility Pumps}
\author[B. Gei{\ss}ler, A. Morsi, L. Schewe, M. Schmidt]
{Björn Gei{\ss}ler$^1$, Antonio Morsi$^1$, Lars Schewe$^1$, Martin
  Schmidt$^2$}
\address{$^1$Björn Gei{\ss}ler, Antonio Morsi, Lars Schewe,
  Friedrich-Alexander-Universit\"at Erlangen-Nürn\-berg (FAU),
  Discrete Optimization,
  Cauerstr.~11, 91058 Erlangen, Germany\\
  $^2$Martin Schmidt,
  (a)
  Friedrich-Alexander-Universit\"at Erlangen-Nürn\-berg (FAU),
  Discrete Optimization,
  Cauerstr.~11, 91058 Erlangen, Germany;
  (b) Energie Campus N\"urnberg,
  F\"urther Str.~250, 90429 N\"urnberg, Germany}
\email{$^1$\{bjoern.geissler,antonio.morsi,lars.schewe\}@math.uni-erlangen.de}
\email{$^2$mar.schmidt@fau.de}

\date{\today}

\begin{abstract}
  Feasibility pumps are highly effective primal heuristics for
\mixedinteger linear and \nonlinear optimization.
However, despite their success in practice there are only few works
considering their theoretical properties.
We show that feasibility pumps can be seen as alternating
direction methods applied to special reformulations of the original
problem, inheriting the convergence theory of these methods.
Moreover, we propose a novel penalty framework that encompasses
this alternating direction method, which allows us to refrain from random
perturbations that are applied in standard versions of feasibility
pumps in case of failure.
We present a convergence theory for the new penalty based alternating
direction method and compare the new variant of the feasibility
pump with existing versions in an extensive numerical study for
\mixedinteger linear and \nonlinear problems.


\end{abstract}

\keywords{Mixed-Integer Nonlinear Optimization,
  Mixed-Integer Linear Optimization,
  Feasibility Pump,
  Alternating Direction Methods,
  Penalty Methods}

\subjclass[2010]{%
  65K05, 
  90-08, 
  90C10, 
  90C11, 
  90C59
}

\maketitle

Due to their practical relevance, \mixedinteger \nonlinear problems
(\minlps) form a very important class of optimization problems.
One important part of successful algorithms for the solution of such problems
is finding feasible solutions quickly.
For this, typically heuristics are employed.
These can be roughly divided into heuristics that improve known feasible solutions (\eg local
branching~\cite{Fischetti_Lodi:2003} or RINS~\cite{Danna_et_al:2005}) and
heuristics that construct feasible solutions from scratch.
This article discusses a heuristic of the latter type:
The algorithm of interest in this article is the
\socalled \emph{feasibility pump} that has originally been proposed
\ifPreprint
by~\citeauthor{Fischetti_et_al:2005}
\fi
in~\cite{Fischetti_et_al:2005}
for \mips and that has been extended by many other researchers, \eg
in~\cite{Achterberg_Berthold:2007,%
Baena_Castro:2011,%
Bertacco_et_al:2007,%
Boland_et_al:2012,%
Boland_et_al:2014,%
DeSantis_et_al:2013,%
DeSantis_et_al:2014,%
Fischetti_Salvagnin:2009,%
Hanafi_et_al:2010,%
dey2016improving}.
In addition, feasibility pumps have also been applied to \minlps
during the last years; \cf, \eg~\cite{Bonami_et_al:2009,%
Berthold:2014,%
Bonami_Goncalves:2012,%
DAmbrosio_et_al:2012,%
DAmbrosio_et_al:2010,%
Sharma:2013,%
Sharma_et_al:2015}.
A more detailed review of the literature about feasibility pumps is
given in \refsec{sec:feasibility-pumps}.
For a comprehensive overview over primal heuristics for \mixedinteger
linear and \nonlinear problems in general, we refer the interested reader to
\textcite{Berthold:2006,Berthold:2014} and the references
therein.

In a nutshell, feasibility pumps work as follows:
given an optimal solution of the continuous relaxation of the problem,
the methods construct two sequences.
The first one contains integer-feasible points, the second one
contains points that are feasible \wrt the continuous relaxation.
Thus, one has found an overall feasible point if these sequences converge to
a common point.
To escape from situations where the construction of the sequences gets
stuck and thus do not converge to a common point, feasibility pumps usually
incorporate randomized restarts.

The feasibility pumps described in the literature are difficult to
analyze theoretically due to the use of random perturbations.
These random perturbations are, however, crucial to the practical performance
of the methods.
The main object of the existing theoretical analysis is
the \emph{idealized
feasibility pump}, \ie the method without random perturbations.
This is the method analyzed in the publications~\cite{DeSantis_et_al:2013}
and~\cite{Boland_et_al:2012}.
To be more specific,
\ifPreprint
\citeauthor{DeSantis_et_al:2013} show in~\cite{DeSantis_et_al:2013}
\fi
\ifSubmission
in~\cite{DeSantis_et_al:2013} it is shown
\fi
that idealized feasibility pumps are a
special case of the Frank--Wolfe algorithm applied to a suitable chosen concave
and \nonsmooth merit function
\ifPreprint
and~\citeauthor{Boland_et_al:2012} showed in~\cite{Boland_et_al:2012}
\fi
\ifSubmission
and in~\cite{Boland_et_al:2012} it is shown
\fi
that idealized feasibility pumps can be
seen as discrete versions of the proximal point algorithm.
However, the analysis presented in both mentioned publications cannot
be applied to feasibility pumps that use random perturbations.
Recently, there has been progress in the understanding of
the randomization step.
In \cite{dey2016improving}, the authors show that changing the randomization
step can yield a method that, at least for certain instances, is guaranteed
to produce feasible solutions with high probability.

Our approach is similar to these publications. We show
that the idealized variant can be seen as an
\emph{alternating direction method} (\adm) applied to a special
reformulation of the \mixedinteger problem at hand.
To this end, we extend the known theory on feasibility pumps by
applying the convergence theory of general \adms.
We then go one step further:
The necessity to use random perturbations comes
from the need to escape from undesired points.
We replace these random perturbations of the original feasibility pump
by a penalty framework.
This allows us to view feasibility pumps as penalty based alternating
direction methods---a new class of optimization methods for which we
also present convergence theory.
In summary, we are able to give a convergence theory for a class of feasibility
pumps that incorporates deterministic restart rules.
Another advantage is that our method can be presented in a quite
generic way that comprises both the case of \mixedinteger linear and
\nonlinear problems.

We further give extensive computational results to show that our
replacement of the random restarts also works well in practice.
In particular, our method compares favorably with published variants
of feasibility pumps for \mips and \minlps\ \wrt solution quality.

The paper is organized as follows:
In \refsec{sec:feasibility-pumps} we review the main ingredients of
feasibility pumps and give a more detailed literature survey.
Afterward, we discuss general \adms in \refsec{sec:adm} and show
that idealized feasibility pumps can be seen as \adms applied to
certain equivalent reformulations of the original problem.
In \refsec{sec:padm}, we then present a penalty ADM, prove convergence
results, and show how this new method can be used to obtain
a novel feasibility pump algorithm that replaces
random restarts with penalty parameter updates.
In \refsec{sec:implementation-issues} we discuss important
implementation issues and \refsec{sec:comp-results} finally presents
an extensive computational study both for \mips and \minlps.%

\section{Feasibility Pumps}
\label{sec:feasibility-pumps}

In this section we give an overview over feasibility pump
algorithms for \mixedinteger linear problems (\mips) as well as for
\mixedinteger \nonlinear problems (\minlps).
We start with the \mip case in \refsec{sec:feas-pumps-for-mips} and
afterward discuss generalizations for \nonlinear problems in
\refsec{sec:feas-pumps-for-minlps}.
General surveys on this topic can be found in \textcite{Berthold:2014}
and \textcite{Bonami_et_al:2012}.
\subsection{Feasibility Pumps for \MixedInteger Linear Problems}
\label{sec:feas-pumps-for-mips}

The feasibility pump has been introduced
\ifPreprint
by~\textcite{Fischetti_et_al:2005}
\fi
\ifSubmission
in~\textcite{Fischetti_et_al:2005}
\fi
for binary \mips and has been
extended to general \mips
\ifPreprint
by~\textcite{Bertacco_et_al:2007}.
\fi
\ifSubmission
in~\textcite{Bertacco_et_al:2007}.
\fi
The goal of the feasibility pump is to find a feasible
point of a \mixedinteger linear problem of the general form
\begin{subequations}
  \label{eq:MIP}
  \begin{align}
    \min_x \quad & \transpose{c} x \label{eq:MIP:objective}\\
    \st \quad & Ax \geq b,\label{eq:MIP:linear_constraints}\\
                 & x_\intIdx \in \integers
                   \qConstraintFor{\intIdx \in \intIdxSet},
                   \label{eq:MIP:integer_constraints}
  \end{align}
\end{subequations}
where $c \in \reals^n$, $b \in \reals^m$, $A \in \reals^{m \times n}$,
and $\emptyset \neq \intIdxSet \subseteq \set{1, \dotsc, n}$.
Moreover, we assume that variable bounds
$\lb \leq x \leq \ub$ with $-\infty < \lb_\intIdx \leq
\ub_\intIdx < \infty$ for all $\intIdx \in \intIdxSet$ are part of $Ax
\geq b$.
We refer to the polyhedron of the LP relaxation of~\eqref{eq:MIP} by
$\polyhedron$, \ie $\polyhedron = \condset{x \in \reals^n}{Ax \geq b}$.
Throughout the paper we assume that $\polyhedron \neq \emptyset$.
The main idea of the feasibility pump is to create two sequences
%
$(\bar{x}^k)$ and $(\tilde{x}^k)$ such that $\bar{x}^k \in
\polyhedron$ and $\tilde{x}^k$ is integer feasible, \ie $\tilde{x}^k_\intIdx
\in \integers$ for all $\intIdx \in \intIdxSet$.
In addition, the construction of the sequences is tailored to minimize
the distance of the pairs $\bar{x}^k$ and $\tilde{x}^k$.
The algorithm terminates after a given time, after an iteration limit
has been reached, or if the distance is zero, \ie $\bar{x}^k = \tilde{x}^k$.
In the latter case, the algorithm terminates with an \mip-feasible
point, \ie a point that is both in $\polyhedron$ and integer
feasible.
We now describe the method in detail for the case of $0$-$1$-\mips,
\ie we replace $x_\intIdx \in \integers$ by $x_\intIdx \in \set{0,1}$
in~\eqref{eq:MIP:integer_constraints}.
The initial point~$\bar{x}^0$ is computed to be an optimal solution of
the LP relaxation of~\eqref{eq:MIP}, \ie $\bar{x}^0 \in \argmin
\condset{\transpose{c} x}{x \in \polyhedron}$, and the initial
point~$\tilde{x}^0$ of
the other sequence is the rounding~$\rounding{\bar{x}^0}$ of
the integer components of $\bar{x}^0$.
Note that the rounding operator $\rounding{\cdot}$ only rounds
integer components, \ie
\begin{equation*}
  \rounding{x_\intIdx} \define
  \begin{cases}
    \lfloor x_\intIdx + 0.5 \rfloor , & \text{if } \intIdx \in \intIdxSet,
    \\
    x_\intIdx, & \text{otherwise}.
  \end{cases}
\end{equation*}
From then on, in each iteration~$k$ the new iterate $\bar{x}^{k+1}$ is
the nearest point (\wrt the integer components) to $\tilde{x}^k$ in
the $\ell_1$~norm, \ie
\begin{equation*}
  \bar{x}^{k+1} \in \argmin
  \condset{\norm{x_\intIdxSet-\tilde{x}_\intIdxSet^k}_1}{x \in \polyhedron}
\end{equation*}
and $\tilde{x}^{k+1} \define \rounding{\bar{x}^{k+1}}$.
Here and in what follows, $x_\intIdxSet$ denotes the sub-vector of $x$
only consisting of the components indicated by the index
set~$\intIdxSet$.
After every rounding step a cycle and a stalling test
decides whether a random perturbation of the integer part of
$\tilde{x}^k$ is applied.
The details can be found in
\textcite{Fischetti_et_al:2005,Bertacco_et_al:2007}.
A formal listing of the basic feasibility pump for binary \mips is
given in~\refalgo{alg:basic-fp-0-1-mips}.
\begin{algorithm}[tp]
  \caption{The basic feasibility pump for $0$-$1$-MIPs}
  \label{alg:basic-fp-0-1-mips}
  \begin{algorithmic}[1]
    \STATE{Compute $\bar{x}^0 \in \argmin \condset{\transpose{c}
        x}{x \in \polyhedron}$.}
    \IF{$\bar{x}^0$ is integer feasible}
    \RETURN $\bar{x}^0$
    \ENDIF
    \STATE{Set $\tilde{x}^0 = \rounding{\bar{x}^0}$ and $k \leftarrow 0$.}
    \WHILE{not termination condition}
    \STATE{Compute $\bar{x}^{k+1} \in \argmin \condset{\norm{x_\intIdxSet -
          \tilde{x}_\intIdxSet^{k}}_1}{x \in
        \polyhedron}$.\label{alg:basic-fp-0-1-mips:projection-step}}
    \IF{$\bar{x}^{k+1}$ is integer feasible}
    \RETURN $\bar{x}^{k+1}$
    \ENDIF
    \STATE{Set $\tilde{x}^{k+1} =
      \rounding{\bar{x}^{k+1}}$.\label{alg:basic-fp-0-1-mips:rounding-step}}
    \IF{algorithm stalls or cycles}
    \STATE{perturb $\tilde{x}^{k+1}$\label{alg:basic-fp-0-1-mips:perturb-step}}
    \ENDIF
    \STATE{Set $k \ot k+1$.}
    \ENDWHILE
  \end{algorithmic}
\end{algorithm}%
In what follows, Line~\ref{alg:basic-fp-0-1-mips:projection-step} of
\refalgo{alg:basic-fp-0-1-mips} is referred to as the \emph{projection
  step} and Line~\ref{alg:basic-fp-0-1-mips:rounding-step} is called
the \emph{rounding step}.
Note that the projection step can be written as a linear program by
reformulating the $\ell_1$~norm objective as
\begin{equation*}
  \norm{x_\intIdxSet - \tilde{x}_\intIdxSet}_1
  = \sum_{\intIdx \in \intIdxSet: \tilde{x}_\intIdx = 0} x_\intIdx
  + \sum_{\intIdx \in \intIdxSet: \tilde{x}_\intIdx = 1}
  (1-x_\intIdx).
\end{equation*}
This is also possible for general \mips but then requires
the introduction of auxiliary variables and constraints;
\cf~\textcite{Bertacco_et_al:2007} for the details.
The original feasibility pump is very successful in
quickly finding feasible solutions.

However, these solutions are often of minor quality.
Thus, improvements of the original version with the goal of developing
variants of the feasibility pump that are comparable in running time as well
as success rate and provide solutions of better quality are studied in
many publications.
\ifPreprint
\textcite{Fischetti_Salvagnin:2009}
\fi
\ifSubmission
Fischetti~et~al.~\cite{Fischetti_Salvagnin:2009}
\fi
improved the rounding step by
replacing the simple rounding $\tilde{x}^k = \rounding{\bar{x}^k}$ with a
procedure based on constraint propagation.
Other strategies of improving the rounding step are given in
\textcite{Baena_Castro:2011}, where simple rounding is replaced with
rounding of different candidate points on a line segment between the
solution of the projection step and the analytic center of the LP
polyhedron, and in \textcite{Boland_et_al:2014}, where an integer line
search is applied to find integer feasible points that are closer
to~$\polyhedron$ than the points achieved by simple rounding.
In contrast to these approaches, \textcite{Achterberg_Berthold:2007}
improved the projection step in order to achieve feasible solutions of
better quality by replacing the $\ell_1$~norm
objective~$\norm{x_\intIdxSet - \tilde{x}_\intIdxSet}_1$ with a convex
combination of this distance measure and the original objective
function:
\begin{equation*}
  (1 - \alpha) \norm{x_\intIdxSet - \tilde{x}_\intIdxSet}_1
  + \alpha \beta \transpose{c} x.
\end{equation*}
Here, $\alpha \in [0,1]$ is the convex combination parameter and
$\beta \in \reals_{>0}$ is a problem data depending scaling parameter;
\cf~\cite{Achterberg_Berthold:2007} for the details.

None of the papers cited so far contains any theoretical results on
the feasibility pump.
This situation changed with a remark in
\textcite{Eckstein_Nediak:2007} noticing that the idealized
feasibility pump for binary \mips can be interpreted as a Frank--Wolfe
algorithm (\cf~\textcite{Frank_Wolfe:1956}) applied to the
minimization of a concave and \nonsmooth objective function over a
polyhedron.
\ifPreprint
\textcite{DeSantis_et_al:2013}
\fi
\ifSubmission
The authors of \cite{DeSantis_et_al:2013}
\fi
seized this idea and proved this
correspondence, yielding the first theoretical result for the
feasibility pump.
To be more precise, they used the result shown
\ifPreprint
by~\citeauthor{Mangasarian:1997} in~\cite{Mangasarian:1997}
\fi
\ifSubmission
in~\cite{Mangasarian:1997}
\fi
that the Frank--Wolfe algorithm applied to the above given situation
terminates after a finite number of iterations and returns a \socalled
\emph{vertex stationary point} of the problem.
We discuss the relation of this result with our results in more detail
in \refsec{sec:fps-as-adms}.
In~\cite{DeSantis_et_al:2014},
\ifPreprint
\citeauthor{DeSantis_et_al:2014}
\fi
\ifSubmission
the authors
\fi
generalized their results to general \mips.
Another theoretical result is presented
\ifPreprint
by~\citeauthor{Boland_et_al:2012}
\fi
in~\cite{Boland_et_al:2012}, where
it is shown that the idealized feasibility pump can be interpreted as a
discrete version of the proximal point algorithm.
We remark that both cited theoretical investigations only consider the
case of idealized feasibility pumps, \ie the variants of feasibility
pumps without random perturbations used to handle cycling or stalling
issues.


\subsection{Feasibility Pumps for \MixedInteger \Nonlinear Problems}
\label{sec:feas-pumps-for-minlps}

We now turn to feasibility pumps for \mixedinteger
\nonlinear problems of the form
\begin{subequations}
  \label{eq:MINLP}
  \begin{align}
    \min_x \quad & \objFun(x)\label{eq:MINLP:objective}\\
    \st \quad & \ineqCons(x) \geq 0,\label{eq:MINLP:general-constraints}\\
                 & x_\intIdx \in \integers \cap [l_\intIdx, u_\intIdx]
                   \qConstraintFor{\intIdx \in \intIdxSet},
                   \label{eq:MINLP:integer-constraints}
  \end{align}
\end{subequations}
where the objective function~$\objFun : \reals^n \to \reals$ and the
constraints function~$\ineqCons : \reals^n \to \reals^\nrIneqCons$ are continuous.
The feasible set of the \nlp relaxation is denoted by
$\feasSetNLPRelax \define \condset{x \in \reals^n}{\ineqCons(x) \geq 0}$.
Again, we assume that $\feasSetNLPRelax \neq \emptyset$ holds and that
$\feasSetNLPRelax$ is compact.
Lastly, we assume that all bounds on the discrete variables are
finite, \ie $- \infty < l_\intIdx \leq u_\intIdx < \infty$ for all
$\intIdx \in \intIdxSet$.
The \minlp~\eqref{eq:MINLP} is said to be convex if $\objFun$ and $-\ineqCons$ are
convex.

For convex problems, the direct generalization of the feasibility pump
for \mips to \minlps is given in \textcite{Bonami_Goncalves:2012}:
the projection step \lp is replaced by an \nlp and the rounding step stays the
same, \ie $\tilde{x}^{k+1} \define \rounding{\bar{x}^{k+1}}$.
Cycling and stalling issues are again handled by random perturbations
of the integer components of~$x$.
Variants of this method are then deduced by modifying the rounding
step and by replacing the $\ell_1$ with the $\ell_2$~norm in the
projection step.
The feasibility pump for convex \minlp proposed
in~\textcite{Bonami_et_al:2009} significantly differs from the version
in~\cite{Bonami_Goncalves:2012} because it replaces the simple rounding
step by a \mip relaxation of~\eqref{eq:MINLP} that is successively
tightened by adding outer approximation
cuts (\cf~\textcite{Duran_Grossmann:1986}) based on the \nlp feasible
solutions obtained by solving the $\ell_2$~norm projection steps.
\ifPreprint
\citeauthor{Bonami_et_al:2009} study two versions of their algorithm;
\fi
\ifSubmission
In~\cite{Bonami_et_al:2009}, the authors study two versions of their
algorithm;
\fi
a basic and an enhanced one.
For their basic version it is shown that it cannot cycle if the LICQ
holds for the \nlp relaxation.
For their enhanced version it is shown that it cannot cycle and that it
is an exact method for solving convex \minlps if all integer variables
are bounded.

For \nonconvex \minlps the convergence results of
\textcite{Bonami_et_al:2009} do not hold because the outer
approximation cuts cannot be applied to the \nonconvex \nlp relaxation
and because the projection step is now a \nonconvex problem, which
is too hard to be solved to global optimality in general.
The article \textcite{DAmbrosio_et_al:2010} is the first presentation of a feasibility
pumps for \nonconvex \minlps.
The above mentioned issues are ``resolved'' by solving the \nonconvex
projection step \nlp via a multistart heuristic using local \nlp
solvers and the rounding step is realized by a \mip using outer
approximation cuts if they are globally valid for the \nonconvex
problem.
In the subsequent paper~\cite{DAmbrosio_et_al:2012},
\ifPreprint
\citeauthor{DAmbrosio_et_al:2012}
\fi
\ifSubmission
the authors
\fi
interpreted feasibility pumps for
general \nonconvex \minlps as variants of successive projection
methods (SPMs).
Despite their strong similarity, the authors observe that typical
feasibility pumps do not fall exactly into the class of SPMs, which is
why their convergence theory is not applicable.%

Finally, a generalization of the objective feasibility pump of
\textcite{Achterberg_Berthold:2007} is given in
\cite{Sharma:2013,Sharma_et_al:2015} for convex \minlp and
\ifPreprint
\textcite{Berthold:2014}
\fi
\ifSubmission
Berthold~\cite{Berthold:2014}
\fi
discusses some new algorithmic ideas for
\nonconvex \minlps. 



\section{Alternating Direction Methods}
\label{sec:adm}

In this section, we first brief\/ly review classical alternating
direction methods (ADMs) and afterward prove that idealized
feasibility pumps, \ie the basic feasibility pump
(Algorithm~\ref{alg:basic-fp-0-1-mips}) without random perturbations,
can be seen as a special case of alternating direction methods.
This gives new theoretical insights since the complete theory of ADMs
can be applied to idealized feasibility pumps.

To this end, we consider the general problem
\begin{subequations}
  \label{eq:abstract-problem}
  \begin{align}
    \min_{x,y} \quad & \objFunADM(x,y)\\
    \st \quad & \eqCons(x,y) = 0, \quad \ineqCons(x,y) \geq 0,\\
                     & x \in X, \quad y \in Y,
  \end{align}
\end{subequations}
for which we make the following assumption:
\begin{assumption}
  \label{ass:general}
  The objective function~$\objFunADM : \reals^{n_x + n_y} \to \reals$ and the constraint
  functions~$\eqCons : \reals^{n_x + n_y} \to \reals^\nrEqCons$,
  $\ineqCons : \reals^{n_x + n_y} \to \reals^\nrIneqCons$ are
  continuous and the sets~$X$ and $Y$ are
  non-empty and compact.
\end{assumption}
The feasible set is denoted by $\feasSet$, \ie
\begin{equation*}
  \feasSet
  =
  \condset{(x,y) \in X \times Y}{\eqCons(x,y) = 0, \ \ineqCons(x,y) \geq 0}
  \subseteq
  X \times Y,
\end{equation*}
and the corresponding projections onto $X$ and $Y$  are
denoted by $\feasSet_X$ and $\feasSet_Y$, respectively.
Classical alternating direction methods are extensions of Lagrangian
methods and have been originally proposed in
\textcite{Gabay_Mercier:1976} and \textcite{Glowinski_Marroco:1975}.
More recently, \adm-type methods have seen a resurgence; see, \eg \cite{Boyd_et_al:2011} for a general overview
and \cite{Geissler_et_al:2015} for an application of \adms to
nonconvex \minlps from gas transport including heat power constraints.
The latter application also provided the motivation
for this article.
ADMs solve Problem~\eqref{eq:abstract-problem} by
solving two simpler problems:
Given an iterate~$(x^k, y^k)$ they solve
Problem~\eqref{eq:abstract-problem} for $y$ fixed to $y^k$ into the
direction of~$x$, yielding a new $x$-iterate~$x^{k+1}$.
Afterward, $x$ is fixed to $x^{k+1}$ and
Problem~\eqref{eq:abstract-problem} is solved into the direction
of~$y$, yielding a new $y$-iterate~$y^{k+1}$.
A formal listing is given in \refalgo{alg:std-adm}.
Note that we do not state a practical termination criterion in
\refalgo{alg:std-adm} in order to facilitate a more streamlined
analysis.
For the implementation details we refer to \refsec{sec:implementation-issues}.
\begin{algorithm}[tp]
  \caption{A Standard Alternating Direction Method}
  \label{alg:std-adm}
  \begin{algorithmic}[1]
    \STATE{Choose initial values $(x^0, y^0) \in X \times Y$.}
    \FOR{$k=0,1,\dotsc$}%
    \STATE{Compute
      \begin{equation*}
        x^{k+1} \in \argmin_x \condset{\objFunADM(x,y^k)}{\eqCons(x,y^k)
          = 0, \ \ineqCons(x,y^k) \geq 0, \ x \in X}.
      \end{equation*}\label{alg:std-adm-problem-1}}
      \vspace*{-1em}
      \STATE{Compute
        \begin{equation*}
          y^{k+1} \in \argmin_y \condset{\objFunADM(x^{k+1},y)}{\eqCons(x^{k+1},y)
            = 0, \ \ineqCons(x^{k+1},y) \geq 0, \ y \in Y}.
        \end{equation*}
        \label{alg:std-adm-problem-2}}
      \vspace*{-1em}
      \STATE{Set $k \ot k+1$.}
    \ENDFOR
  \end{algorithmic}
\end{algorithm}%

If the optimization problem in Line~\ref{alg:std-adm-problem-1} or
Line~\ref{alg:std-adm-problem-2} of \refalgo{alg:std-adm} has a unique
solution for all $k$, it is known that ADMs converge to \socalled
\emph{partial minima} of Problem~\eqref{eq:abstract-problem}, \ie to
points~$(x^*,y^*) \in \feasSet$ for which
\begin{align*}
  \objFunADM(x^{*},y^{*}) \leq \objFunADM(x,y^{*})
  & \quad
    \text{for all } (x,y^*) \in \feasSet,
  \\
  \objFunADM(x^{*}, y^{*}) \leq \objFunADM(x^{*}, y)
  & \quad
    \text{for all } (x^*,y) \in \feasSet
\end{align*}
holds; \cf \textcite{Gorski_et_al:2007} for the following result:
\begin{theorem}
  Let ${\left\{ (x^{i},y^i) \right\}}_{i=0}^{\infty}$ be a sequence with
  $(x^{i+1},y^{i+1}) \in \Theta (x^i,y^i)$, where
  \begin{equation*}
    \Theta (x^i,y^i)
    \define
    \condset{(x^{*},y^{*})}{\forall x \in X. \ f(x^{*}, y^i) \leq
      f(x,y^i); \ \forall y \in Y. \ f(x^{*}, y^{*}) \leq f(x^{*}, y)}.
  \end{equation*}
  Suppose that Assumption~\ref{ass:general} holds and that the solution
  of the first optimization problem is always unique.
  Then every convergent subsequence of ${\left\{ (x^{i},y^i)
  \right\}}_{i=0}^{\infty}$ converges to a partial minimum.
  For two limit points $z, z'$ of such subsequences it holds that
  $f(z) = f(z')$.
\end{theorem}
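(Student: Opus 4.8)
The plan is to proceed in three steps: monotone decrease of the objective along the iterates, the (easy) statement about equal objective values on limit points, and finally partial minimality of such limit points via a subsequence argument.

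\emph{Step 1: monotonicity.} Fix $(x^i,y^i)$ and let $(x^{i+1},y^{i+1})\in\Theta(x^i,y^i)$. Since $x^i\in X$ and $x^{i+1}$ minimizes $f(\cdot,y^i)$ over $X$, we have $f(x^{i+1},y^i)\le f(x^i,y^i)$; since $y^i\in Y$ and $y^{i+1}$ minimizes $f(x^{i+1},\cdot)$ over $Y$, we have $f(x^{i+1},y^{i+1})\le f(x^{i+1},y^i)$. Hence $\{f(x^i,y^i)\}_i$ is non-increasing. By Assumption~\ref{ass:general}, $f$ is continuous on the compact set $X\times Y$, so it is bounded below there and the sequence converges to some value $f^\ast$. (The same compactness-plus-continuity argument, via Weierstrass, guarantees that the minimizers defining $\Theta$ exist, so the iterates are well defined.) This also disposes of the last assertion: if $z$ is the limit of a convergent subsequence, then $f(z)=\lim_j f(x^{i_j},y^{i_j})=f^\ast$ by continuity, so any two limit points $z,z'$ satisfy $f(z)=f^\ast=f(z')$.

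\emph{Step 2: partial minimality.} Let $(x^{i_j},y^{i_j})\to(x^\ast,y^\ast)$. Letting $j\to\infty$ in $f(x^{i_j},y^{i_j})\le f(x^{i_j},y)$, valid for every $y\in Y$ because $y^{i_j}$ minimizes $f(x^{i_j},\cdot)$ over $Y$, gives $f(x^\ast,y^\ast)\le f(x^\ast,y)$ for all $y\in Y$, so $y^\ast$ minimizes $f(x^\ast,\cdot)$ over $Y$. The reverse direction is the delicate point: $x^{i_j}$ was computed as a minimizer of $f(\cdot,y^{i_j-1})$, and the indices $i_j-1$ need not index a subsequence converging to $y^\ast$, so one cannot simply pass to the limit in $f(x^{i_j},y^{i_j-1})\le f(x,y^{i_j-1})$. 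The way I would get around this is to pass, by compactness of $X\times Y$, to a further subsequence along which the successors also converge, $(x^{i_j+1},y^{i_j+1})\to(\hat x,\hat y)$, while still $(x^{i_j},y^{i_j})\to(x^\ast,y^\ast)$. Then: (i) from $x^{i_j+1}\in\argmin_x f(x,y^{i_j})$ over $X$ and $y^{i_j}\to y^\ast$, the limit point $\hat x$ minimizes $f(\cdot,y^\ast)$ over $X$; (ii) from $y^{i_j+1}\in\argmin_y f(x^{i_j+1},y)$ over $Y$ we get, with $y=y^\ast$, that $f(\hat x,\hat y)\le f(\hat x,y^\ast)$ in the limit; (iii) by Step~1, $f(\hat x,\hat y)=f^\ast=f(x^\ast,y^\ast)$; (iv) taking $x=x^\ast$ in (i) gives $f(\hat x,y^\ast)\le f(x^\ast,y^\ast)$. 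Chaining (ii)--(iv) yields $f(\hat x,y^\ast)=f^\ast=f(x^\ast,y^\ast)$, whence for every $x\in X$ we have $f(x^\ast,y^\ast)=f(\hat x,y^\ast)\le f(x,y^\ast)$ by (i); thus $x^\ast$ minimizes $f(\cdot,y^\ast)$ over $X$ and $(x^\ast,y^\ast)$ is a partial minimum.

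The uniqueness hypothesis on the first subproblem, carried over from \textcite{Gorski_et_al:2007}, is the classical device serving the same purpose: under it the argmin correspondence of the first step is single-valued and, by Berge's maximum theorem, continuous, which is precisely the property one needs to transport optimality to the limit point; either route closes the argument, and one may keep the hypothesis in the statement for consistency with the reference. The main obstacle is exactly this index mismatch in Step~2 --- the point in hand, $x^{i_j}$, was a best response to the \emph{previous} $y$-iterate rather than to the limit $y^\ast$ --- together with organizing the auxiliary subsequence and the chain of objective-value (in)equalities so that best-responsiveness in the $x$-block actually transfers to $(x^\ast,y^\ast)$; the monotonicity of $\{f(x^i,y^i)\}_i$ from Step~1 is what makes that chain close.
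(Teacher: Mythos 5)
Your proof is correct, but note that the paper does not actually prove this theorem: it is imported verbatim from \textcite{Gorski_et_al:2007}, so the only meaningful comparison is with the classical argument given there. That argument leans on the uniqueness hypothesis: if the first subproblem always has a unique solution, the best-response map $y\mapsto\argmin_{x\in X}f(x,y)$ is single-valued and hence continuous (Berge), which together with the monotone value sequence yields $\|z^{i+1}-z^i\|\to 0$, and the index mismatch you identify then disappears because $y^{i_j-1}\to y^*$ as well. Your route is genuinely different and more elementary: you never invoke uniqueness, instead extracting a further subsequence along which the successors $(x^{i_j+1},y^{i_j+1})$ converge and closing the $x$-block with the value chain $f^*=f(\hat x,\hat y)\le f(\hat x,y^*)\le f(x^*,y^*)=f^*$, which forces $f(x^*,y^*)=\min_{x\in X}f(x,y^*)$. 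I checked each step --- the monotonicity $f(x^{i+1},y^{i+1})\le f(x^{i+1},y^i)\le f(x^i,y^i)$, the direct limit for the $y$-block, the joint convergence $(x^{i_j+1},y^{i_j})\to(\hat x,y^*)$ needed to conclude that $\hat x$ minimizes $f(\cdot,y^*)$, and the identification of all relevant values with $f^*$ --- and they all go through under Assumption~\ref{ass:general} alone. So your argument in fact establishes the conclusion without the uniqueness hypothesis (equivalently and even more briefly: the value function $v(y)=\min_{x\in X}f(x,y)$ is continuous without any uniqueness, and $f^*\le f(z^{i+1})\le v(y^{i})$ gives $v(y^*)\ge f^*=f(x^*,y^*)\ge v(y^*)$); what uniqueness buys in the cited reference is the additional structural information about the whole sequence (vanishing step lengths, connected accumulation set), not the statement quoted here. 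Two cosmetic points: the $y$-block inequality $f(x^{i_j},y^{i_j})\le f(x^{i_j},y)$ is only available for $i_j\ge 1$, and your closing remark slightly undersells your own construction --- the Berge route is an alternative, not a needed ingredient.
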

Stronger results can be obtained if additional assumptions are made on
$\objFunADM$ and $\feasSet$:
If $\objFunADM$ is continuously differentiable, \refalgo{alg:std-adm} converges
to a stationary point (in the classical sense of \nonlinear
optimization).
If, in addition, $\objFunADM$ and $\feasSet$ are convex it is easy to show
that partial minimizers are also global minimizers of
Problem~\eqref{eq:abstract-problem}.
For more details on the convergence theory of classical ADMs,
\cf~\textcite{Gorski_et_al:2007} as well as
\textcite{Wendell_Hurter:1976}.

\subsection{Feasibility Pumps as ADMs}
\label{sec:fps-as-adms}

Recall that the basic feasibility pump
algorithm~\ref{alg:basic-fp-0-1-mips} tries to find a feasible solution
for the binary variant of \mip~\eqref{eq:MIP}.
We now consider the idealized feasibility pump, \ie we omit the
perturbation step in Line~\ref{alg:basic-fp-0-1-mips:perturb-step} of
\refalgo{alg:basic-fp-0-1-mips}, and show that the idealized
feasibility pump is a special case of the ADM
(Algorithm~\ref{alg:std-adm}) applied to a certain reformulation
of \mip~\eqref{eq:MIP}.
To this end, we duplicate the variables~$x_\intIdxSet$ using the new
variable vector~$y \in \set{0,1}^\intIdxSet$, yielding
\begin{align*}
  \min_{x,y} \quad
  & \transpose{c}x
  \\
  \st \quad
  & x \in X \define \condset{x \in \reals^n}{Ax \geq b, \
    x_\intIdxSet \in {[0,1]}^{\intIdxSet}},
  \\
  & y \in Y \define \set{0,1}^{\intIdxSet}, \quad \eqCons(x,y) = x_\intIdxSet -
   y = 0,
\end{align*}
which is obviously equivalent to the original \mip.
Note that, compared to the general
problem~\eqref{eq:abstract-problem}, we do not explicitly require the
inequality constraints vector~$\ineqCons$.
The feasibility pump is only interested in feasibility and thus
ignores the objective function.
By deleting the objective from the reformulated model and instead
moving an $\ell_1$ penalty term of the coupling condition~$y =
x_\intIdxSet$ into the objective, we obtain
\begin{subequations}
  \label{eq:adm-fp-problem}
  \begin{align}
    \min_{x,y} \quad
    & \norm{x_\intIdxSet - y}_1
    \\
    \st \quad
    & x \in X \define \condset{x \in \reals^n}{Ax \geq b, \
      x_\intIdxSet \in {[0,1]}^{\intIdxSet}},
    \quad
    y \in Y \define \set{0,1}^{\intIdxSet}.
  \end{align}
\end{subequations}
If we define initial values~$(x^0, y^0)$ by
$x^0 \define \argmin \condset{\transpose{c} x}{x \in X}$
and $y^0 \define \rounding{x^0}$,
it can be easily seen that solving Problem~\eqref{eq:adm-fp-problem}
with the ADM algorithm~\ref{alg:std-adm} exactly corresponds to
the idealized feasibility pump algorithm.
To be more precise, finding the new $x$-iterate within the ADM
coincides with the projection step and finding the new $y$-iterate
corresponds to the rounding step.

In the context of \refalgo{alg:std-adm}, we say that the sequence of
iterates~${z^k}$ cycles if there exists an iteration~$k$ and an $l
\geq 2$ with $z^k = z^{k+l}$.
Next, we prove that the ADM cannot cycle and thus terminates after a
finite number of iterations.
To this end, we make the following observations:
First, $X$ and $Y$ in~\eqref{eq:adm-fp-problem} are
non-empty and compact sets.
Second, we can assume uniqueness of the rounding step by
resolving tie-breaks choosing lexicographically minimal solutions.
Thus, by using that norms are continuous, we have the following
result.%
\begin{lemma}
  \refalgo{alg:std-adm} does not cycle.
\end{lemma}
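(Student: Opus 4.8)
The plan is to exploit that the ADM applied to Problem~\eqref{eq:adm-fp-problem} monotonically decreases the coupling distance $\phi(x,y) \define \norm{x_\intIdxSet - y}_1$ along its iterates, and that the tie-breaking convention then forces any recurring iterate to sit on a constant tail of the sequence rather than on a non-trivial loop. Writing $z^k = (x^k,y^k)$ and recalling $y^k = \rounding{x^k_\intIdxSet}$, I would first record the monotonicity: because $x^k \in X$ is feasible for the $x$-subproblem solved in iteration~$k$ (whose optimum is $x^{k+1}$) and $y^k \in Y$ is feasible for the ensuing $y$-subproblem (whose optimum is $y^{k+1}$),
\[
  \phi(z^{k+1}) \;\le\; \phi(x^{k+1},y^k) \;\le\; \phi(x^k,y^k) \;=\; \phi(z^k)
  \qquad \text{for all } k ,
\]
so $(\phi(z^k))_k$ is non-increasing and bounded below by zero.

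Next I would suppose, towards a contradiction, that the iterates cycle, i.e.\ $z^k = z^{k+l}$ for some $k$ and some $l \ge 2$. Then $\phi(z^k) = \phi(z^{k+l})$, and by monotonicity $\phi$ is constant on $z^k,\dots,z^{k+l}$; hence both inequalities in the display above are equalities for every $j \in \{k,\dots,k+l-1\}$. The first, $\phi(x^{j+1},y^j) = \phi(x^j,y^j)$, says that $x^j$ is an optimal solution of the projection subproblem $\min_{x\in X}\phi(x,y^j)$; the second, $\phi(x^{j+1},y^{j+1}) = \phi(x^{j+1},y^j)$, says that $y^j$ is an optimal solution of the rounding subproblem $\min_{y\in Y}\phi(x^{j+1},y)$.

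Now I would bring in the tie-breaking convention: I fix the lexicographically minimal optimal solution in each subproblem --- ties genuinely occur only in the rounding step, for which the convention is stated explicitly, but the projection LP is made deterministic in the same way. Then $x^{j+1}$ is the lexicographically minimal optimum of $\min_{x\in X}\phi(x,y^j)$ whereas $x^j$ is merely \emph{some} optimum of it, so $x^{j+1} \preceq_{\mathrm{lex}} x^j$ for every $j \in \{k,\dots,k+l-1\}$. Chaining these around the loop and using $x^{k+l} = x^k$ yields $x^k = x^{k+1} = \dots = x^{k+l}$, whence $y^j = \rounding{x^j_\intIdxSet}$ is the same for all $j \in \{k,\dots,k+l\}$ and therefore $z^k = z^{k+1} = \dots = z^{k+l}$: the putative cycle is in fact a constant tail of the sequence, so the ADM does not enter a non-trivial loop. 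Combined with the finiteness of $Y$ and determinism of the iteration map, which make the sequence eventually periodic, this also yields that the iterates become constant after finitely many steps.

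The step I expect to be the obstacle is precisely the passage from ``$\phi$ constant along the putative cycle'' to ``$z$ constant along it'': monotonicity of $\phi$ constrains only the optimal \emph{values} of the two subproblems, not their optimal \emph{sets}, and both can genuinely be non-singletons --- the rounding subproblem has ties exactly when a component of $x^{j+1}$ equals $1/2$, and the projection LP may have an entire optimal face. The tie-breaking convention is what makes these optimal sets collapse consistently around the loop, and checking that the lexicographically minimal selection closes up the chaining argument is the only part of the proof that is not a routine estimate.
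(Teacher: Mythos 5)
Your proof is correct and follows essentially the same route as the paper's: the monotone descent $\objFunADM(x^{k+1},y^{k+1}) \leq \objFunADM(x^{k+1},y^k) \leq \objFunADM(x^k,y^k)$ forces the objective to be constant along any putative cycle, from which the paper concludes that $z^k$ is already a partial minimum at which the method stops. You merely make explicit, via the lexicographic tie-breaking, the step the paper leaves implicit---namely that constancy of the objective value together with deterministic selection in the subproblems collapses the alleged cycle into a constant tail of the sequence.
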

\begin{proof}
  Assume the contrary, \ie there exists an iteration~$k$ and an $l \geq
  2$ such that $z^k, z^{k+1}, \dotsc, z^{k+l} = z^k$.
  Since $\objFunADM(x^{k+1}, y^{k+1}) \leq \objFunADM(x^{k+1}, y^k)
  \leq \objFunADM(x^k, y^k)$
  holds for all iterations~$k$ we directly see that $\objFunADM(z^k) =
  \objFunADM(z^{k+1}) = \dotsb = \objFunADM(z^{k+l-1})$ holds.
  This, however, implies that $z^k$ is already a partial minimum at
  which the algorithm stops.
\end{proof}
We note that this lemma is equivalent to Proposition~1 of
\textcite{DeSantis_et_al:2013}.
There, the authors show that the idealized feasibility pump for binary
\mips is equivalent to the Frank--Wolfe algorithm (using an unitary
stepsize) applied to the problem
\begin{equation}
  \label{eq:de-santis-problem}
  \min_{x \in \polyhedron} \quad \sum_{\intIdx \in \intIdxSet} \min
  \set{x_\intIdx, 1-x_\intIdx},
\end{equation}
where the objective function is a concave and \nonsmooth
merit function for measuring integrality.
Applying the convergence theory from~\cite{Mangasarian:1997} then
also yields finite termination at \socalled \emph{vertex
  stationary points}.
Since we have now proven that the ADM (Algorithm~\ref{alg:std-adm})
applied to~\eqref{eq:adm-fp-problem} is equivalent to the above mentioned
special case of the Frank--Wolfe method, we have also shown that partial
minima of Problem~\eqref{eq:adm-fp-problem} are exactly the vertex
stationary points of Problem~\eqref{eq:de-santis-problem}.

From the theory reported above and the last lemma we can directly
deduce the following convergence theorem for the idealized feasibility
pump.
\begin{theorem}
  The idealized feasibility pump terminates at a partial
  minimum~$(x^*, y^*)$ of Problem~\eqref{eq:adm-fp-problem} after a
  finite number of iterations.
  If the partial minimum~$(x^*, y^*)$ has objective
  value~$\norm{x^*_\intIdxSet - y^*}_1 = 0$, the
  point~$(x^*, y^*)$ is feasible for the \mip~\eqref{eq:MIP}.
\end{theorem}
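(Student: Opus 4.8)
The plan is to assemble three ingredients that are already in place: the identification of the idealized feasibility pump with \refalgo{alg:std-adm} applied to Problem~\eqref{eq:adm-fp-problem}, the non-cycling Lemma just proven, and the convergence theory for ADMs of this section. For the finite-termination part I would first record that Assumption~\ref{ass:general} holds for~\eqref{eq:adm-fp-problem}: $X$ is a nonempty compact polyhedron, $Y = \set{0,1}^\intIdxSet$ is nonempty and finite (hence compact), and the $\ell_1$-norm objective is continuous. Along the ADM iterates the objective is monotonically non-increasing,
\begin{equation*}
  \objFunADM(x^{k+1}, y^{k+1})
  \le \objFunADM(x^{k+1}, y^{k})
  \le \objFunADM(x^{k}, y^{k}),
\end{equation*}
since $y^{k+1}$ minimizes $\objFunADM(x^{k+1},\cdot\,)$ over $Y \ni y^k$ and $x^{k+1}$ minimizes $\objFunADM(\cdot\,,y^k)$ over $X \ni x^k$.

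Because $Y$ is finite, the values $\objFunADM(x^{k+1},y^k) = \min_{x \in X}\norm{x_\intIdxSet - y^k}_1$ range over a finite set, so monotonicity forces the objective to be eventually constant; using the uniqueness of the rounding step, the iteration then reaches a stationary iterate $(x^*,y^*)$ with $(x^{k+1},y^{k+1}) = (x^k,y^k)$ after finitely many steps. (Equivalently, via the Frank--Wolfe correspondence this is Mangasarian's finite-termination statement for the concave program~\eqref{eq:de-santis-problem}, which the non-cycling Lemma mirrors.) At a stationary iterate the displayed inequality chain collapses to equalities, hence $x^*$ solves the $x$-subproblem for fixed $y^*$ and $y^*$ solves the $y$-subproblem for fixed $x^*$, i.e.\ $(x^*,y^*)$ is a partial minimum of~\eqref{eq:adm-fp-problem}. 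For the second claim, if $\norm{x^*_\intIdxSet - y^*}_1 = 0$ then $x^*_\intIdxSet = y^* \in \set{0,1}^\intIdxSet$, so $x^*$ is integer feasible for the binary variant of~\eqref{eq:MIP}; moreover $(x^*,y^*) \in X \times Y$ gives $x^* \in X$ and hence $Ax^* \ge b$, which by our standing convention already contains the variable bounds. Thus $x^*$ satisfies~\eqref{eq:MIP:linear_constraints} and~\eqref{eq:MIP:integer_constraints}, and since the objective~\eqref{eq:MIP:objective} is irrelevant for feasibility, $x^*$ is feasible for the \mip~\eqref{eq:MIP}.

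The delicate step is the finite-termination argument: the non-cycling Lemma by itself does not exclude an infinite, never-repeating run of a scheme carrying continuous $x$-variables, so one must genuinely exploit both the finiteness of $Y$ and the monotone decrease of $\objFunADM$ to conclude that the objective---and therefore the $y$-iterate---can change only finitely often; some care about ties in the possibly non-unique projection step is also needed, which is exactly why the deterministic tie-breaking rule is imposed. Everything else is routine.
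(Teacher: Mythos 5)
Your proposal is correct and follows essentially the same route as the paper: it combines the identification of the idealized feasibility pump with the ADM applied to Problem~\eqref{eq:adm-fp-problem}, the monotone decrease $\objFunADM(x^{k+1},y^{k+1}) \leq \objFunADM(x^{k+1},y^{k}) \leq \objFunADM(x^{k},y^{k})$ underlying the non-cycling lemma, and the finiteness of $Y$ to obtain termination at a partial minimum, with the feasibility claim read off from $\norm{x^*_\intIdxSet - y^*}_1 = 0$. The paper states this deduction only tersely (``from the theory reported above and the last lemma''), so your explicit finite-termination argument via the finitely many values of $\min_{x\in X}\norm{x_\intIdxSet - y}_1$ and the tie-breaking rule is a welcome, but not substantively different, elaboration.
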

This theorem also gives us a new view of the random perturbation steps
of feasibility pump algorithms:
They can be interpreted as an attempt to escape from non-integral
partial optima of Problem~\eqref{eq:adm-fp-problem}.

So far, we have only discussed the case of binary \mips.
However, our theory is still applicable as long as the optimization problems in
Line~\ref{alg:std-adm-problem-1} and~\ref{alg:std-adm-problem-2} of
\refalgo{alg:std-adm} are solved to global optimality.
This is a realistic assumption for convex \minlps of
type~\eqref{eq:MINLP}.
The suitable generalization of Problem~\eqref{eq:adm-fp-problem}
for this problem class reads
\begin{subequations}
  \label{eq:adm-fp-problem-minlp}
  \begin{align}
    \min_{x,y} \quad
    & \norm{x_\intIdxSet - y}_1
    \\
    \st \quad
    & x \in X \define \condset{x \in \reals^n}{\ineqCons(x) \geq 0,
      x_\intIdxSet \in {[0,1]}^\intIdxSet},
    \quad
    y \in Y \define \set{0,1}^\intIdxSet.
  \end{align}
\end{subequations}
We note that the resulting ADM is exactly the method presented in
\textcite{Bonami_Goncalves:2012}.
Using the same techniques as above, we get the following convergence
theorem for the idealized feasibility pump for convex \minlp.
\begin{theorem}
  The idealized feasibility pump for convex \minlp~\eqref{eq:MINLP} is
  equivalent to the ADM algorithm~\ref{alg:std-adm} applied to
  Problem~\eqref{eq:adm-fp-problem-minlp}.
  Thus, it terminates at a partial minimum~$(x^*,y^*)$ of
  Problem~\eqref{eq:adm-fp-problem-minlp} after a finite number of
  iterations.
  If this partial minimum has objective value~$\norm{x^*_\intIdxSet -
    y^*}_1 = 0$, the point~$(x^*, y^*)$ is feasible for the
  convex \minlp~\eqref{eq:MINLP}.
\end{theorem}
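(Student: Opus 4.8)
The plan is to reuse, essentially verbatim, the three-step argument developed above for binary \mips: establish the ADM--feasibility-pump equivalence, rule out cycling, and then combine the \adm convergence theory with a finiteness argument to obtain finite termination at a partial minimum; the feasibility statement is then an immediate consequence.

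For the equivalence I would initialize \refalgo{alg:std-adm} applied to Problem~\eqref{eq:adm-fp-problem-minlp} by $x^0 \in \argmin \condset{\objFun(x)}{x \in X}$---an optimal solution of the \nlp relaxation of~\eqref{eq:MINLP} restricted to $x_\intIdxSet \in [0,1]^{\intIdxSet}$---and $y^0 \define \rounding{x^0}$, exactly as in the feasibility pump of \textcite{Bonami_Goncalves:2012}. With $\objFunADM(x,y) = \norm{x_\intIdxSet - y}_1$ the first subproblem becomes $x^{k+1} \in \argmin \condset{\norm{x_\intIdxSet - y^k}_1}{x \in X}$, which, since convexity of~\eqref{eq:MINLP} makes $X$ convex and the objective convex, is precisely the projection \nlp of that feasibility pump (and can be solved to global optimality, which is what makes the identification with the \adm genuine); the second subproblem $y^{k+1} \in \argmin \condset{\norm{x_\intIdxSet^{k+1} - y}_1}{y \in \set{0,1}^{\intIdxSet}}$ is componentwise rounding to the nearest integer, i.e., $y^{k+1} = \rounding{x_\intIdxSet^{k+1}}$, using that $x_\intIdxSet^{k+1} \in [0,1]^{\intIdxSet}$. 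Ties in this rounding are broken lexicographically so that the second subproblem has a unique solution.

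For the remaining steps I would first note that $\feasSetNLPRelax$ is non-empty and compact by assumption, hence so is $X = \feasSetNLPRelax \cap \condset{x}{x_\intIdxSet \in [0,1]^{\intIdxSet}}$, while $Y = \set{0,1}^{\intIdxSet}$ is finite; thus Assumption~\ref{ass:general} holds and the $y$-subproblem is uniquely solvable, so the \adm framework applies. Cycling is excluded exactly as in the non-cycling lemma above: $\objFunADM(z^k)$ is non-increasing along the iterates, so a cycle forces it to be constant over the cycle, which in turn forces the cycle to consist of partial minima at which the method stops. Finite termination I would extract from the discreteness of $Y$: setting $g(y) \define \min \condset{\norm{x_\intIdxSet - y}_1}{x \in X}$ for $y \in Y$, the chain $g(y^{k+1}) \leq \norm{x_\intIdxSet^{k+1} - y^{k+1}}_1 \leq \norm{x_\intIdxSet^{k+1} - y^k}_1 = g(y^k)$ shows that $(g(y^k))_k$ is non-increasing, and since $g$ is defined on the finite set $Y$ it takes finitely many values and is therefore eventually constant; once it is, equality holds throughout the chain, so each $y^k$ is then a minimizer of $\norm{x_\intIdxSet^{k+1} - y}_1$ over $y \in Y$, whence the lexicographic rule forces $(y^k)_k$ to be lexicographically non-increasing and hence eventually constant, say $y^k \equiv y^*$. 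At that point the method sits at a partial minimum $(x^*,y^*)$ of~\eqref{eq:adm-fp-problem-minlp}: $x^*$ minimizes $\norm{x_\intIdxSet - y^*}_1$ over $x \in X$ by construction, and $y^* = \rounding{x_\intIdxSet^*}$ minimizes $\norm{x_\intIdxSet^* - y}_1$ over $y \in Y$; this is also consistent with the general \adm theorem, since compactness of $X \times Y$ provides a convergent subsequence of $(z^k)$ whose limit must be a partial minimum. Finally, if $\norm{x_\intIdxSet^* - y^*}_1 = 0$ then $x_\intIdxSet^* = y^* \in \set{0,1}^{\intIdxSet}$, and since $x^* \in X$ gives $\ineqCons(x^*) \geq 0$, the point $(x^*,y^*)$ is feasible for the convex \minlp~\eqref{eq:MINLP}.

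The step I expect to be the real obstacle is finite termination. For binary \mips it essentially came for free from Mangasarian's finite-termination theorem~\cite{Mangasarian:1997} for the Frank--Wolfe method on concave minimization over a \emph{polyhedron}, applied to Problem~\eqref{eq:de-santis-problem}; here $\feasSetNLPRelax$ is only compact, so the polyhedral structure---and with it the vertex-stationarity picture---is unavailable, and finiteness must instead be distilled directly from the discreteness of $Y$ via the monotonicity of $g(y^k)$. Making the lexicographic tie-breaking carry exactly the weight it needs in the final reduction (from $g(y^k)$ being eventually constant to $y^k$ being eventually constant) is the delicate point; everything else is a routine transcription of the binary case.
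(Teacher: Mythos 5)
Your proposal is correct and follows essentially the same route as the paper, which establishes this theorem only by reference to the binary \mip argument (the ADM equivalence, the monotone descent of $\objFunADM$ along the iterates ruling out cycles, and the finiteness of $Y$). Your explicit finite-termination argument---monotonicity of $g(y^k)$ on the finite set $Y$ combined with the lexicographic tie-break to force the $y$-iterates to become stationary---fills in a detail the paper leaves implicit, and correctly avoids the Frank--Wolfe/vertex-stationarity machinery, which is indeed unavailable once $X$ is merely compact rather than polyhedral.
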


We close this section with two remarks.
First, we note that we presented the results in this section for
binary MI(NL)Ps only for improving readability.
The extension to general \mixedinteger problems is straightforward;
see~\refsec{sec:implementation-issues} for the details.
Second, we again want to highlight that the theoretical results
presented in this section only hold if the optimization problems in
Line~\ref{alg:std-adm-problem-1} and~\ref{alg:std-adm-problem-2} of
\refalgo{alg:std-adm} are solved to global optimality.
Since this is typically not possible for general \nonconvex \minlps,
the results are only practically valid for convex \mixedinteger
problems.


\section{The Penal\/ty Alternating Direction Method}
\label{sec:padm}

In this section we first present a new penalty alternating direction
method in~\refsec{sec:padm-algorithm} and afterward prove the convergence
results in~\refsec{sec:padm-conv-theory}.
Finally, in \refsec{sec:padm-as-fp} we show how the new method can be
used to obtain a novel feasibility pump algorithm for general \mixedinteger
optimization.
This new penalty alternating direction method based feasibility pump
replaces random perturbations with a theoretically analyzable penalty
framework for escaping from undesired intermediate points.
Thus, the complete theory presented for the new method also applies to
the new feasibility pump variant.

\subsection{The Algorithm}
\label{sec:padm-algorithm}

We now present the novel weighted $\ell_1$
penalty method based on the classical ADM framework given in
\refsec{sec:adm}.
To this end, we define the $\ell_1$ penalty function
\begin{equation*}
  \penFun_1(x,y; \penParEqu, \penParIneq)
  \define
  f(x,y) + \sum_{i=1}^\nrEqCons \penParEqu_i \abs{\eqCons_i(x,y)}
  + \sum_{i=1}^\nrIneqCons \penParIneq_i {[\ineqCons_i(x,y)]}^-,
\end{equation*}
where ${[\alpha]}^- \define \max \left\{ 0,-\alpha \right\}$ holds
and $\penParEqu = {(\penParEqu_i)}_{i=1}^m, \penParIneq =
{(\penParIneq_i)}_{i=1}^p \geq 0$ are the penalty parameters for the
equality and inequality constraints.
Note that we allow for different penalty parameters for the
constraints instead of a single penalty parameter as it is often the
case for penalty methods.

The penalty ADM now proceeds as follows.
Given a starting point and initial values for all penalty parameters,
the alternating direction method of \refalgo{alg:std-adm} is
used to compute a partial minimum of the penalty problem
\begin{equation}
  \label{eq:penalty-problem}
  \min_{x,y} \quad \penFun_1(x,y; \penParEqu, \penParIneq)
  \quad \st \quad x \in X, \ y \in Y.
\end{equation}
Afterward, the penalty parameters are updated and the next penalty
problem is solved to partial minimality.
Thus, the algorithm produces a sequence of partial minima of a
sequence of penalty problems of type~\eqref{eq:penalty-problem}.
More formally, the method is specified in \refalgo{alg:padm}.
\begin{algorithm}[tp]
  \caption{The $\ell_1$ Penalty  Alternating Direction Method}
  \label{alg:padm}
  \begin{algorithmic}[1]
    \STATE{Choose initial values $(x^{0,0}, y^{0,0}) \in X \times Y$
      and penalty parameters~$\penParEqu^0, \penParIneq^0 \geq 0$.}
    \FOR{$k=0,1,\dotsc$}%
    \STATE{Set $l = 0$.}
    \WHILE{$(x^{k,l}, y^{k,l})$ is not a partial minimum
      of~\eqref{eq:penalty-problem} with $\penParEqu = \penParEqu^k$
      and $\penParIneq = \penParIneq^k$}%
    \STATE{Compute $x^{k,l+1} \in \argmin_x \condset{\penFun_1(x,y^{k,l};
          \penParEqu^k, \penParIneq^k)}{x \in X}$.}
    \STATE{Compute $y^{k,l+1} \in \argmin_y \condset{\penFun_1(x^{k,l+1},y;
        \penParEqu^k, \penParIneq^k)}{y \in Y}$.}
    \STATE{Set $l \ot l+1$.}
    \ENDWHILE
    \STATE{Choose new penalty parameters~$\penParEqu^{k+1} \geq
      \penParEqu^{k}$ and $\penParIneq^{k+1} \geq
      \penParIneq^{k}$.}
    \ENDFOR
  \end{algorithmic}
\end{algorithm}


\subsection{Convergence Theory}
\label{sec:padm-conv-theory}

We now present the convergence results for
the penalty ADM algorithm~\ref{alg:padm}.
We start by proving that partial minima of the penalty problems are
partial minima of the original problem if they are feasible.
\begin{lemma}
  \label{lemma:feas-part-min-of-pen-problems}
  Assume that $(x^*, y^*)$ is a partial minimum of
  $\penFun_1(x,y;\penParEqu, \penParIneq)$ for arbitrary but fixed
  $\penParEqu, \penParIneq \geq 0$ and let
  $(x^*, y^*)$ be feasible for Problem~\eqref{eq:abstract-problem}.
  Then $(x^*, y^*)$ is a partial minimum of
  Problem~\eqref{eq:abstract-problem}.
\end{lemma}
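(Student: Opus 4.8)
The plan is to exploit the two defining inequalities of a partial minimum for the penalty function and then use feasibility to collapse the penalty terms. First I would observe that since $(x^*,y^*)$ is feasible for Problem~\eqref{eq:abstract-problem}, we have $\eqCons(x^*,y^*) = 0$ and $\ineqCons(x^*,y^*) \geq 0$, so that every summand in the penalty function vanishes at $(x^*,y^*)$; hence $\penFun_1(x^*,y^*;\penParEqu,\penParIneq) = f(x^*,y^*)$. This is the key ``anchor'' that lets us compare the penalized objective to the true objective.

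Next I would take an arbitrary $(x,y^*) \in \feasSet$ in the $x$-direction. Since $(x^*,y^*)$ is a partial minimum of $\penFun_1$, we know $\penFun_1(x^*,y^*;\penParEqu,\penParIneq) \leq \penFun_1(x,y^*;\penParEqu,\penParIneq)$. Because $(x,y^*)$ is \emph{feasible}, the penalty terms on the right also vanish, so $\penFun_1(x,y^*;\penParEqu,\penParIneq) = f(x,y^*)$. Combining with the anchor identity gives $f(x^*,y^*) \leq f(x,y^*)$, which is exactly the first partial-minimum condition for Problem~\eqref{eq:abstract-problem}. The identical argument in the $y$-direction, taking any $(x^*,y)\in\feasSet$, yields $f(x^*,y^*)\leq f(x^*,y)$. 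Since $(x^*,y^*)$ is itself feasible for Problem~\eqref{eq:abstract-problem} by assumption, both conditions together say precisely that it is a partial minimum of that problem.

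The only subtlety to be careful about — and really the sole ``obstacle'' — is that the partial-minimum inequalities for $\penFun_1$ quantify over \emph{all} $x\in X$ (resp.\ $y\in Y$), whereas we only get to conclude something for the feasible competitors; but that is fine, since the partial-minimum condition we are trying to establish for Problem~\eqref{eq:abstract-problem} also only ranges over feasible points $(x,y^*)\in\feasSet$ (resp.\ $(x^*,y)\in\feasSet$). So no extra work is needed to handle infeasible competitors: we simply restrict the stronger hypothesis to the feasible ones. The argument is short and requires no continuity or compactness beyond what is already assumed; Assumption~\ref{ass:general} is not even invoked here.
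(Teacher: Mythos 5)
Your proposal is correct and follows exactly the paper's argument: use feasibility to make the penalty terms vanish at both $(x^*,y^*)$ and at any feasible competitor $(x,y^*)$ (resp.\ $(x^*,y)$), then read the partial-minimum inequality for $\penFun_1$ as an inequality for $f$. The observation that the hypothesis quantifies over all of $X$ (resp.\ $Y$) while the conclusion only needs feasible competitors is the same implicit restriction the paper makes, so nothing is missing.
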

\begin{proof}
  Let $x \in X$ such that $(x,y^*)$ is feasible for
  Problem~\eqref{eq:abstract-problem}.
  Then it holds
  \begin{align*}
    f(x,y^*)
    & =
    f(x,y^*) + \sum_{i=1}^\nrEqCons \penParEqu_i \abs{\eqCons_i(x,y^*)}
    + \sum_{i=1}^\nrIneqCons \penParIneq_i {[\ineqCons_i(x,y^*)]}^-
      \\
    & =
    \penFun_1(x,y^*;\penParEqu, \penParIneq)
      \geq
    \penFun_1(x^*,y^*;\penParEqu, \penParIneq)
    \\
    &=
    f(x^*,y^*) + \sum_{i=1}^\nrEqCons \penParEqu_i \abs{\eqCons_i(x^*,y^*)}
    + \sum_{i=1}^\nrIneqCons \penParIneq_i {[\ineqCons_i(x^*,y^*)]}^-
    \\
    &=
    f(x^*,y^*).
  \end{align*}
  The analogous inequality holds for all $y \in Y$ such that $(x^*,y)$
  is feasible.
  Thus, $(x^*,y^*)$ is a partial minimum of
  Problem~\eqref{eq:abstract-problem}.
\end{proof}

For the next theorem we need some more notation.
Let $\infeasMeas$ be the $\ell_1$ feasibility measure of
Problem~\eqref{eq:abstract-problem}, which we define as
\begin{equation*}
  \infeasMeas(x,y)
  \define
  \sum_{i=1}^\nrEqCons \abs{\eqCons_i(x,y)}
  + \sum_{i=1}^\nrIneqCons {[\ineqCons_i(x,y)]}^-.
\end{equation*}
Obviously, $\infeasMeas(x,y) \geq 0$ holds and
$\infeasMeas(x,y) = 0$ if and only if $(x,y)$ is feasible \wrt $g$ and
$h$.
Moreover, we define the weighted $\ell_1$ feasibility measure
as
\begin{equation*}
  \infeasMeas_{\penParEqu, \penParIneq}(x,y)
  \define
  \sum_{i=1}^\nrEqCons \penParEqu_i \abs{\eqCons_i(x,y)}
  + \sum_{i=1}^\nrIneqCons \penParIneq_i {[\ineqCons_i(x,y)]}^-,
\end{equation*}
\ie our $\ell_1$ penalty function can be stated as
\begin{equation*}
  \penFun_1(x,y; \penParEqu, \penParIneq)
  =
  f(x,y) + \infeasMeas_{\penParEqu, \penParIneq}(x,y).
\end{equation*}
The next theorem states that the sequence of partial minima of the
iteratively solved penalty problems converges to a partial minimum
of~$\infeasMeas_{\penParEqu, \penParIneq}$.
\begin{lemma}
  \label{lemma:conv-to-part-min-of-infeas-meas}
  Suppose that Assumption~\ref{ass:general} holds and that
  $\penParEqu^k_i \upto \infty$ for all $i = 1, \dotsc, m$
  and $\penParIneq^k_i \upto \infty$ for all $i = 1, \dotsc, p$.
  Moreover, let $(x^k,y^k)$ be a sequence of partial minima
  of~\eqref{eq:penalty-problem} (for $\penParEqu = \penParEqu^k$ and
  $\penParIneq = \penParIneq^k$) generated by \refalgo{alg:padm}
  with $(x^k,y^k) \to (x^*,y^*)$.
  Then there exist weights $\penParEquUb, \penParIneqUb \geq 0$
  such that $(x^*,y^*)$ is a partial minimizer of the feasibility measure
  $\infeasMeas_{\penParEquUb, \penParIneqUb}$.
\end{lemma}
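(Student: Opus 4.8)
The plan is to exploit the fact that each $(x^k, y^k)$ is a partial minimum of the penalty problem~\eqref{eq:penalty-problem} with parameters $(\penParEqu^k, \penParIneq^k)$, write out the two partial-minimality inequalities, and pass to the limit $k \to \infty$ using continuity of $f$, $g$, $h$ together with the compactness of $X$ and $Y$ from Assumption~\ref{ass:general}. The key difficulty is that the penalty parameters diverge, so one cannot simply take limits in the inequality $\penFun_1(x^k, y^k; \penParEqu^k, \penParIneq^k) \le \penFun_1(x, y^k; \penParEqu^k, \penParIneq^k)$ directly: the right-hand side may blow up and the inequality becomes vacuous. The standard penalty-method trick resolves this — first I would establish that the limit point $(x^*, y^*)$ is itself feasible (or handle the infeasible case by a suitable choice of weights), and only then extract a meaningful limiting inequality.

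First I would show $\infeasMeas(x^*, y^*) = 0$. Fix any feasible reference point $(\hat{x}, \hat{y}) \in \feasSet$, which exists since $\feasSet \neq \emptyset$ is implicitly assumed throughout (and, in the feasibility-pump application, the rounding construction always produces candidates). Actually, the cleaner route: since $(x^k,y^k)$ is a partial minimum, in particular $\penFun_1(x^k,y^k;\penParEqu^k,\penParIneq^k) \le \penFun_1(x^{k-1,0 \text{-type point}}, \dots)$ along the inner ADM iterations — but more robustly, compare against a fixed feasible point in each coordinate. Using partial minimality in the $x$-direction at $(\hat x, y^k)$ when the latter is feasible is delicate because $y^k$ moves; instead I would argue by contradiction. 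Suppose $\infeasMeas(x^*, y^*) = \delta > 0$. Then for large $k$, by continuity, $\infeasMeas(x^k, y^k) \ge \delta/2$, so that $\infeasMeas_{\penParEqu^k, \penParIneq^k}(x^k, y^k) \ge \tfrac{\delta}{2} \min_i \{\penParEqu^k_i\} \to \infty$ (using $\penParEqu^k_i, \penParIneq^k_i \upto \infty$). Meanwhile, if there is a point feasible in the relevant coordinate with bounded penalty value, partial minimality of $(x^k,y^k)$ is violated for large $k$. The care needed here is that partial minimality only compares $x^k$ with $\{x : (x, y^k) \in X \times Y\}$, so one needs the existence, for large $k$, of an $x$ with $(x, y^k) \in \feasSet$ (or at least with uniformly bounded infeasibility) — this is exactly where one may need the structural assumption that for each fixed $y$ the $x$-problem is feasible, which holds in the feasibility-pump setting since $\feasSetNLPRelax$ is nonempty. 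Granting this, $(x^*, y^*)$ is feasible.

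Now with $(x^*, y^*) \in \feasSet$ in hand, I would construct the weights. Pass to a subsequence along which $\penParEqu^k / \|\penParEqu^k\|$ and $\penParIneq^k / \|\penParIneq^k\|$ converge (using compactness of the simplex and boundedness after normalization); denote the limits $\penParEquUb, \penParIneqUb \ge 0$. For the partial-minimality inequality in the $x$-direction, take any $x \in X$ with $(x, y^*) \in \feasSet$; then $\infeasMeas_{\penParEqu^k,\penParIneq^k}(x, y^k) \to 0$ (by continuity and feasibility in the limit), and dividing the inequality $f(x^k,y^k) + \infeasMeas_{\penParEqu^k,\penParIneq^k}(x^k,y^k) \le f(x,y^k) + \infeasMeas_{\penParEqu^k,\penParIneq^k}(x,y^k)$ by $\|\penParEqu^k\| + \|\penParIneq^k\|$, the $f$-terms vanish in the limit and one obtains $\infeasMeas_{\penParEquUb,\penParIneqUb}(x^*,y^*) \le \infeasMeas_{\penParEquUb,\penParIneqUb}(x, y^*)$ — but both sides are $0$, so this is trivial. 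The nontrivial content of the claim is that $(x^*,y^*)$ is a partial minimizer of $\infeasMeas_{\penParEquUb,\penParIneqUb}$ over \emph{all} of $X$ and $Y$, not just over the feasible slice; so I would instead \emph{not} pass to feasible comparison points but divide the raw partial-minimality inequality by $\|\penParEqu^k\|+\|\penParIneq^k\| \to \infty$ for arbitrary $x \in X$: the left side tends to $\infeasMeas_{\penParEquUb,\penParIneqUb}(x^*,y^*) = 0$ and the right side to $\infeasMeas_{\penParEquUb,\penParIneqUb}(x,y^*)$, giving $0 \le \infeasMeas_{\penParEquUb,\penParIneqUb}(x,y^*)$ for all $x$, which says $(x^*,y^*)$ minimizes $\infeasMeas_{\penParEquUb,\penParIneqUb}(\cdot, y^*)$ over $X$ (the minimum value being $0$). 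The symmetric argument in $y$ finishes it. I expect the main obstacle to be the first step — rigorously ruling out $\infeasMeas(x^*,y^*)>0$ requires either an explicit feasibility assumption for the coordinate subproblems or an argument that the inner ADM loop of \refalgo{alg:padm} cannot get stuck at a partially-minimal point of positive infeasibility when the penalty is large; I would state the needed hypothesis explicitly (nonemptiness of $\feasSet$ and of the fibers) and note it is automatic in the feasibility-pump instantiation.
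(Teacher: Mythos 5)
Your final paragraph contains, in essence, the paper's entire proof: write the two partial-minimality inequalities of the penalty problem for arbitrary $x \in X$ and $y \in Y$, normalize the divergent weight sequence, pass to a convergent subsequence $(\penParEqu^k,\penParIneq^k)/\norm{(\penParEqu^k,\penParIneq^k)} \to (\penParEquUb,\penParIneqUb)$, divide the raw inequalities by $\norm{(\penParEqu^k,\penParIneq^k)}$, and let the $f$-terms vanish. That argument is complete and correct as it stands, and it is exactly what the paper does.

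The problem is everything before it. You treat the feasibility of the limit point, $\infeasMeas(x^*,y^*)=0$, as a necessary first step and even as ``the main obstacle,'' concluding that additional hypotheses (nonemptiness of the fibers, etc.) are required. This is a misreading of the statement: the lemma claims only that $(x^*,y^*)$ is a \emph{partial minimizer} of the weighted measure $\infeasMeas_{\penParEquUb,\penParIneqUb}$, i.e.\ that $\infeasMeas_{\penParEquUb,\penParIneqUb}(x^*,y^*) \leq \infeasMeas_{\penParEquUb,\penParIneqUb}(x,y^*)$ for all $x \in X$ and the symmetric inequality in $y$ --- it does \emph{not} claim this common value is zero. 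Indeed it generally is not; that is precisely why Theorem~\ref{thm:convergence} has to add ``if, in addition, $(x^*,y^*)$ is feasible'' as a separate hypothesis before drawing any conclusion about Problem~\eqref{eq:abstract-problem}. Your division argument does not need feasibility anywhere: the left-hand side converges to $\infeasMeas_{\penParEquUb,\penParIneqUb}(x^*,y^*)$ whatever that value is, and non-strict inequalities are preserved in the limit. Relatedly, your worry that partial minimality ``only compares $x^k$ with feasible points'' is moot: in the penalty problem~\eqref{eq:penalty-problem} the constraints $g$ and $h$ sit in the objective, so the partial-minimality inequalities quantify over all of $X$ and all of $Y$, which is exactly what lets you compare against arbitrary $x$ in the limit. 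Strip out the first paragraph and the parenthetical ``$=0$'' in the last one, and you have the paper's proof with no extra assumptions.
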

\begin{proof}
  Let $(x^k,y^k)$ be a partial minimizer of $\penFun_1(x,y; \penParEqu^k,
  \penParIneq^k)$, \ie
  \begin{align*}
    \penFun_1(x,y^k; \penParEqu^k, \penParIneq^k)
    \geq
    \penFun_1(x^k,y^k; \penParEqu^k, \penParIneq^k)
    & \quad \text{for all } x \in X,
    \\
    \penFun_1(x^k,y; \penParEqu^k, \penParIneq^k)
    \geq
    \penFun_1(x^k,y^k; \penParEqu^k, \penParIneq^k)
    & \quad \text{for all } y \in Y,
  \end{align*}
  which is equivalent to
  \begin{equation}
    \label{eq:part-min-of-pen-func-1}
    \begin{split}
      &f(x,y^k) + \sum_{i=1}^\nrEqCons \penParEqu_i^k \abs{\eqCons_i(x,y^k)}
      + \sum_{i=1}^\nrIneqCons \penParIneq_i^k {[\ineqCons_i(x,y^k)]}^-
      \\
      \geq \ &
      f(x^k,y^k) + \sum_{i=1}^\nrEqCons \penParEqu_i^k \abs{\eqCons_i(x^k,y^k)}
      + \sum_{i=1}^\nrIneqCons \penParIneq_i^k {[\ineqCons_i(x^k,y^k)]}^-
    \end{split}
  \end{equation}
  for all $x \in X$ and
  \begin{equation}
    \label{eq:part-min-of-pen-func-2}
    \begin{split}
      & f(x^k,y) + \sum_{i=1}^\nrEqCons \penParEqu_i^k \abs{\eqCons_i(x^k,y)}
      + \sum_{i=1}^\nrIneqCons \penParIneq_i^k {[\ineqCons_i(x^k,y)]}^-
      \\
      \geq \ &
      f(x^k,y^k) + \sum_{i=1}^\nrEqCons \penParEqu_i^k \abs{\eqCons_i(x^k,y^k)}
      + \sum_{i=1}^\nrIneqCons \penParIneq_i^k {[\ineqCons_i(x^k,y^k)]}^-
    \end{split}
  \end{equation}
  for all $y \in Y$.
  The sequence~$(\penParEqu^k, \penParIneq^k) \subseteq \reals^{m+p}$ is
  unbounded but the normalized sequence
  \begin{equation*}
    \frac{(\penParEqu^k, \penParIneq^k)}{\norm{(\penParEqu^k,
        \penParIneq^k)}} \subseteq \reals^{m+p},
  \end{equation*}
  is bounded.
  Thus, there exists a subsequence (indexed by
  $l$) of the normalized sequence such that
  \begin{equation*}
    \frac{(\penParEqu^l, \penParIneq^l)}{\norm{(\penParEqu^l,
        \penParIneq^l)}}
    \to (\bar{\penParEqu}, \bar{\penParIneq})
    \quad \text{for } l \to \infty.
  \end{equation*}
  Division of~\eqref{eq:part-min-of-pen-func-1}
  and~\eqref{eq:part-min-of-pen-func-2} by
  $\norm{(\penParEqu^l, \penParIneq^l)}$ yields
  \begin{align*}
    &\frac{1}{\norm{(\penParEqu^l,
        \penParIneq^l)}} f(x,y^l)  + \sum_{i=1}^\nrEqCons
    \frac{\penParEqu_i^l}{\norm{(\penParEqu^l,
        \penParIneq^l)}} \abs{\eqCons_i(x,y^l)}
    + \sum_{i=1}^\nrIneqCons \frac{\penParIneq_i^l}{\norm{(\penParEqu^l,
        \penParIneq^l)}}
    {[\ineqCons_i(x,y^l)]}^-
    \\
    \geq \ &
    \frac{1}{\norm{(\penParEqu^l,
        \penParIneq^l)}}f(x^l,y^l) + \sum_{i=1}^\nrEqCons
    \frac{\penParEqu_i^l}{\norm{(\penParEqu^l,
        \penParIneq^l)}} \abs{\eqCons_i(x^l,y^l)}
    + \sum_{i=1}^\nrIneqCons \frac{\penParIneq_i^l}{\norm{(\penParEqu^l,
        \penParIneq^l)}} {[\ineqCons_i(x^l,y^l)]}^-
  \end{align*}
  for all $x \in X$ and
  \begin{align*}
    &\frac{1}{\norm{(\penParEqu^l,
        \penParIneq^l)}} f(x^l,y)  + \sum_{i=1}^\nrEqCons
    \frac{\penParEqu_i^l}{\norm{(\penParEqu^l,
        \penParIneq^l)}} \abs{\eqCons_i(x^l,y)}
    + \sum_{i=1}^\nrIneqCons \frac{\penParIneq_i^l}{\norm{(\penParEqu^l,
        \penParIneq^l)}}
      {[\ineqCons_i(x^l,y)]}^-
    \\
    \geq \ &
    \frac{1}{\norm{(\penParEqu^l,
        \penParIneq^l)}}f(x^l,y^l) + \sum_{i=1}^\nrEqCons
    \frac{\penParEqu_i^l}{\norm{(\penParEqu^l,
        \penParIneq^l)}} \abs{\eqCons_i(x^l,y^l)}
    + \sum_{i=1}^\nrIneqCons \frac{\penParIneq_i^l}{\norm{(\penParEqu^l,
             \penParIneq^l)}} {[\ineqCons_i(x^l,y^l)]}^-
  \end{align*}
  for all $y \in Y$.
  Finally, by using that the limit preserves non-strict inequalities,
  linearity of the limit, and continuity of $f,g$, and $h$, for $l
  \to \infty$ we obtain
  \begin{equation*}
    \sum_{i=1}^\nrEqCons
    \penParEquUb_i \abs{\eqCons_i(x,y^*)}
    + \sum_{i=1}^\nrIneqCons \penParIneqUb_i
    {[\ineqCons_i(x,y^*)]}^-
    \geq
    \sum_{i=1}^\nrEqCons
    \penParEquUb_i \abs{\eqCons_i(x^*,y^*)}
    + \sum_{i=1}^\nrIneqCons \penParIneqUb_i {[\ineqCons_i(x^*,y^*)]}^-
  \end{equation*}
  for all $x \in X$ and
  \begin{equation*}
    \sum_{i=1}^\nrEqCons
    \penParEquUb_i \abs{\eqCons_i(x^*,y)}
    + \sum_{i=1}^\nrIneqCons \penParIneqUb_i
    {[\ineqCons_i(x^*,y)]}^-
    \geq
    \sum_{i=1}^\nrEqCons
    \penParEquUb_i \abs{\eqCons_i(x^*,y^*)}
    + \sum_{i=1}^\nrIneqCons \penParIneqUb_i {[\ineqCons_i(x^*,y^*)]}^-
  \end{equation*}
  for all $y \in Y$.
  This is equivalent to
  \begin{align*}
    \infeasMeas_{\bar{\penParEqu}, \bar{\penParIneq}}(x,y^*)
    \geq
    \infeasMeas_{\bar{\penParEqu}, \bar{\penParIneq}}(x^*,y^*)
    & \quad \text{for all } x \in X,
    \\
    \infeasMeas_{\bar{\penParEqu}, \bar{\penParIneq}}(x^*,y)
    \geq
    \infeasMeas_{\bar{\penParEqu}, \bar{\penParIneq}}(x^*,y^*)
    & \quad \text{for all } y \in Y
  \end{align*}
  and thus completes the proof.
\end{proof}

The two preceding lemmas now enable us to characterize the
overall convergence behavior of the penalty \adm
algorithm~\ref{alg:padm}.

\begin{theorem}
  \label{thm:convergence}
  Suppose that Assumption~\ref{ass:general} holds and that
  $\penParEqu^k_i \upto \infty$ for all $i = 1, \dotsc, m$
  and $\penParIneq^k_i \upto \infty$ for all $i = 1, \dotsc, p$.
  Moreover, let $(x^k,y^k)$ be a sequence of partial minima
  of~\eqref{eq:penalty-problem} (for $\penParEqu = \penParEqu^k$ and
  $\penParIneq = \penParIneq^k$) generated by \refalgo{alg:padm}
  with $(x^k,y^k) \to (x^*,y^*)$.
  Then there exist weights $\penParEquUb, \penParIneqUb \geq 0$ such
  that $(x^*,y^*)$ is a partial minimizer of the feasibility measure
  $\infeasMeas_{\penParEquUb, \penParIneqUb}$.

  If, in addition, $(x^*,y^*)$ is feasible for the original
  problem~\eqref{eq:abstract-problem}, the following holds:
  \begin{enumerate}[label=\alph*)]
  \item If $f$ is continuous, then $(x^*,y^*)$ is a partial minimum of
    \eqref{eq:abstract-problem}.
  \item If $f$ is continuously differentiable, then $(x^*,y^*)$ is a
    stationary point of \eqref{eq:abstract-problem}.
  \item If $f$ is continuously differentiable and $f$ and $\Omega$ are
    convex, then $(x^*,y^*)$ is a global optimum of \eqref{eq:abstract-problem}.
  \end{enumerate}
\end{theorem}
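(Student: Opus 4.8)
The plan is to deduce the theorem almost entirely from the two preceding lemmas and from the classical ADM convergence theory recalled at the start of Section~\ref{sec:adm}. The first assertion---that there exist weights $\penParEquUb, \penParIneqUb \geq 0$ with $(x^*,y^*)$ a partial minimizer of $\infeasMeas_{\penParEquUb, \penParIneqUb}$---is literally the conclusion of Lemma~\ref{lemma:conv-to-part-min-of-infeas-meas}, so nothing new is needed there; I would simply invoke it. The remaining work is to handle the case in which the limit point $(x^*,y^*)$ is additionally feasible for~\eqref{eq:abstract-problem}, i.e.\ $\infeasMeas(x^*,y^*)=0$, and to upgrade ``partial minimizer of the feasibility measure'' to the three successively stronger conclusions about $f$.

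For part~a), feasibility of $(x^*,y^*)$ together with Lemma~\ref{lemma:feas-part-min-of-pen-problems} is \emph{almost} what is wanted, but note a subtlety: Lemma~\ref{lemma:feas-part-min-of-pen-problems} requires $(x^*,y^*)$ to be a partial minimum of $\penFun_1(\cdot\,;\penParEqu,\penParIneq)$ for some \emph{fixed} penalty pair, whereas here $(x^*,y^*)$ arises as a limit of partial minima of penalty problems with $\penParEqu^k,\penParIneq^k \to \infty$. So the first step is to establish that $(x^*,y^*)$ is itself a partial minimum of $\penFun_1$ for some fixed weights---and the natural candidate is exactly the $(\penParEquUb,\penParIneqUb)$ produced by Lemma~\ref{lemma:conv-to-part-min-of-infeas-meas}. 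Concretely: since $(x^*,y^*)$ is feasible, $\infeasMeas_{\penParEquUb,\penParIneqUb}(x^*,y^*)=0$, and for any $x\in X$ with $(x,y^*)$ feasible we have $\penFun_1(x,y^*;\penParEquUb,\penParIneqUb)=f(x,y^*)$ while $\penFun_1(x^*,y^*;\penParEquUb,\penParIneqUb)=f(x^*,y^*)$; combining this with the feasibility-measure partial-minimality from Lemma~\ref{lemma:conv-to-part-min-of-infeas-meas} and an argument analogous to the proof of Lemma~\ref{lemma:feas-part-min-of-pen-problems} gives $f(x,y^*)\ge f(x^*,y^*)$, and symmetrically in $y$. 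Hence $(x^*,y^*)$ is a partial minimum of~\eqref{eq:abstract-problem}, which is part~a). (If one wants the $(x^*,y^*)$-is-partial-minimum-of-$\penFun_1$ statement verbatim, one observes that adding $f$ back to a nonnegative weighted feasibility measure that is minimized at a feasibility point does not destroy partial minimality at that point, using continuity of $f$ to pass any needed limits.)

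Parts~b) and~c) then follow by feeding the conclusion of part~a) into the stronger ADM facts quoted after the first theorem of Section~\ref{sec:adm}: when $f$ is continuously differentiable, partial minima of~\eqref{eq:abstract-problem} are stationary points in the classical sense, giving~b); and when in addition $f$ and $\Omega$ (equivalently $\feasSet$) are convex, partial minima are global minima, giving~c). These are cited results, so the proof here is just a matter of verifying the hypotheses apply and invoking them. The main obstacle---and the only genuinely delicate point---is the bookkeeping in the first step of part~a): one must be careful that the weights $(\penParEquUb,\penParIneqUb)$ from Lemma~\ref{lemma:conv-to-part-min-of-infeas-meas} are a \emph{fixed} pair (they are, being a limit of normalized sequences), that feasibility forces the weighted infeasibility to vanish at $(x^*,y^*)$, and that on the feasible slice the penalty function reduces to $f$ so that comparisons of $\penFun_1$-values become comparisons of $f$-values. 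Everything else is an application of results already in hand.
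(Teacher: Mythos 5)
Your overall architecture coincides with the paper's: the first assertion is Lemma~\ref{lemma:conv-to-part-min-of-infeas-meas} verbatim, and b) and c) follow from a) via the quoted facts about partial minima (stationarity under differentiability, globality under convexity). You also put your finger on exactly the delicate point, which the paper's own two-line proof passes over silently: Lemma~\ref{lemma:feas-part-min-of-pen-problems} assumes partial minimality of $\penFun_1(\cdot\,;\penParEqu,\penParIneq) = f + \infeasMeas_{\penParEqu,\penParIneq}$ for a \emph{fixed} weight pair, whereas Lemma~\ref{lemma:conv-to-part-min-of-infeas-meas} only delivers partial minimality of $\infeasMeas_{\penParEquUb,\penParIneqUb}$, a function in which $f$ does not appear at all.

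The bridge you build across this mismatch does not hold. For $x \in X$ with $(x,y^*)$ feasible, both $\infeasMeas_{\penParEquUb,\penParIneqUb}(x,y^*)$ and $\infeasMeas_{\penParEquUb,\penParIneqUb}(x^*,y^*)$ vanish, so the partial-minimality inequality for the feasibility measure reduces to $0 \geq 0$ and says nothing about how $f(x,y^*)$ compares to $f(x^*,y^*)$; the step ``combining this \dots gives $f(x,y^*) \geq f(x^*,y^*)$'' is a non sequitur, and the parenthetical claim that adding $f$ back preserves partial minimality fails for the same reason ($\infeasMeas_{\penParEquUb,\penParIneqUb}$ may vanish on an entire feasible slice along which $f$ strictly decreases away from $(x^*,y^*)$). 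Concretely: take $X = Y = [0,1]$, $f(x,y) = -x$, and the single constraint $\eqCons(x,y) = xy = 0$. The point $(0,0)$ is feasible and is a partial minimizer of $\infeasMeas_{1}(x,y) = \abs{xy}$, yet it is not a partial minimum of the problem, since every $(x,0)$ is feasible and $f(x,0) = -x < 0 = f(0,0)$ for $x > 0$. So the inference ``partial minimizer of the weighted feasibility measure, plus feasible, implies partial minimum of~\eqref{eq:abstract-problem}'' is simply false, and any correct proof of a) must instead exploit the $f$-information in the inequalities $\penFun_1(x,y^k;\penParEqu^k,\penParIneq^k) \geq \penFun_1(x^k,y^k;\penParEqu^k,\penParIneq^k)$ along the sequence; the difficulty there is that $(x,y^k)$ need not be feasible even when $(x,y^*)$ is, so the term $\infeasMeas_{\penParEqu^k,\penParIneq^k}(x,y^k)$ can blow up with the weights and the inequality degenerates in the limit. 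In short, you correctly diagnosed the gap (which the published proof shares), but the repair you propose does not close it, so a)---and with it b) and c)---remains unproven in your write-up.
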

\begin{proof}
  The first part always holds by
  Lemma~\ref{lemma:conv-to-part-min-of-infeas-meas}.
  If, in addition, the obtained partial minimum of
  $\infeasMeas_{\penParEquUb, \penParIneqUb}$ satisfies
  $\infeasMeas_{\penParEquUb, \penParIneqUb}(x^*,y^*) = 0$,
  we can apply Lemma~\ref{lemma:feas-part-min-of-pen-problems} and
  obtain the statements a)--c).
\end{proof}

In the next theorem we generalize the classical result on the
exactness of the $\ell_1$ penalty function (\cf, \eg
\cite{Han_Mangasarian:1979,Nocedal_Wright:2006}) to the setting of
partial minima.
For the ease of presentation, we state and prove this result only for the
case without inequality constraints.
However, the result can also be applied to problems including
inequality constraints by using standard reformulation techniques to
translate inequality constrained to equality constrained problems.
Beforehand, we need two assumptions:
\begin{assumption}
  \label{ass:loc-part-lipschitz}
  The objective function $f : X \times Y \to \reals$ of
  Problem~\eqref{eq:abstract-problem} is locally
  Lipschitz continuous in the direction of $x$ and of $y$, \ie for
  every $(x^*,y^*) \in \feasSet$ there exists an open set~$N(x^*,y^*)$
  containing $(x^*,y^*)$ and a constant~$L \geq 0$ such that
  \begin{align*}
    \abs{f(x, y^*) - f(x^*, y^*)}
    \leq L \norm{x - x^*}
    &\quad \text{for all } x \text{ with } (x,y^*) \in N(x^*,y^*),
    \\
    \abs{f(x^*, y) - f(x^*, y^*)}
    \leq L \norm{y - y^*}
    &\quad \text{for all } y \text{ with } (x^*,y) \in N(x^*,y^*).
  \end{align*}
\end{assumption}

Note that if one set, say $Y$, is discrete, the corresponding
condition is trivially satisfied.
In this case any set of the form $(U, \set{y^{*}})$, where $U
\subseteq X$ is an open neighborhood around $x^{*}$, is an open
neighborhood around $(x^{*}, y^{*})$.

\begin{assumption}
  \label{ass:g-direct-deriv-cond}
  For every constraint~$\eqCons_i, i=1,\dotsc,m$, there exists a
  constant~\mbox{$l_i > 0$} such that
  \begin{align*}
    l_i \norm{x - x^*} \leq \abs{\eqCons_i(x, y^*) - \eqCons_i(x^*,y^*)}
    & \quad \text{for all } x \text{ with } (x,y^*) \in N(x^*,y^*),
    \\
    l_i \norm{y - y^*} \leq \abs{\eqCons_i(x^*, y) - \eqCons_i(x^*,y^*)}
    & \quad \text{for all } y \text{ with } (x^*,y) \in N(x^*,y^*).
  \end{align*}
\end{assumption}
Note that in the case of existing directional derivatives of $\eqCons_i$, the
latter assumption states that the directional derivatives of the $\eqCons_i$
both in the direction of $x$ and of $y$ are bounded away from zero.
Before we state and proof the exactness theorem we brief\/ly discuss
the latter assumption.
In the context of \adms, the constraints~$\eqCons(x,y)=0$ are mostly
so-called copy constraints of the type
\begin{equation*}
  \eqCons(x,y) = A(x-y) = 0
\end{equation*}
that are used to decompose the genuine problem formulation such that
it fits into the framework of Problem~\eqref{eq:abstract-problem};
\cf, \eg \textcite{Nowak:2005}.
If the matrix~$A$ is square and has full rank---as it
is typically the case for copy constraints---the constraints~$g$ are
bi-Lipschitz and thus fulfill Assumption~\ref{ass:g-direct-deriv-cond}.
Now, we are ready to state and prove the theorem on exactness of
the $\ell_1$ penalty function \wrt partial minima.
\begin{theorem}
  Let $(x^*,y^*)$ be a partial minimizer of
  \begin{equation}
    \label{eq:abstract-problem-wo-ineqs}
    \min_{x,y} \quad f(x,y)
    \quad \st \quad
    \eqCons(x,y) = 0, \ x \in X, \ y \in Y,
  \end{equation}
  and suppose that Assumptions~\ref{ass:loc-part-lipschitz}
  and~\ref{ass:g-direct-deriv-cond} hold.
  Then there exists a constant~$\penParEquUb > 0$ such that
  $(x^*,y^*)$ is a partial minimizer of
  \begin{equation*}
    \min_{x,y} \quad \penFun_1(x,y; \penParEqu)
    \quad \st \quad x \in X, \ y \in Y
  \end{equation*}
  for all $\penParEqu \geq \penParEquUb$
  and
  \begin{equation*}
    \penFun_1(x,y; \penParEqu)
    \define
    f(x,y) + \sum_{i=1}^\nrEqCons \penParEqu_i \abs{\eqCons_i(x,y)}.
  \end{equation*}
\end{theorem}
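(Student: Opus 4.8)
The plan is to adapt the classical proof of exactness of the $\ell_1$ penalty function (\cf, \eg \cite{Han_Mangasarian:1979,Nocedal_Wright:2006}) to the partial-minimum setting, by running the standard estimate separately in the $x$-direction and in the $y$-direction so that it matches the definition of a partial minimizer. As in the classical situation the statement is local: the two inequalities that characterize a partial minimizer of the penalty problem will be verified for all $x \in X$ with $(x,y^*) \in N(x^*,y^*)$ and for all $y \in Y$ with $(x^*,y) \in N(x^*,y^*)$, where---after intersecting the neighborhoods and enlarging constants---$N(x^*,y^*)$ is a common open neighborhood on which the Lipschitz constant~$L$ from Assumption~\ref{ass:loc-part-lipschitz} and all the constants~$l_i$ from Assumption~\ref{ass:g-direct-deriv-cond} are valid. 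Note first that feasibility of $(x^*,y^*)$ gives $\eqCons_i(x^*,y^*) = 0$ for all $i$, hence $\penFun_1(x^*,y^*;\penParEqu) = f(x^*,y^*)$ for every $\penParEqu$; the goal is thus to find a threshold $\penParEquUb$ such that, whenever $\penParEqu \ge \penParEquUb$, one has $\penFun_1(x,y^*;\penParEqu) \ge f(x^*,y^*)$ for all such $x$ and $\penFun_1(x^*,y;\penParEqu) \ge f(x^*,y^*)$ for all such $y$.

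The core of the argument is a single chain of inequalities in the $x$-direction. For $x \in X$ with $(x,y^*) \in N(x^*,y^*)$, Assumption~\ref{ass:loc-part-lipschitz} gives $f(x,y^*) \ge f(x^*,y^*) - L\norm{x-x^*}$, while Assumption~\ref{ass:g-direct-deriv-cond} gives $\abs{\eqCons_i(x,y^*)} = \abs{\eqCons_i(x,y^*) - \eqCons_i(x^*,y^*)} \ge l_i\norm{x-x^*}$ for every~$i$; adding up the penalty terms yields
\[
  \penFun_1(x,y^*;\penParEqu)
  = f(x,y^*) + \sum_{i=1}^{\nrEqCons} \penParEqu_i \abs{\eqCons_i(x,y^*)}
  \ge f(x^*,y^*) + \left(\sum_{i=1}^{\nrEqCons} \penParEqu_i l_i - L\right)\norm{x-x^*}.
\]
Hence any $\penParEquUb$ with $\penParEquUb_i > 0$ for all $i$ and $\sum_{i=1}^{\nrEqCons} \penParEquUb_i l_i \ge L$ works---for concreteness one may take $\penParEquUb_i \define \max\{1,\, L/l_i\}$---since for every $\penParEqu \ge \penParEquUb$ (componentwise, using $l_i > 0$) the bracketed coefficient is nonnegative and therefore $\penFun_1(x,y^*;\penParEqu) \ge f(x^*,y^*) = \penFun_1(x^*,y^*;\penParEqu)$. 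The $y$-direction is the verbatim symmetric computation, using the second halves of Assumptions~\ref{ass:loc-part-lipschitz} and~\ref{ass:g-direct-deriv-cond}, and requires no new constant. Putting the two inequalities together gives exactly the partial-minimality conditions for the penalty problem, which finishes the proof.

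The hard part here is not the estimate itself, which is short, but getting the bookkeeping of the neighborhoods right and being precise about the meaning of a \emph{partial minimizer} in the conclusion. Assumption~\ref{ass:g-direct-deriv-cond} forces $\eqCons_i(x,y^*) = 0$ inside $N(x^*,y^*)$ only at $x = x^*$, so inside $N$ there are no nontrivial feasible competitors at all; consequently the partial-minimality hypothesis on the \emph{constrained} problem is not actually invoked for points strictly inside $N$---the infeasibility penalty alone does the work---and the argument is inherently local. A global version of the conclusion is false in general: if the constraint violation decays faster than the objective near a feasible point that is tied for the minimum, then no finite $\penParEqu$ can make $(x^*,y^*)$ a global partial minimizer of the penalty problem. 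The statement is therefore to be read, exactly as in the classical exactness theorem, as the existence of a neighborhood of $(x^*,y^*)$ on which the two partial-minimality inequalities hold for every $\penParEqu \ge \penParEquUb$.
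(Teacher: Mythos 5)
Your core estimate is exactly the one in the paper's proof: bound $f(x,y^*)$ from below by $f(x^*,y^*)-L\norm{x-x^*}$ via Assumption~\ref{ass:loc-part-lipschitz}, bound each $\abs{\eqCons_i(x,y^*)}$ from below by $l_i\norm{x-x^*}$ via Assumption~\ref{ass:g-direct-deriv-cond}, and pick the threshold so that the penalty term dominates the possible decrease of $f$ (the paper takes the uniform value $\penParEquUb = L/(\nrEqCons\bar{l})\,e$ with $\bar{l}=\min_i l_i$, which plays the same role as your $\max\{1,L/l_i\}$). The genuine divergence is in what is being proved. In this paper a partial minimizer of the penalty problem is a \emph{global} notion: the inequality $\penFun_1(x^*,y^*;\penParEqu)\leq\penFun_1(x,y^*;\penParEqu)$ must hold for \emph{all} $x\in X$, not only for $x$ with $(x,y^*)\in N(x^*,y^*)$. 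You prove only the local inequalities and then propose to reread the theorem as a local statement; that is a strictly weaker claim than the one asserted, and it is the global version that the surrounding convergence theory uses. The missing ingredient is the case split the paper performs: for a competitor $(x,y^*)$ that is \emph{feasible}, the penalty terms vanish and the desired inequality is precisely the partial-minimality hypothesis on the constrained problem---so that hypothesis is not dispensable; your observation that it is never invoked is an artifact of localizing to $N(x^*,y^*)$, where Assumption~\ref{ass:g-direct-deriv-cond} indeed rules out feasible competitors other than $x^*$. For an \emph{infeasible} competitor the penalty estimate takes over.

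Your concern about applying the two assumptions outside $N(x^*,y^*)$ is legitimate: the paper remarks that local Lipschitz continuity of $f$ upgrades to global Lipschitz continuity on the compact slices $X\times\set{y^*}$ and $\set{x^*}\times Y$, but it applies the lower bound of Assumption~\ref{ass:g-direct-deriv-cond} to every infeasible $x\in X$ without comment. The remedy, however, is not to retreat to a local conclusion but to combine the feasible/infeasible case split with compactness: infeasible points outside the neighborhoods attached to the feasible slice have $\sum_i\abs{\eqCons_i(\cdot\,,y^*)}$ bounded away from zero, so a sufficiently large finite $\penParEqu$ disposes of them, while near any feasible competitor the partial-minimality hypothesis together with your local estimate applies. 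In particular, your claim that the global conclusion is ``false in general'' does not stand under the paper's reading of the assumptions, in which the constants of Assumptions~\ref{ass:loc-part-lipschitz} and~\ref{ass:g-direct-deriv-cond} are available at every feasible point.
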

\begin{proof}
  Since $(x^*,y^*)$ is a partial minimizer of
  Problem~\eqref{eq:abstract-problem-wo-ineqs}, it holds
  that
  \begin{subequations}
    \label{eq:z-star-part-min-nlp}
    \begin{align}
      f(x,y^*) \geq f(x^*, y^*)
      & \quad \text{for all } (x,y^*) \in \feasSet,
      \\
      f(x^*,y) \geq f(x^*, y^*)
      & \quad \text{for all } (x^*,y) \in \feasSet,
    \end{align}
  \end{subequations}
  where $\feasSet$ is the feasible region of
  Problem~\eqref{eq:abstract-problem-wo-ineqs}.
  First, assume that $(x,y^*)$ is feasible for
  Problem~\eqref{eq:abstract-problem-wo-ineqs}.
  Using~\eqref{eq:z-star-part-min-nlp} we obtain
  \begin{align*}
    \penFun_1(x^*,y^*; \penParEqu)
    & = f(x^*,y^*) + \sum_{i=1}^\nrEqCons \penParEqu_i \abs{\eqCons_i(x^*,y^*)}
    \\
    & = f(x^*,y^*) \leq f(x,y^*)
    \\
    & = f(x,y^*) + \sum_{i=1}^\nrEqCons \penParEqu_i \abs{\eqCons_i(x,y^*)}
      \\
    & = \penFun_1(x,y^*; \penParEqu)
  \end{align*}
  for all $\penParEqu$.
  The inequality
  \begin{equation*}
    \penFun_1(x^*,y^*; \penParEqu)
    \leq
    \penFun_1(x^*,y; \penParEqu)
  \end{equation*}
  can be shown analogously assuming that $(x^{*},y)$ is feasible.

  We now consider the case that $(x,y^*)$ is not feasible for
  Problem~\eqref{eq:abstract-problem-wo-ineqs}.
  We set $\penParEquUb \define L / (\nrEqCons \bar{l}) e > 0$,
  where $\bar{l} \define \min_{i=1,\dots,\nrEqCons}\set{l_i}, e =
  {(1,\dotsc,1)}^\top \in \reals^\nrEqCons$, and show that
  for all $\penParEqu \geq \bar{\penParEqu}$ the inequality
  \begin{equation}
    \label{eq:p-min-pen-func-def-x}
    f(x,y^*) + \sum_{i=1}^\nrEqCons \penParEqu_i \abs{\eqCons_i(x,y^*)}
    \geq
    f(x^*,y^*) + \sum_{i=1}^\nrEqCons \penParEqu_i \abs{\eqCons_i(x^*,y^*)}
  \end{equation}
  holds for all $x \in X$.
  Since $(x^*,y^*)$ is feasible for
  Problem~\eqref{eq:abstract-problem-wo-ineqs},
  Inequality~\eqref{eq:p-min-pen-func-def-x} is equivalent to
  \begin{equation*}
    \sum_{i=1}^\nrEqCons \penParEqu_i \abs{\eqCons_i(x,y^*)}
    \geq
    f(x^*,y^*) - f(x,y^*).
  \end{equation*}
  In the following, we use that for each $(x, y) \in X \times Y$,
  Assumption~\ref{ass:loc-part-lipschitz} also implies the
  global Lipschitz continuity of $f$ on the compact sets $X \times
  \set{y}$ and $\set{x} \times Y$.
  From the definition of $\penParEquUb$ and the
  Assumptions~\ref{ass:loc-part-lipschitz} and
  \ref{ass:g-direct-deriv-cond} we obtain
  \begin{align*}
    & \sum_{i=1}^\nrEqCons \penParEqu_i \abs{\eqCons_i(x,y^*)}
      \geq \penParEquUb \sum_{i=1}^\nrEqCons \abs{\eqCons_i(x,y^*)}
      = \penParEquUb \sum_{i=1}^\nrEqCons \abs{\eqCons_i(x,y^*) - \eqCons_i(x^*,y^*)}
    \\
    \geq \ & \penParEquUb \sum_{i=1}^\nrEqCons l_i \norm{x - x^*}
             \geq \penParEquUb \nrEqCons \bar{l} \norm{x - x^*}
             = L \norm{x - x^*}
    \\
    \geq \ & \abs{f(x^*,y^*) - f(x,y^*)}
             \geq f(x^*,y^*) - f(x,y^*).
  \end{align*}
  Thus,~\eqref{eq:p-min-pen-func-def-x} holds for all $\penParEqu \geq
  \penParEquUb$.
  Analogously, it can be shown that the inequality
  \begin{equation*}
    f(x^*,y) + \sum_{i=1}^\nrEqCons \penParEqu_i \abs{\eqCons_i(x^*,y)}
    \geq
    f(x^*,y^*) + \sum_{i=1}^\nrEqCons \penParEqu_i \abs{\eqCons_i(x^*,y^*)}
  \end{equation*}
  holds for all $y \in Y$.
\end{proof}


\subsection{The Penalty ADM as a Feasibility Pump}
\label{sec:padm-as-fp}

In this section we discuss the application of the proposed penalty
alternating direction method as a new variant of a feasibility pump
algorithm for convex \minlps.
The motivation is the following:
We have seen in the last section that idealized, \ie
perturbation-free, feasibility pumps for convex \minlps terminate at 
partial minima after a finite number of iterations.
However, it is possible that the obtained partial minimum is not
integer feasible.
Feasibility pumps typically try to resolve this problem by applying a
random perturbation of the integer components.
This procedure has the significant drawback that it renders a
convergence theory of the overall method (almost) impossible.
In contrast to these random perturbations, the method we propose uses
a theoretically analyzable penalty framework to escape integer infeasible
partial minima.

We start by rewriting Problem~\eqref{eq:MINLP} by again duplicating
the integer components~$x_\intIdxSet$ of $x$ and obtain
\begin{equation}
  \label{eq:MINLP-reformulated}
  \min_{x,y} \quad
  \objFun(x) \quad
  \st \quad
  \ineqCons(x) \geq 0,
  \quad
  x_\intIdxSet = y,
  \quad
  y \in \integers^\intIdxSet \cap [l_\intIdxSet, u_\intIdxSet] .
\end{equation}
With the compact sets
\begin{align*}
  X \define \condset{x}{\ineqCons(x) \geq 0},
  \quad
  Y \define \integers^\intIdxSet \cap [l_\intIdxSet, u_\intIdxSet]
\end{align*}%
and the additional equality constraints
\begin{align*}
  \eqCons(x,y) = x_\intIdxSet - y = 0
\end{align*}
we can apply \refalgo{alg:padm} to
Problem~\eqref{eq:MINLP-reformulated}.
Note that the problem interfaced to the penalty ADM does not contain
any inequality constraints explicitly since we moved them to the
set~$X$.
This also simplifies the $\ell_1$ penalty function $\penFun_1(x,y;
\penParEqu, \penParIneq)$ to $\penFun_1(x,y; \penParEqu$).
In the $l$th ADM iteration of the $k$th penalty iteration of 
\refalgo{alg:padm} the two subproblems being solved 
are
\begin{equation*}
  \min_{x \in X} \quad \penFun_1(x,y^{k,l}; \penParEqu^k),
\end{equation*}
which can be written as
\begin{equation}
  \label{eq:NLP}
  \min_x \quad
  f(x) + \sum_{\intIdx \in \intIdxSet} \penParEqu_i^k \abs{x_i - y_i^{k,l}} 
  \quad \st \quad \ineqCons(x) \geq 0,
\end{equation}
and
\begin{equation*}
  \min_{y \in Y} \quad \penFun_1(x^{k,l+1},y;  \penParEqu^k),
\end{equation*}
which can be written as
\begin{equation}
  \label{eq:RND}
  \min_y \quad f(x^{k,l+1}) + \sum_{\intIdx \in \intIdxSet}
  \penParEqu^{k}_i \abs{x_i^{k,l+1} - y_i}
  \quad \st \quad 
  y \in \integers^\intIdxSet \cap [\lb_\intIdxSet, \ub_\intIdxSet].
\end{equation}
Note that Problem~\eqref{eq:NLP} is the \nlp relaxation
of~\eqref{eq:MINLP}, where the original objective function is augmented
by a weighted $\ell_1$ penalty term.
Note further that solving Problem~\eqref{eq:RND} simply
means to apply a weighted rounding of the variables~$y_i, \intIdx \in
\intIdxSet$.



\section{Implementation Issues}
\label{sec:implementation-issues}

In this section we comment on important implementation issues.
First, we rewrite Problem~\eqref{eq:MINLP-reformulated} by
replacing the coupling equality~$x_\intIdxSet = y$ by inequality
constraints in order to be able to penalize a coupling
error~$x_\intIdx > y_\intIdx, \intIdx \in \intIdxSet$, different
than the error~$y_\intIdx > x_\intIdx$.
In addition, we also explicitly state all variable bounds from now
on.
Thus, we obtain
\begin{align*}
  \min_{x,y} \quad & \objFun(x)\\
  \st \quad & \ineqCons(x) \geq 0, \quad x \in [l,u],\\
                   & x_\intIdxSet \geq y, \quad
                     y \geq x_\intIdxSet,
                     \quad y \in \integers^\intIdxSet \cap
                     [l_\intIdxSet, u_\intIdxSet].
\end{align*}
In other words, we slightly modified the compact and non-empty
constraint sets to
\begin{align*}
  X \define \condset{x \in [l,u]}{\ineqCons(x) \geq 0},
  \quad
  Y \define \integers^\intIdxSet \cap [l_\intIdxSet, u_\intIdxSet]
\end{align*}
and replaced the coupling equalities by coupling inequalities.
The two subproblems~\eqref{eq:NLP} and~\eqref{eq:RND} that are solved
within the $l$th ADM iteration of the $k$th penalty iteration
are now given by
\begin{subequations}
  \label{eq:NLP-rewritten}
  \begin{align}
    \min_x \quad
    & f(x) + \sum_{\intIdx \in \intIdxSet}
      \left(\ubar{\penParIneq}^{k}_i {[x_i - y_i^{k,l}]}^- +
      \bar{\penParIneq}^{k}_i {[y_i^{k,l} - x_i]}^-\right)
    \\
    \st \quad & \ineqCons(x) \geq 0, \quad x \in [l,u],
  \end{align}
\end{subequations}
and
\begin{subequations}
  \label{eq:RND-rewritten}
  \begin{align}
    \min_y \quad
    & f(x^{k,l+1}) + \sum_{\intIdx \in \intIdxSet}
      \left(\ubar{\penParIneq}^{k}_i {[x_i^{k,l+1} - y_i]}^- +
      \bar{\penParIneq}^{k}_i {[y_i - x_i^{k,l+1}]}^-\right)
    \\
    \st \quad & y \in \integers^\intIdxSet \cap [l_\intIdxSet, u_\intIdxSet],
  \end{align}
\end{subequations}
\ie we also replaced the single penalty parameters~$\penParEqu_i$ for
the equality coupling constraints in
Problem~\eqref{eq:MINLP-reformulated} by two new penalty
parameters~$\ubar{\penParIneq}_i$ and $\bar{\penParIneq}_i$ for the
lower and upper violation of the coupling.

In order to actually implement the penalty ADM based feasibility pump
for \minlps, we follow \textcite{Achterberg_Berthold:2007} and scale
the objective function of Problem~\eqref{eq:NLP-rewritten} such that
the impact of the $\ell_1$~penalty terms and the original objective
function~$\objFun$ can be balanced.
This balancing between feasibility and optimality is
done using the parameter~$\alpha^k \in [0,1]$.
Additionally, we again rewrite the $\ell_1$~penalty terms in the
objective function.
To this end, we denote the set of indices of binary
variables by $\binIdxSet \subseteq \intIdxSet$,
introduce the variables~$d_\intIdx^+, d_\intIdx^- \geq 0$
for all $\intIdx \in \intIdxSet \setminus \binIdxSet$, and rewrite
Problem~\eqref{eq:NLP-rewritten} as
\begin{equation}
  \label{eq:NLP-dash}
  \begin{split}
    \min_{x,d} \quad & \alpha^k
    \frac{\sqrt{|\intIdxSet|}}{\norm{\nabla \objFun(x^{(0,0)})}}\, \objFun(x) +
    (1-\alpha^k) \tilde{\infeasMeas}(x_\binIdxSet, d_{\intIdxSet
      \setminus \binIdxSet}; \penParIneq^\pm_\intIdxSet)\\
    \st \quad & \ineqCons(x) \geq 0, \quad x \in [l,u],\\
    & d_\intIdx^+ \geq x_i - y_i^{k,l} \qConstraintFor{\intIdx \in
      \intIdxSet \setminus \binIdxSet},\\
    & d_\intIdx^- \geq y_i^{k,l} - x_i \qConstraintFor{\intIdx \in
      \intIdxSet \setminus \binIdxSet},\\
    & d_\intIdx^-, d_\intIdx^+ \geq 0
  \end{split}
\end{equation}
with
\begin{equation*}
  \tilde{\infeasMeas}(x_\binIdxSet, d_{\intIdxSet
    \setminus \binIdxSet}; \penParIneq^\pm_\intIdxSet)
  \define
  \sum_{\intIdx \in \binIdxSet_0} \ubar{\penParIneq}^{k}_i x_i
  + \sum_{\intIdx \in \binIdxSet_1} \bar{\penParIneq}^{k}_i (1 - x_i)
  + \sum_{\intIdx \in \intIdxSet \setminus \binIdxSet}
  \left(\ubar{\penParIneq}_\intIdx^{k} d_\intIdx^+
    + \bar{\penParIneq}_\intIdx^{k} d_\intIdx^-\right),
\end{equation*}
where $B_0 \define \condset{i \in \binIdxSet}{y_i^{k,l}=0}$
and $B_1 \define \condset{i \in \binIdxSet}{y_i^{k,l}=1}$.
For binary \minlps, \ie $\intIdxSet = \binIdxSet$, only the objective
function of Problem~\eqref{eq:NLP-dash} may change from one iteration to
the next.
This is of special importance for \mixedinteger linear problems since
then the optimal simplex basis obtained in iteration~$k$ yields
a primal feasible starting basis for iteration~$k+1$.
However, when $\intIdxSet \neq \binIdxSet$,
the optimal basis obtained from iteration~$k$
is generally (primal and dual) infeasible for the \lp that has to be
solved in iteration~$k+1$.

Since $\objFun(x^{k,l+1})$ is constant and $\alpha^k \in [0,1]$, solving
Problem~\eqref{eq:RND-rewritten} is equivalent to solving the
$\card{\intIdxSet}$~independent problems
\begin{equation*}
  y_\intIdx^{k,l+1} \define \argmin_{y_\intIdx}
  \condSet{\ubar{\penParIneq}^{k}_\intIdx {[x_\intIdx^{k,l+1} - y_\intIdx]}^-
    + \bar{\penParIneq}^{k}_\intIdx {[y_\intIdx - x_\intIdx^{k,l+1}]}^-}{y_\intIdx \in
    \integers \cap [l_\intIdx, u_\intIdx]},
  \quad \intIdx \in \intIdxSet.
\end{equation*}
The solutions to these problems can be stated explicitly:
\begin{equation*}
  y_\intIdx^{k,l+1} =
  \begin{cases}
    \lceil x^{k,l+1}_\intIdx \rceil,
    & \text{if } \bar{\penParIneq}^{k}_\intIdx (\lceil x^{k,l+1}_\intIdx \rceil - x^{k,l+1}_\intIdx) \leq
    \ubar{\penParIneq}^{k}_\intIdx (x^{k,l+1}_\intIdx - \lfloor x^{k,l+1}_\intIdx \rfloor),
    \\
    \lfloor x^{k,l+1}_\intIdx \rfloor, & \text{otherwise}.
  \end{cases}
\end{equation*}

Finally, we have a look on the update of the penalty parameters. An
update takes place, whenever the inner ADM loop terminates. In our
implementation, we terminate the $k$th penalty iteration if $\|(x^{k,l},
y^{k,l}) - (x^{k,l-1}, y^{k,l-1})\|_{\infty} \leq \epsilon$ holds,
where $\epsilon = 10^{-5}$.
For the actual update of the penalty parameters, we set
\begin{align*}
  \ubar{\penParIneq}^{k+1}_\intIdx
  & =
    \begin{cases}
      \inc(\ubar{\penParIneq}^{k}_\intIdx), & \text{if }  y_\intIdx^{k,l+1} = \lceil x^{k,l+1}_\intIdx \rceil,\\
      \ubar{\penParIneq}^{k}_\intIdx, & \text{otherwise},
    \end{cases}
  \\
  \bar{\penParIneq}^{k+1}_\intIdx
  &=
    \begin{cases}
      \inc(\bar{\penParIneq}^{k}_\intIdx), & \text{if }  y_\intIdx^{k,l+1} = \lfloor x^{k,l+1}_\intIdx \rfloor,\\
      \bar{\penParIneq}^{k}_\intIdx, & \text{otherwise},
    \end{cases}
\end{align*}
where the penalty parameter update operator~$\inc(a)$ may be any function
with \mbox{$\inc(a) > a$}, \eg $\inc(a) = a+1$ or $\inc(a) = 10a$ are
used in our computational study.
This way, unsuccessful rounding down of the same
variable is eventually followed by rounding up of this variable due
to increasingly penalizing rounding down and vice versa.
Similarly, the conventional feasibility pump algorithm tries to escape
from repeated rounding in the \enquote{wrong} direction by randomly
switching the rounding direction from time to time;
\cf~\textcite{Fischetti_et_al:2005}.
Finally, we set $\alpha^{k+1} = \lambda \alpha^k$ with $\lambda \in
(0,1)$ whenever the penalty parameters are updated.

We note that choosing $\alpha_0 = 0$ in our penalty alternating
direction method based feasibility pump is similar to the
feasibility pump presented in
\textcite{Fischetti_et_al:2005,Bertacco_et_al:2007}, while choosing
$\alpha_0 = 1$ yields an algorithm the behaves similar to the
objective feasibility pump algorithm presented by
\textcite{Achterberg_Berthold:2007}.
Lastly, we note that for $\alpha_0 = 0$, we also have $\alpha_k = 0$ for
all $k > 0$.
In this case the first term of the objective function of
Problem~\eqref{eq:NLP-dash} vanishes for all $k,l$.
In any other case, we can divide the entire objective function by
$\alpha_k$ to be conformal to the theoretical setting presented in
previous sections.


\section{Computational Results}
\label{sec:comp-results}

In this section we present extensive numerical results for the penalty
ADM based feasibility pump introduced in \refsec{sec:padm} and
\ref{sec:implementation-issues}.
Since our method is completely generic in terms of the problem type to
which it is applied, we present computational results
both for \mips in \refsec{sec:comp-results:mip}
and for (convex as well as \nonconvex) \minlps in
\refsec{sec:comp-results:minlp}.

Throughout this section we use log-scaled performance profiles as
proposed
\ifPreprint
by~\textcite{Dolan_More:2001}
\fi
\ifSubmission
in~\cite{Dolan_More:2001}
\fi
to compare running times and
solution quality.
As it is always the case for log-scaled performance profiles the axes
have the following meaning:
If $(x,y)$ lies on a profile curve, this means that the respective
solver is not more than $2^x$-times worse than the best solver on
$100y$\,\si{\percent} of the problems of the test set.
Running times are always given in seconds and,
following~\cite{Koch_et_al:2010}, solution quality is measured by the
primal-dual gap defined by
\begin{equation}\label{eq:primal-dual-gap-def}
  \text{gap} = \frac{b_{\text{p}} - b_{\text{d}}}{\inf
    \condset{\abs{z}}{z \in [b_{\text{d}}, b_{\text{p}}]}},
\end{equation}
where $b_{\text{p}}$ is the primal and $b_{\text{d}}$ is the dual
bound, respectively.
Additionally, we set
$\text{gap}=\infty$ whenever $b_{\text{d}} < 0 \leq b_{\text{p}}$
and $\text{gap}=0$ if $b_{\text{d}} = b_{\text{p}} = 0$.

All computational experiments have been executed on a 12~core
Xeon~5650 ``Westmere'' chip running at \SI{2.66}{\giga\hertz} with
\SI{12}{\mega\byte}~shared cache per chip and \SI{24}{\giga\byte} of
DDR3-1333~RAM\@.
The time limit is set to $t^+ = \SI{1}{\hour}$ without any limit on the
number of iterations for the outer penalty and the inner \adm loop.
Additional information about the computational setup and the
implementation details are given in the respective sections.

\subsection{Mixed-Integer Linear Problems}
\label{sec:comp-results:mip}

We start with discussing the results of our algorithm
applied to \mixedinteger linear problems.
For \mips, our algorithm is implemented in \Cpp and uses
\Gurobi~6.5.0~\cite{Gurobi650} for solving the \lp subproblems.
We use \Gurobi's option \texttt{deterministic} \texttt{concurrent}
for the first \lp and solve all succeeding \lps using the primal simplex
method; \cf~\refsec{sec:implementation-issues}.
The \Cpp code has been compiled with \gcc~4.8.4 using the optimization
flag~\texttt{o3}.
\begin{figure}[tp]
  \centering
  \includegraphics[width=\textwidth]{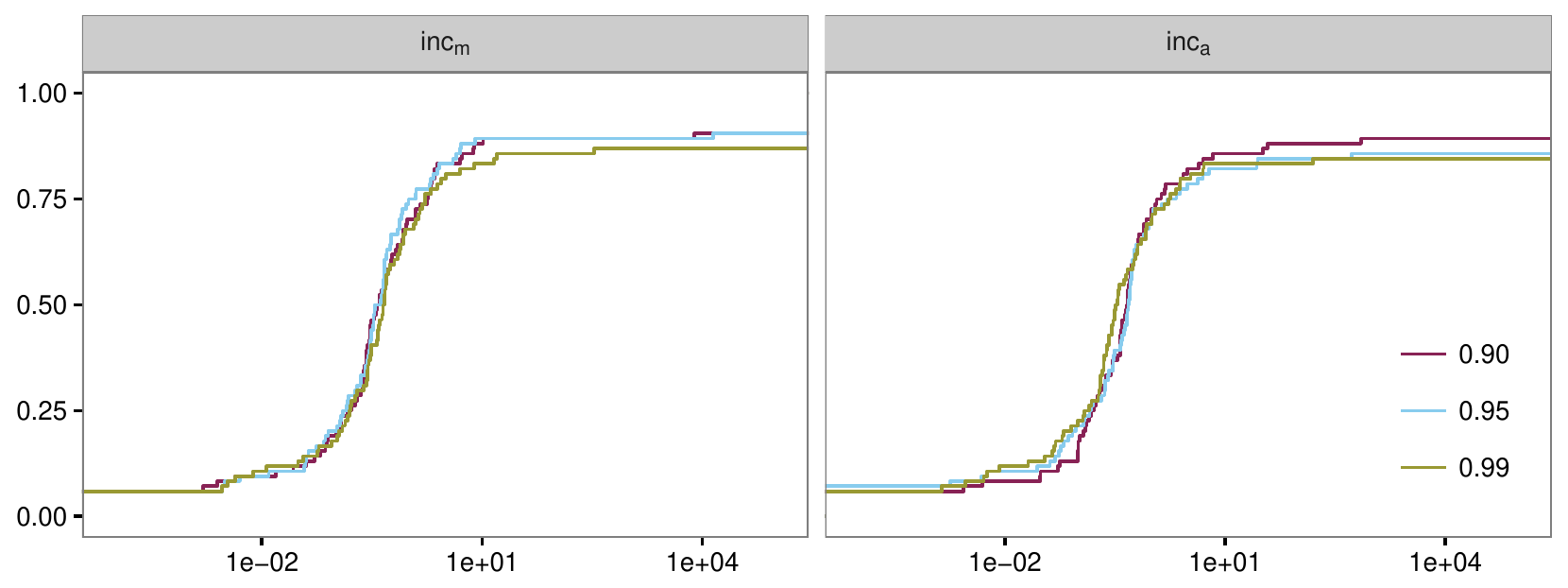}%
  \\
  \includegraphics[width=\textwidth]{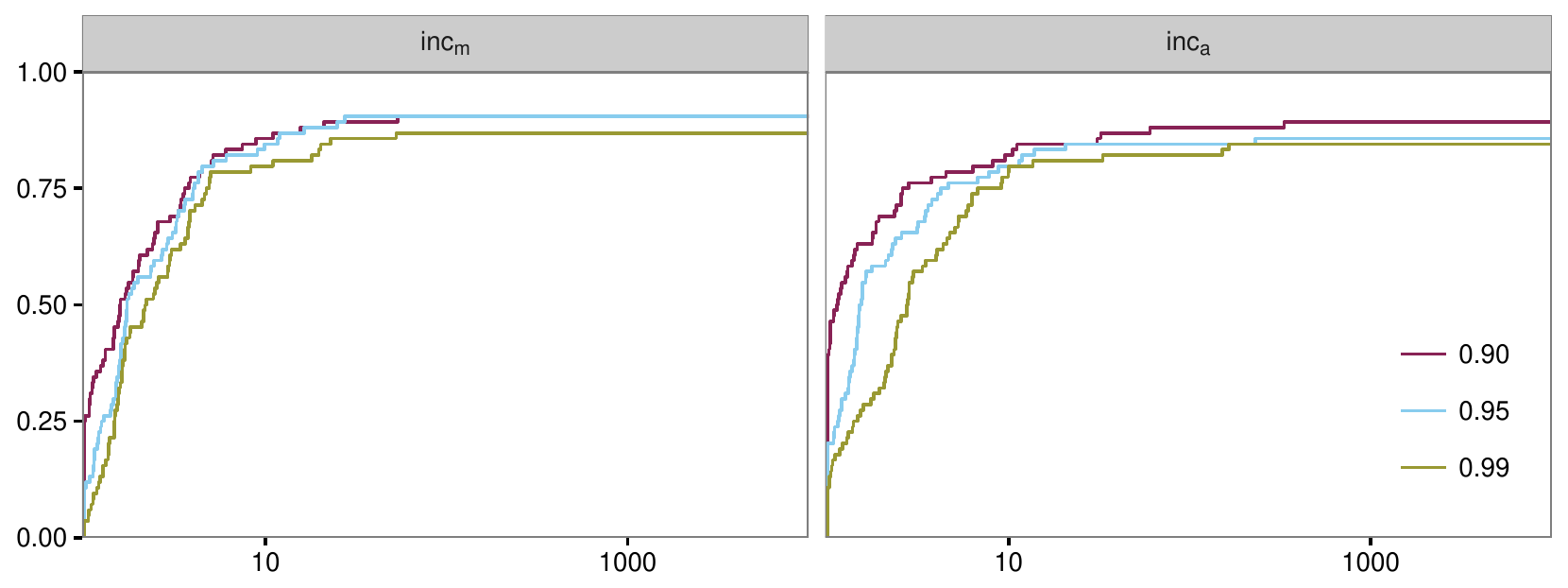}%
  \caption{Non-dominated parameterizations $\lambda \in \set{0.9,
      0.95, 0.99}$, $\inc \in \set{\inc_{\mathrm{m}},
      \inc_{\mathrm{a}}}$ (all with activated \Gurobi presolve) for
    the penalty ADM based feasibility pump for \mips. Top: primal-dual
    gap. Bottom: running times.}
  \label{fig:miplib02010-nondominated}
\end{figure}%
First, we present a parameter study.
Our penalty \adm based feasibility pump can be instantiated using
different choices for certain algorithmic parameters:
The initial convex combination parameter~$\alpha^0$ for
weighting the objective function and the distance
function~$\tilde{\infeasMeas}$ (\cf~Problem~\eqref{eq:NLP-dash}) is
always set to $\alpha^0 = 1$, emulating the objective feasibility pump
of \textcite{Achterberg_Berthold:2007}.
The parameter~$\lambda$ for updating the convex combination
parameter is varied in the set~$\set{0.9, 0.95, 0.99}$ in our
parameter study and the penalty parameter update operator can
be chosen to be the additive variant~$\inc_{\mathrm{a}}(x) = x+1$ or the
multiplicative variant~$\inc_{\mathrm{m}}(x) = 10x$.
In addition, we also tested the impact of (de)activating the \mip presolve
of \Gurobi before applying our method.
Thus, combining the different choices for $\lambda, \inc$, and the
(de)activation of \Gurobi's presolve leads to 12~parameter
combinations.
We applied every of these 12~variants of our method to solve the
\MIPLIBtwentyten benchmark test set excluding the infeasible instances
\codeName{ash608gpia-3col}, \codeName{enlight14},
and \codeName{ns1766074}.
This yields a test set of 84~instances;
\cf~\textcite{Koch_et_al:2010}.
In order to determine the best parameterization, we compare all
12~variants using performance profiles, where the performance
measure is chosen as defined in~\eqref{eq:primal-dual-gap-def}.
We then exclude a parameterization~$p$ if another
parameterization~$p'$ exists that dominates $p$.
Here, domination is defined by a performance profile completely
left-above the other one.
This yields the exclusion of deactivating \Gurobi's presolve
and, thus, 6~remaining parameterizations;
$\lambda \in \set{0.9,0.95,0.99}$ and
$\inc \in \set{\inc_{\mathrm{m}}, \inc_{\mathrm{a}}}$.
The corresponding performance profiles are given in
\reffig{fig:miplib02010-nondominated}.
\begin{figure}[tp]
  \centering
  \includegraphics[width=0.48\textwidth]{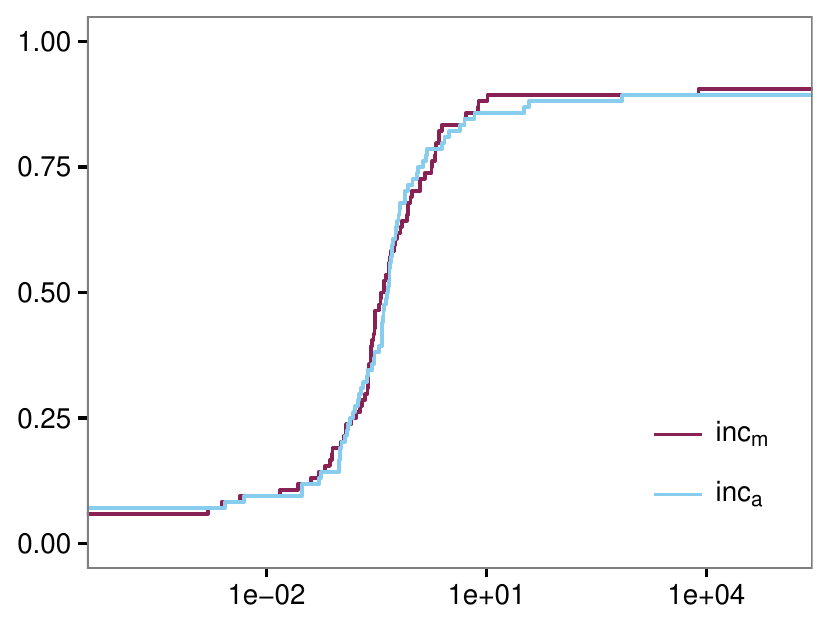}%
  \quad
  \includegraphics[width=0.48\textwidth]{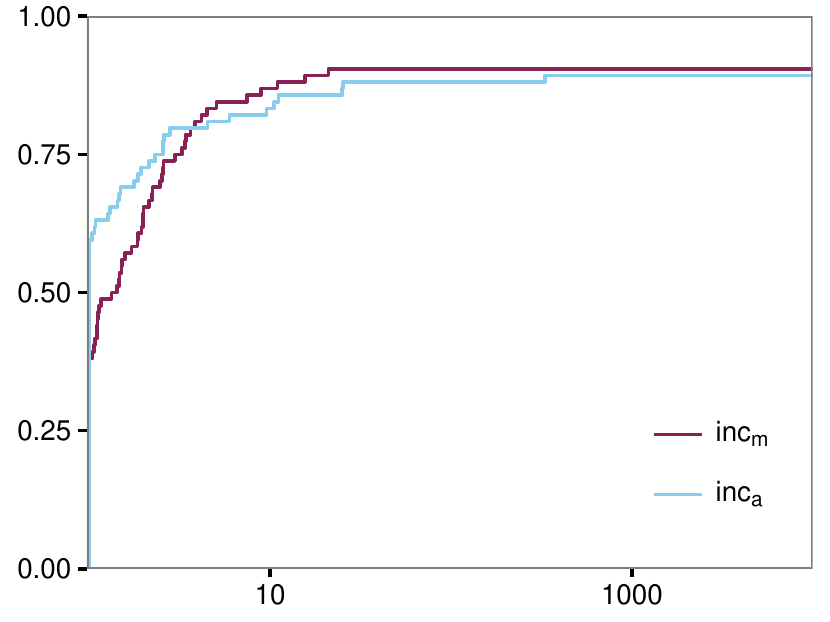}%
  \caption{Performance profiles of primal-dual gap (left) and running
    times (right) for the winner parameterizations $\lambda = 0.9$,
    activated \Gurobi presolve, and
    $\inc \in \set{\inc_{\mathrm{m}}, \inc_{\mathrm{a}}}$ for \mips}
  \label{fig:miplib2010-winners}
\end{figure}%
It can be seen that lower values for $\lambda$
yield more robust instantiations of the algorithm,
\ie the number of instances for which a feasible solution can be found
is larger.
Additionally, all tested variants solve 5 out of 84~instances to global
optimality, except for the variant with $\lambda = 0.95$ and
$\inc_{\text{a}}$ penalty parameter update rule, which solves
6~instances to global optimality.
Altogether, the six parameter choices are quite comparable.
Turning to running times, it can be clearly seen that smaller values
of $\lambda$ also lead to shorter running times.
Thus, our parameter study suggests to activate the \mip presolve of
\Gurobi, to choose $\lambda = \num{0.9}$, and to leave the choice of
the penalty parameter update rule~$\inc \in
\set{\inc_{\mathrm{m}}, \inc_{\mathrm{a}}}$ as an option for the
user.
\reffig{fig:miplib2010-winners} shows the performance profiles
for solution quality (left) and running times (right) for these
``winning'' parameterizations.
We again see that some instances are solved to global
optimality\footnote{The instances
  \codeName{triptim1}, \codeName{pigeon-10}, \codeName{enlight13},
  \codeName{ex9}, and \codeName{ns1758913} are solved to
  global optimality using the $\inc_{\text{m}}$ penalty update rule
  and
  \codeName{ns1208400}, \codeName{triptim1}, \codeName{acc-tight5},
  \codeName{enlight13}, \codeName{ex9}, and \codeName{ns1758913} are
  solved to global optimality using $\inc_{\text{a}}$.}
and that both parameterizations of our algorithm find a feasible
solution for approximately \SI{90}{\percent} of the \MIPLIBtwentyten
benchmark instances (75 out of 84~instances for the $\inc_{\text{a}}$
update rule and 76 for the multiplicative rule~$\inc_{\text{m}}$).
Moreover, the multiplicative update operator~$\inc_{\mathrm{m}}$
yields a slightly more robust algorithm, \ie it finds a feasible
solution for a few more instances than the additive
version~$\inc_{\mathrm{a}}$.
The right part of \reffig{fig:miplib2010-winners} compares the two
winner instantiations \wrt running times.
It can be seen that the additive $\inc_{\mathrm{a}}$ update operator tends to
result in a faster algorithm for significantly more instances than
the multiplicative version (\SI{59.5}{\percent}
vs. \SI{38.1}{\percent}).

Next, we compare our penalty \adm based feasibility pump with the
objective feasibility pump of \textcite{Achterberg_Berthold:2007}.
In order to achieve a fair comparison, we extend our method with
a local branching strategy with an additional $k$-opt neighborhood
constraint with $k = \min \set{20, \lfloor \card{\intIdxSet}/10
  \rfloor}$ and a time limit~$t^+ - t^*$;
\cf~\textcite{Fischetti_Lodi:2003}.
Here $t^*$ denotes the time spent in the penalty ADM based feasibility
pump itself.
Thus, the local branching stage serves as an improvement
heuristic as it is also the case in~\cite{Achterberg_Berthold:2007}.
\begin{figure}[tp]
  \centering
  \includegraphics[width=0.65\textwidth]{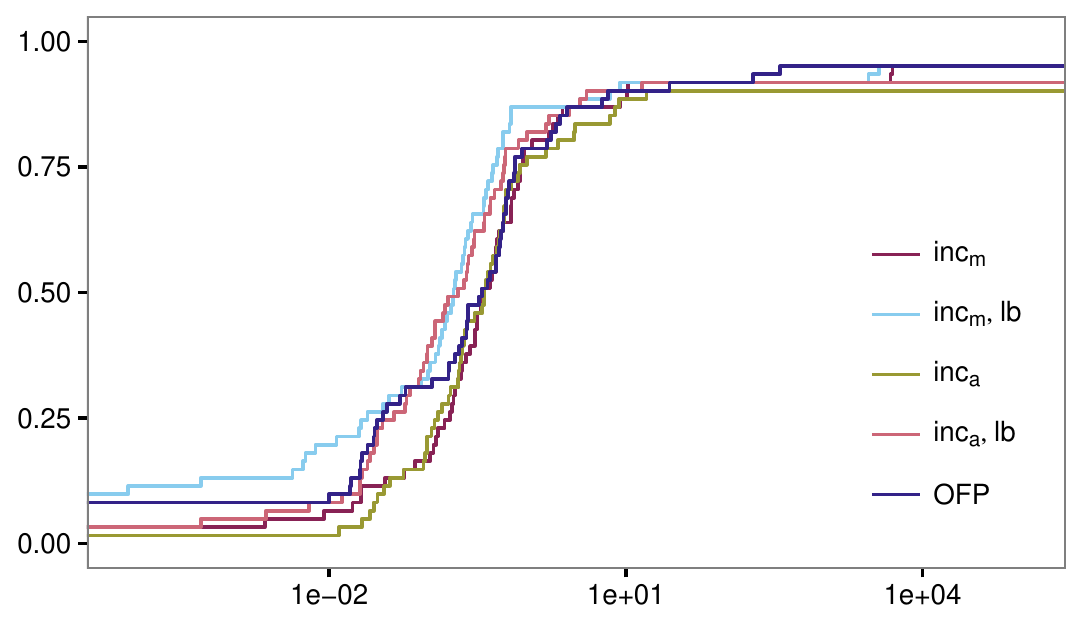}
  \caption{The two winner parameterizations
    (\cf~\reffig{fig:miplib2010-winners}) with and without local
    branching compared to the objective feasibility pump (OFP) by
    \textcite{Achterberg_Berthold:2007}}
  \label{fig:miplib2010-comp-w-ofp}
\end{figure}
As test instances we use all \MIPLIBtwentyten and \MIPLIBtwentothree
instances that have been used in~\cite{Achterberg_Berthold:2007} as
well.
\reffig{fig:miplib2010-comp-w-ofp} shows the primal-dual gap
performance profiles of the two winner parameterizations ($\lambda =
0.9$ and $\inc \in \set{\inc_{\mathrm{m}}, \inc_{\mathrm{a}}}$) with
and without local branching applied as an additional improvement
heuristic as well as the corresponding performance profile curve based
on the results reported by~\textcite{Achterberg_Berthold:2007}.
First of all, it can be seen that all five methods find a feasible
solution for at least \SI{90.2}{\percent} of the tested instances, which
underpins the strength of feasibility pumps in general.
Comparing only the different parameterization of our method we see
that the $\inc_{\text{m}}$ update rule with local branching
outperforms the version without local branching and both variants
using the additive penalty update rule.
The latter also performs similar independent of whether local
branching is used or not, whereas the local branching stage
significantly improves the solution quality when the $\inc_{\text{m}}$
rule is used (in which case we find a global optimal solution for
\SI{11.5}{\percent}; compared to \SI{8.2}{\percent} for the objective
feasibility pump of~\textcite{Achterberg_Berthold:2007}).
One sees that the multiplicative update rule together
with local branching slightly outperforms the objective feasibility
pump of \textcite{Achterberg_Berthold:2007} \wrt solution quality.

\begin{figure}
  \centering
  \includegraphics[width=0.48\textwidth]{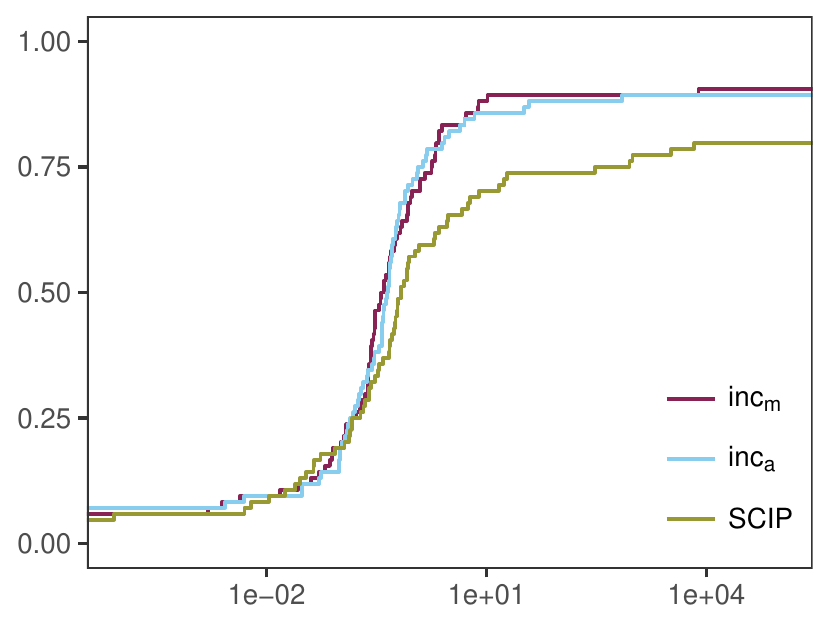}%
  \quad
  \includegraphics[width=0.48\textwidth]{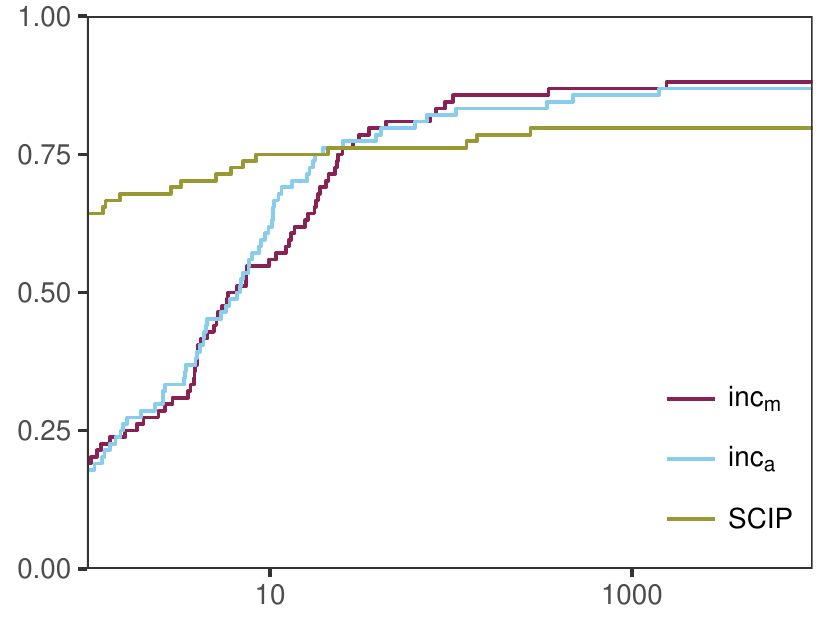}%
  \caption{The two winner parameterizations (see Figure 2) with and
    without local branching compared to the feasibility pump
    implementation for MIPs of \SCIP: primal dual gap (left) and
    running times (right)}
  \label{fig:scip-fp-comparison}
\end{figure}
We now turn to a comparison with the MIP solver \SCIP~3.2.1.
We used \SCIP instead of, \eg \Cplex or \Gurobi, for our comparison
with a state-of-the-art MIP solver because \SCIP is the only solver
that allows to completely de-activate all other components of the solution
process such that we can compare solution quality and running times.
To this end, we de-activated \SCIP's presolve, all cuts, and all
heuristics except for the feasibility pump.\footnote{The feasibility
  pump specific \SCIP parameters are
  \texttt{heuristics/feaspump/maxloops=-1},
  \texttt{heuristics/feaspump/maxlpiterofs=2147483647},
  \texttt{heuristics/feaspump/maxlpiterquot=1e10}, and
  \texttt{heuristics/feaspump/maxstallloops=-1}.
  They are used to avoid a too early stopping of \SCIP's feasibility
  pump.}
We also use a time limit of \SI{1}{\hour}, stop the algorithm
after finding the first feasible solution, and use \Cplex~12.6 as the
internal LP solver.
To be comparable with our feasibility pump implementation that uses
\Gurobi's preprocessing, we presolved all 84~\MIPLIBtwentyten
instances with \Gurobi and solved these presolved instances with
\SCIP's feasibility pump implementation.
The results are given in \reffig{fig:scip-fp-comparison}.
The left figure shows the performance profile of the primal-dual gap.
We see that we are again comparable in terms of solution
quality and that our solutions tend to have a slightly better
objective value.
Moreover, we find a feasible solution for up to approximately
\SI{90}{\percent} of all instances, whereas \SCIP finds a feasible
point for slightly more than \SI{75}{\percent}.
However, this comes at the price of significantly larger running
times; \cf \reffig{fig:scip-fp-comparison} (right).
\SCIP is faster for almost all instances and solves most of them
within approximately \SI{10}{\second}.
Although we did not try to tune our code extensively so far,
the latter comparison shows that there is still a strong potential and
a lot of work to do if our code should be competitive with a
state-of-the-art heuristic \wrt running times.

\begin{figure}
  \centering
  \includegraphics[width=0.48\textwidth]{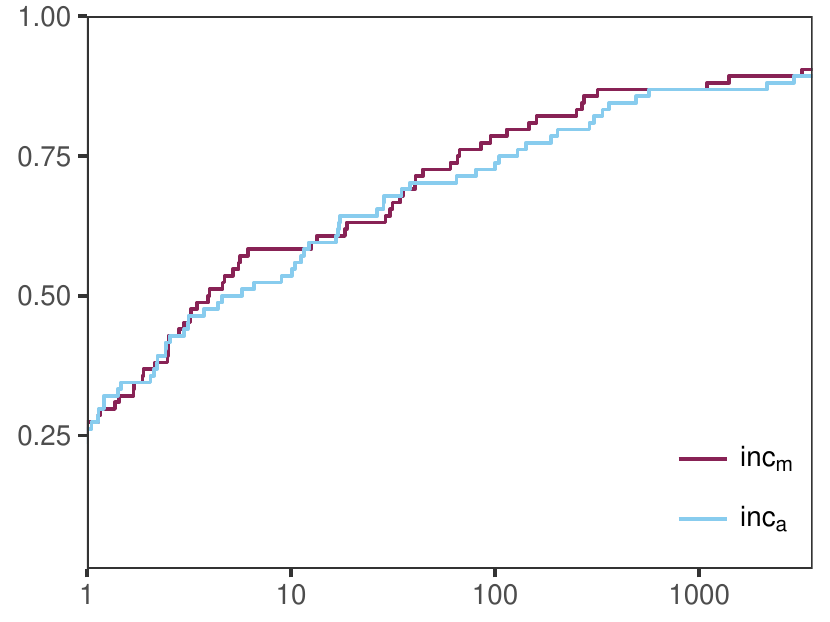}
  \quad
  \includegraphics[width=0.48\textwidth]{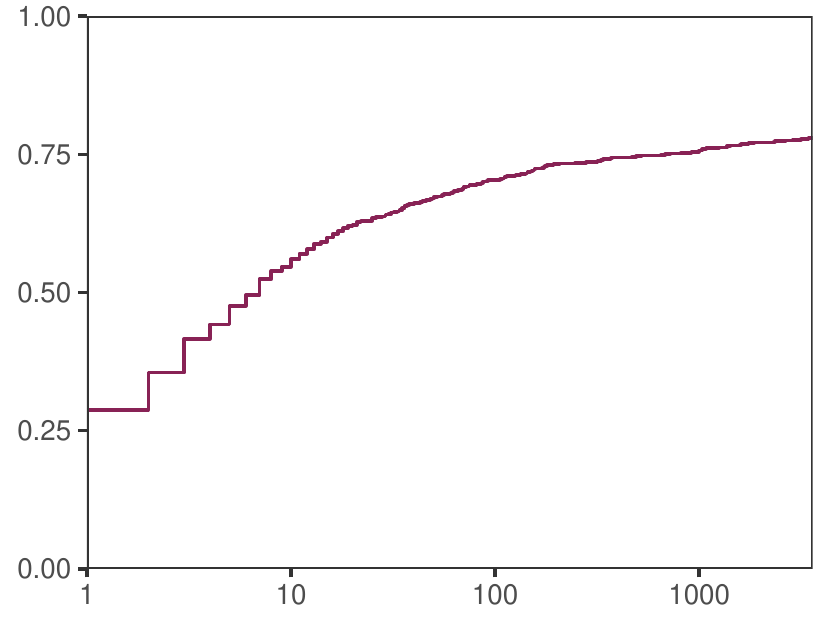}
  \vspace*{-1em}
  \caption{Left: Cumulative distribution function of absolute running
    times ($x$-axis; in s) for two winner parameterizations
    $\inc \in \set{\inc_{\mathrm{m}}, \inc_{\mathrm{a}}}$ on all \MIPLIBtwentyten instances.
    Right: Cumulative distribution function of absolute running
    times ($x$-axis; in s) on all \MINLPLibTwo instances}
  \label{fig:ecdf-absolute-times}
\end{figure}

In \reffig{fig:ecdf-absolute-times} (left) cumulative distribution
functions for absolute running times are given for both winner
parameterizations of our algorithm for \mips.
It can be seen that for approximately \SI{25}{\percent} of all instances
a feasible solution is found in at most \SI{1}{\second} and
\SIrange{50}{60}{\percent} of all instances have been solved to
feasibility within \SI{10}{\second}.
The geometric mean of the running times taken over all instances for
which a feasible solution has been found within the time limit is
\SI{4.56}{\second} for $\inc = \inc_{\mathrm{m}}$ and
\SI{4.94}{\second} for $\inc = \inc_{\mathrm{a}}$.
\ifPreprint
The running times for all instances can be found in
Appendix~\ref{sec:detailed-results-miplib2010}.
\fi
\ifSubmission
The running times for all instances can be found in the online
supplementary material of this paper.
\fi

\begin{figure}[tp]
  \centering
  \includegraphics[width=0.65\textwidth]{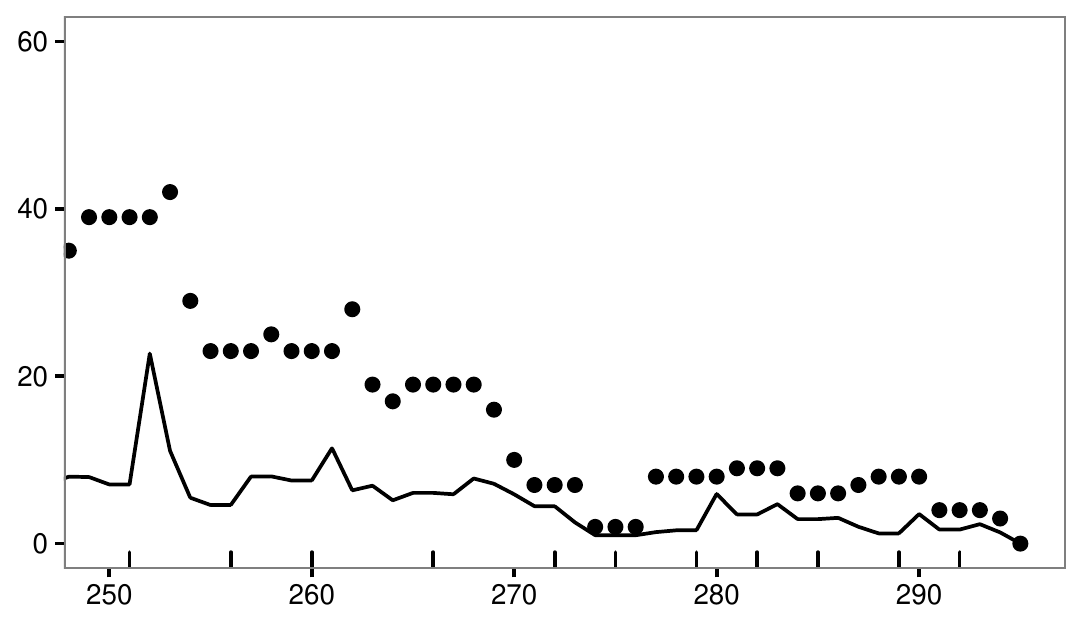}
  \caption{Number of fractional integer components (dots) and total
    fractionality (solid line) vs. ADM iterations.
    Penalty parameter updates are marked with small black vertical
    lines on top of the ADM iteration axis.}
  \label{fig:rococoC10-001000-iter-log}
\end{figure}
We close this section with an exemplary discussion of the course of
integer (in)feasibility during the iterations of our method.
\reffig{fig:rococoC10-001000-iter-log} shows the approximately 50~last
iterations of our method applied to the \MIPLIBtwentyten instance
\codeName{rococoC10-001000}.
Dots correspond to numbers of fractional integer components and the
solid line represents the course of the total fractionality measure
\begin{equation*}
  \sum_{\intIdx \in \intIdxSet} \abs{x_\intIdx - \lfloor x_\intIdx
  + 0.5 \rfloor}
\end{equation*}
of solutions $x$ of the continuous subproblems over the subsequent ADM
iterations.
The small black lines on top of the ADM iteration axis denote
iterations at which the penalty parameters are updated.

First of all, we see that penalty parameter updates are applied whenever
the ADM of the inner loop stalls, \ie whenever the ADM of the inner
loop entered an undesired integer infeasible partial minimum.
The method stops after 295~ADM iterations with an integer feasible partial
minimum.
As expected, we typically see a sawtooth phenomenon:
The total fractionality decreases between two consecutive
penalty parameter updates and increases after a penalty
parameter update.
The number of fractional integer components follows this behavior
qualitatively.
The number of ADM iterations between two consecutive penalty parameter
updates varies between 3 to 6 iterations.
Thus, convergence to partial minima does not seem to be challenging
for this specific instance.


\subsection{Mixed-Integer Nonlinear Problems}
\label{sec:comp-results:minlp}

We now turn to \mixedinteger \nonlinear programs.  The penalty based
ADM for this class of models has been implemented in \Cpp using the
so-called \GAMS Expert-level API with \GAMS~24.5.4
\cite{GamsSoftware2015}.
The continuous relaxation models are solved with
\CONOPT~3.17A~\cite{ConoptSoftware}.
According to the results from \refsec{sec:comp-results:mip} we
choose the parameters $\alpha^0=1$ and $\lambda=0.9$.
The penalty parameter update rule is chosen to be
$\inc_{\mathrm{a}}(x) = x+1$ since this variant turned out to be
favorable for \minlps.
We set the time limit to \SI{1}{\hour} as for the \mip experiments and
we do not incorporate any iteration limits for the inner ADM and the
outer penalty loop.
Throughout this section we declare an \minlp instance as solved to
feasibility if \CONOPT finds a feasible solution (\wrt its default
tolerances) with all integer components fixed to integral values.

We compare the results of our method with other recently
published numerical results concerning feasibility pump algorithms for
convex and \nonconvex \minlps.
Most of the results from the literature that we use for these
comparisons are based on the first and second version of the
\MINLPLib; \cf~\textcite{Bussieck_et_al:2003}.
Since a reasonable comparison of running times is not possible due to
differences in the used hardware, we
focus on the comparison of success rates and solution quality.

We also compared the obtained results separately for convex and
\nonconvex instances.
As expected, the results of our method are slightly better for
the convex case.
However, the results are qualitatively rather similar and we thus
present the following analysis of our computational results
without distinguishing between convex and \nonconvex instances.

First, we start with a comparison of different feasibility pump versions
presented
\ifPreprint
by~\citeauthor{DAmbrosio_et_al:2012} in~\cite{DAmbrosio_et_al:2012}
\fi
\ifSubmission
in~\cite{DAmbrosio_et_al:2012}
\fi
and our method on selected \MINLPLib instances.
\ifPreprint
\citeauthor{DAmbrosio_et_al:2012}
\fi
\ifSubmission
In~\cite{DAmbrosio_et_al:2012}, the authors
\fi
test their method on 65~\MINLPLib instances.
However, it turned out in the meantime that the used test set
contained 4~duplicate instances.
Thus, the following comparison is carried out on 61~\MINLPLib instances.
\reffig{fig:minlplib-primal-gaps-comp} (left) displays the
performance profiles using the primal-dual gap as the performance
measure; \cf~\eqref{eq:primal-dual-gap-def}.
\begin{figure}[tp]
  \centering
  \includegraphics[width=0.48\textwidth]{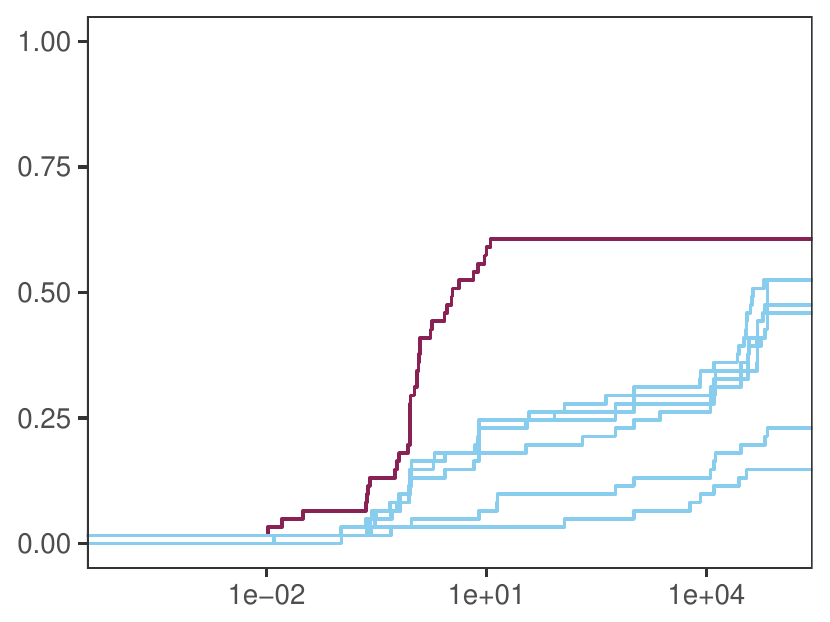}
  \quad
  \includegraphics[width=0.48\textwidth]{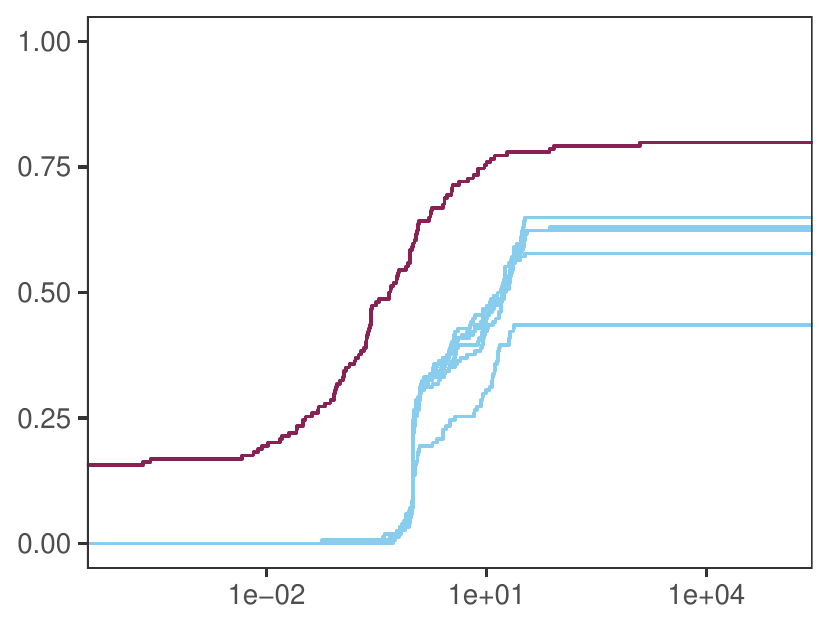}
  \vspace*{-1em}
  \caption{Left: Performance profiles for the primal-dual gap of the penalty
    ADM based feasibility pump (red) and all six feasibility pump
    variants (blue) for \minlps presented
    in~\cite{DAmbrosio_et_al:2012} on subset of 61~\MINLPLib
    instances.
    Right: Performance profiles for the primal-dual gap of the penalty
    ADM based feasibility pump (red) and different feasibility pump
    variants (blue) proposed
    \ifPreprint
    by~\textcite{Berthold:2014}
    \fi
    \ifSubmission
    in~\cite{Berthold:2014}
    \fi}
  \label{fig:minlplib-primal-gaps-comp}
\end{figure}
It can be clearly seen that the penalty ADM based feasibility pump
outperforms all feasibility pump variants presented
in~\cite{DAmbrosio_et_al:2012},
although we used a time limit of \SI{1}{\hour} in contrast to
\SI{2}{\hour} used in \cite{DAmbrosio_et_al:2012}.
Although the number of instances solved to optimality is comparably
low for all algorithms, the overall solution quality of our penalty
ADM based algorithm is significantly higher than the quality of
solutions obtained in~\textcite{DAmbrosio_et_al:2012}.
Additionally, the number of instances for which we found a feasible
solution is \SI{60.7}{\percent} whereas the percentage
of instances solved to feasibility in
\textcite{DAmbrosio_et_al:2012} ranges from
\SI{54.1}{\percent} to only \SI{16.4}{\percent}.

Complementing this comparison on the \MINLPLib test set we also
tested our method on 889 out of 1385~instances of the more recent
\MINLPLibTwo test set.
Here, we neglected all instances containing only continuous
variables.
Again, our method behaves quite satisfactory.
It computes a feasible solution for 642~instances
(\SI{72.2}{\percent}), leaving 247~instances unsolved.
We compare our solutions with the dual bound given in the
\MINLPLibTwo.
The penalty ADM based feasibility pump computes solutions
with a vanishing primal-dual gap for 83~instances;
\ie for \SI{9.3}{\percent} of all instances of the test set.
The running times on all \MINLPLibTwo instances are given in the
cumulative distribution function in
\reffig{fig:ecdf-absolute-times} (right).
Remarkably, the results are quite comparable with those for the \mip
instances:
For approximately \SI{50}{\percent} of all \MINLPLibTwo instances a
feasible solution has been found within \SI{10}{\second}
and slightly more than \SI{25}{\percent} of the instances are solved
to feasibility within \SI{1}{\second}.
The geometric mean of the running times taken over all instances for
which a feasible solution has been found within the time limit is
\SI{5.36}{\second}.
Nevertheless, the running times are fast enough so that the method
could, in principle, be embedded in a global solver as a primal heuristic.
\ifPreprint
As for the \mip results, a table with all running times and additional
information is given in the appendix.
\fi
\ifSubmission
As for the \mip results, a table with all running times and additional
information is given in the supplementary material of this paper.
\fi

Finally, we discuss the most recent (at least to the best of our
knowledge) results on a new variant
of feasibility pumps for \nonconvex \mixedinteger \nonlinear problems
that are published
\ifPreprint
by~\citeauthor{Berthold:2014} in his PhD thesis~\cite{Berthold:2014}.
\fi
\ifSubmission
in~\cite{Berthold:2014}.
\fi
The test set used
\ifPreprint
by~\citeauthor{Berthold:2014}
\fi
\ifSubmission
by the author
\fi
is neither a sub- nor
a superset of the current \MINLPLib.
Thus, we compare our method with the algorithms proposed
\ifPreprint
by~\citeauthor{Berthold:2014}
\fi
\ifSubmission
in~\cite{Berthold:2014}
\fi
on all instances of his test set that
are also part of the current \MINLPLib version,
yielding a test set of 154~instances.
Again, we compare the methods by performance profiles of the
primal-dual gap,
\cf~\reffig{fig:minlplib-primal-gaps-comp} (right).
The penalty ADM based feasibility pump significantly
outperforms all variants of the feasibility pump for \minlps proposed
\ifPreprint
by~\citeauthor{Berthold:2014}.
\fi
\ifSubmission
in~\cite{Berthold:2014}.
\fi
First, the methods
\ifPreprint
of~\citeauthor{Berthold:2014}
\fi
\ifSubmission
of~\cite{Berthold:2014}
\fi
do not solve any instance to optimality, whereas we close the
primal-dual gap for \SI{15.6}{\percent} of the instances.
Second, our method finds a feasible solution for
\SI{79.9}{\percent} of the tested instances, whereas the methods
\ifPreprint
of~\citeauthor{Berthold:2014}
\fi
\ifSubmission
of~\cite{Berthold:2014}
\fi
solve approximately \SIrange{43.5}{64.9}{\percent} to feasibility.




\section{Summary}
\label{sec:conclusion}

In this paper we have shown that idealized feasibility pumps, \ie
feasibility pumps without random perturbations, can be seen as
alternating direction methods applied to a special reformulation of
the original \mixedinteger problem.
This yields that idealized feasibility pumps converge to a partial
minimum of the reformulated problem.
If this partial minimum is not an integer feasible point,
feasibility pumps apply a random perturbation to escape this undesired
point.
We replace this random restart with a penalty
framework that encompasses the alternating direction method in the
inner loop and that replaces random perturbations by tailored penalty
parameter updates.
This way it is possible for the first time to perform a theoretical
study for a variant of the feasibility pump including restarts.
The resulting penalty based alternating direction method can be
applied to both \mips and \minlps.
Our numerical results indicate that this new version of the
feasibility pump is comparable (\wrt most recent publications) for
the case of \mips and clearly outperforms other feasibility pump
algorithms on \minlps in terms of solution quality.


\section*{Acknowledgements}%
\label{sec:acknowledgements}%
We acknowledge funding through the DFG SFB/Transregio~154,
Subprojects~A05, B07, and B08.
This research has been performed as part of the Energie Campus
N\"urnberg and supported by funding through the ``Aufbruch Bayern
(Bavaria on the move)'' initiative of the state of Bavaria.


\printbibliography
\newpage
\appendix
\section{Detailed Results for the \textsf{MIPLIB~2010}}
\label{sec:detailed-results-miplib2010}

\begin{longtable}{lrrrr}
\caption{Detailed numerical results for the penalty ADM based feasibility pump on all 84~feasible \MIPLIBtwentyten instances; \cf~Section 5.1 Mixed-Integer Linear Problems. Objective function value, running times (s), outer penalty iterations (\#Pen.), and inner ADM iterations (\#ADM) for the method with $\lambda = 0.9$, multiplicative penalty parameter update rule~$\inc_{\text{m}}$, activated \Gurobi presolve, and deactivated local branching.} \\ 
  \toprule
Instance & Objective & Time & \#Pen. & \#ADM \\ 
  \midrule
\endhead
\bottomrule
\endfoot
30n20b8 & 906.00 & 40.42 & 892 & 3828 \\ 
  acc-tight5 & --- & --- & --- & --- \\ 
  aflow40b & 3565.00 & 2.48 & 146 & 437 \\ 
  air04 & 71018.00 & 60.42 &  41 & 190 \\ 
  app1-2 & -30.00 & 3189.86 & 4133 & 15567 \\ 
  bab5 & -78587.20 & 34.25 & 178 & 540 \\ 
  beasleyC3 & 863.00 & 0.19 &  24 &  67 \\ 
  biella1 & 3743460.00 & 65.50 &  26 & 238 \\ 
  bienst2 & 73.25 & 0.21 &  20 &  76 \\ 
  binkar10\_1 & 7263.57 & 0.23 &  32 & 112 \\ 
  bley\_xl1 & 285.00 & 4.62 &  37 & 149 \\ 
  bnatt350 & --- & --- & --- & --- \\ 
  core2536-691 & 692.00 & 31.37 &  19 & 106 \\ 
  cov1075 & 120.00 & 0.17 &  12 &  24 \\ 
  csched010 & --- & --- & --- & --- \\ 
  danoint & 78.00 & 1.70 &  24 & 101 \\ 
  dfn-gwin-UUM & 125512.00 & 0.28 &  66 & 166 \\ 
  eil33-2 & 1373.60 & 0.74 &  19 &  64 \\ 
  eilB101 & 1513.00 & 0.94 &  17 &  66 \\ 
  enlight13 & 0.00 & 0.00 &   1 &   1 \\ 
  ex9 & 0.00 & 0.00 &   1 &   1 \\ 
  glass4 & 3390030000.00 & 0.28 & 103 & 353 \\ 
  gmu-35-40 & -2159090.00 & 1.89 & 298 & 1073 \\ 
  iis-100-0-cov & 100.00 & 0.27 &  12 &  24 \\ 
  iis-bupa-cov & 100.00 & 2.16 &  12 &  38 \\ 
  iis-pima-cov & 74.00 & 3.23 &  12 &  44 \\ 
  lectsched-4-obj & 9.00 & 3.46 &  66 & 287 \\ 
  m100n500k4r1 & -20.00 & 0.76 &  30 & 203 \\ 
  macrophage & 522.00 & 0.32 &  14 &  30 \\ 
  map18 & -280.00 & 250.17 & 120 & 317 \\ 
  map20 & -371.00 & 159.76 & 127 & 306 \\ 
  mcsched & 228737.00 & 1.37 &  13 &  56 \\ 
  mik-250-1-100-1 & 284980.00 & 0.48 &  71 & 202 \\ 
  mine-166-5 & -22751900.00 & 1.16 &  22 &  54 \\ 
  mine-90-10 & -558839000.00 & 1.69 &  58 & 173 \\ 
  msc98-ip & 25797700.00 & 146.92 &  48 & 307 \\ 
  mspp16 & 407.00 & 1399.15 &  91 & 249 \\ 
  mzzv11 & -16438.00 & 94.84 & 102 & 440 \\ 
  n3div36 & 170600.00 & 13.36 & 103 & 321 \\ 
  n3seq24 & 53600.00 & 271.77 &  71 & 315 \\ 
  n4-3 & 13980.00 & 0.28 &  13 &  35 \\ 
  neos-1109824 & 687.00 & 3.93 &  93 & 295 \\ 
  neos13 & -28.04 & 5.54 &  23 &  91 \\ 
  neos-1337307 & -201818.00 & 2.99 &  29 & 100 \\ 
  neos-1396125 & --- & --- & --- & --- \\ 
  neos-1601936 & 23409.00 & 266.56 &  66 & 629 \\ 
  neos18 & 17.00 & 0.87 &  21 &  86 \\ 
  neos-476283 & 407.01 & 66.98 &  78 & 226 \\ 
  neos-686190 & 16410.00 & 1.87 &  47 & 143 \\ 
  neos-849702 & --- & --- & --- & --- \\ 
  neos-916792 & 44.56 & 5.64 & 100 & 335 \\ 
  neos-934278 & 264.00 & 1091.80 &  57 & 225 \\ 
  net12 & 337.00 & 29.06 &  53 & 217 \\ 
  netdiversion & --- & --- & --- & --- \\ 
  newdano & 89.75 & 0.44 &  19 &  77 \\ 
  noswot & -38.00 & 0.17 &  61 & 240 \\ 
  ns1208400 & --- & --- & --- & --- \\ 
  ns1688347 & 35.00 & 6.14 &  49 & 227 \\ 
  ns1758913 & -1454.67 & 18.37 &   5 &  15 \\ 
  ns1830653 & --- & --- & --- & --- \\ 
  opm2-z7-s2 & -1519.00 & 35.30 &  13 &  37 \\ 
  pg5\_34 & -12628.50 & 0.76 &  65 & 195 \\ 
  pigeon-10 & -9000.00 & 2.51 & 1091 & 2732 \\ 
  pw-myciel4 & 13.00 & 3.99 &  20 & 130 \\ 
  qiu & 1235.01 & 0.44 &  19 &  51 \\ 
  rail507 & 183.00 & 30.66 &  35 & 158 \\ 
  ran16x16 & 4734.00 & 0.12 &  52 & 144 \\ 
  reblock67 & -18739300.00 & 1.14 &  51 & 169 \\ 
  rmatr100-p10 & 494.00 & 2.49 &  12 &  41 \\ 
  rmatr100-p5 & 1327.00 & 5.18 &  11 &  39 \\ 
  rmine6 & -239.11 & 3.21 &  32 &  93 \\ 
  rocII-4-11 & -0.52 & 18.74 & 292 & 1162 \\ 
  rococoC10-001000 & 34598.00 & 1.43 &  68 & 295 \\ 
  roll3000 & 18404.00 & 2.83 &  48 & 207 \\ 
  satellites1-25 & 33.00 & 44.26 &  24 &  98 \\ 
  sp98ic & 558066000.00 & 12.55 &  84 & 313 \\ 
  sp98ir & 244287000.00 & 4.73 &  49 & 168 \\ 
  tanglegram1 & 6478.00 & 114.05 &  16 &  35 \\ 
  tanglegram2 & 1445.00 & 2.51 &  16 &  34 \\ 
  timtab1 & 1415540.00 & 0.27 &  41 & 153 \\ 
  triptim1 & 22.87 & 317.56 &   7 &  16 \\ 
  unitcal\_7 & 20426600.00 & 40.69 & 112 & 400 \\ 
  vpphard & 44.00 & 85.31 &  38 & 242 \\ 
  zib54-UUE & 13164400.00 & 0.44 &  27 &  75 \\ 
  \end{longtable}


\section{Detailed Results for the \textsf{MINLPLib2}}
\label{sec:detailed-results-minlplib}

\begin{longtable}{lrrrr}
\caption{Detailed numerical results for the penalty ADM based feasibility pump on all 889 \MINLPLib2 instances with integer variables. Objective function value, running times (s), outer penalty iterations (\#Pen.), and inner ADM iterations (\#ADM) for the method with $\lambda = 0.9$, additive penalty parameter update rule~$\inc_{\text{a}}$, and deactivated local branching.} \\ 
  \toprule
Instance & Objective & Time & \#Pen. & \#ADM \\ 
  \midrule
\endhead
\bottomrule
\endfoot
4stufen & 118114.00 &   9 &  74 & 169 \\ 
  alan & 3.00 &   0 &   5 &   7 \\ 
  autocorr\_bern20-03 & -64.00 &   0 &   1 &   1 \\ 
  autocorr\_bern20-05 & -396.00 &   2 &   1 &   1 \\ 
  autocorr\_bern20-10 & -2912.00 &   1 &   1 &   1 \\ 
  autocorr\_bern20-15 & -5936.00 &   1 &   1 &   1 \\ 
  autocorr\_bern25-03 & -80.00 &   0 &   1 &   1 \\ 
  autocorr\_bern25-06 & -936.00 &   0 &   3 &   4 \\ 
  autocorr\_bern25-13 & -7984.00 &   0 &   1 &   1 \\ 
  autocorr\_bern25-19 & -14472.00 &   0 &   1 &   1 \\ 
  autocorr\_bern25-25 & -10352.00 &   0 &   1 &   1 \\ 
  autocorr\_bern30-04 & -288.00 &   0 &   1 &   1 \\ 
  autocorr\_bern30-08 & -2912.00 &   0 &   1 &   1 \\ 
  autocorr\_bern30-15 & -15384.00 &   0 &   1 &   1 \\ 
  autocorr\_bern30-23 & -30240.00 &   1 &   1 &   1 \\ 
  autocorr\_bern30-30 & -22640.00 &   1 &   1 &   1 \\ 
  autocorr\_bern35-04 & -344.00 &   0 &   1 &   1 \\ 
  autocorr\_bern35-09 & -4976.00 &   0 &   1 &   1 \\ 
  autocorr\_bern35-18 & -30712.00 &   0 &   1 &   1 \\ 
  autocorr\_bern35-26 & -54960.00 &   0 &   1 &   1 \\ 
  autocorr\_bern35-35 & -40272.00 &   1 &   1 &   1 \\ 
  autocorr\_bern40-05 & -908.00 &   0 &   1 &   1 \\ 
  autocorr\_bern40-10 & -8192.00 &   0 &   1 &   1 \\ 
  autocorr\_bern40-20 & -50228.00 &   1 &   5 &   7 \\ 
  autocorr\_bern40-30 & -94040.00 &   1 &   1 &   1 \\ 
  autocorr\_bern40-40 & -66832.00 &   1 &   1 &   1 \\ 
  autocorr\_bern45-05 & -1004.00 &   0 &   3 &   4 \\ 
  autocorr\_bern45-11 & -12532.00 &   0 &   1 &   1 \\ 
  autocorr\_bern45-23 & -84844.00 &   0 &   1 &   1 \\ 
  autocorr\_bern45-34 & -151768.00 &   0 &   1 &   1 \\ 
  autocorr\_bern45-45 & -108528.00 &   1 &   1 &   1 \\ 
  autocorr\_bern50-06 & -2072.00 &   1 &   3 &   4 \\ 
  autocorr\_bern50-13 & -23176.00 &   0 &   1 &   1 \\ 
  autocorr\_bern50-25 & -123764.00 &   1 &   3 &   4 \\ 
  autocorr\_bern50-38 & -232808.00 &   0 &   1 &   1 \\ 
  autocorr\_bern50-50 & -166168.00 &   1 &   1 &   1 \\ 
  autocorr\_bern55-06 & -2288.00 &   0 &   3 &   4 \\ 
  autocorr\_bern55-14 & -32280.00 &   1 &   3 &   4 \\ 
  autocorr\_bern55-28 & -189404.00 &   0 &   1 &   1 \\ 
  autocorr\_bern55-41 & -335980.00 &   0 &   1 &   1 \\ 
  autocorr\_bern55-55 & -238296.00 &   1 &   1 &   1 \\ 
  autocorr\_bern60-08 & -6712.00 &   0 &   1 &   1 \\ 
  autocorr\_bern60-15 & -44368.00 &   0 &   1 &   1 \\ 
  autocorr\_bern60-30 & -258304.00 &   0 &   1 &   1 \\ 
  autocorr\_bern60-45 & -476456.00 &   1 &   1 &   1 \\ 
  autocorr\_bern60-60 & -347372.00 &   1 &   1 &   1 \\ 
  batch & 309205.00 &   2 &  22 &  43 \\ 
  batch0812 & 2838520.00 &   5 &  64 & 107 \\ 
  batch0812\_nc & 3534900.00 &   4 &  32 &  70 \\ 
  batchdes & 185769.00 &   0 &   7 &  13 \\ 
  batch\_nc & 363583.00 &   5 &  60 & 105 \\ 
  batchs101006m & 776397.00 &   5 &  24 &  65 \\ 
  batchs121208m & 1336450.00 &   7 &  22 &  73 \\ 
  batchs151208m & 1588930.00 &  12 &  37 & 120 \\ 
  batchs201210m & 2408440.00 &  17 &  72 & 157 \\ 
  bchoco05 & 0.95 &   1 &   3 &   4 \\ 
  bchoco06 & 0.96 &   0 &   3 &   4 \\ 
  bchoco07 & 0.96 &   2 &   3 &   4 \\ 
  bchoco08 & --- & --- & --- & --- \\ 
  beuster & 128512.00 &  15 &  99 & 254 \\ 
  blend029 & --- & --- & 68919 & 68961 \\ 
  blend146 & 29.93 &   8 &  76 & 110 \\ 
  blend480 & -8.24 &  56 & 655 & 753 \\ 
  blend531 & --- & --- & 53222 & 53555 \\ 
  blend718 & 1.23 &   6 &  30 &  77 \\ 
  blend721 & -0.87 &   4 &  27 &  56 \\ 
  blend852 & 46.13 &   9 &  87 & 126 \\ 
  blendgap & -0.00 &   0 &   1 &   1 \\ 
  cardqp\_inlp & 3843.61 &   0 &   1 &   1 \\ 
  cardqp\_iqp & 3843.61 &   0 &   1 &   1 \\ 
  carton7 & 303.88 & 368 & 4645 & 5010 \\ 
  carton9 & 340.75 &  15 &  77 & 155 \\ 
  casctanks & 9.16 &   3 &  40 &  46 \\ 
  case\_1scv2 & 7791.24 &  16 &  33 &  86 \\ 
  cecil\_13 & -115564.00 &   9 &  65 &  92 \\ 
  chp\_partload & --- & --- & 25492 & 25701 \\ 
  clay0203h & 41709.80 &  81 & 1498 & 1519 \\ 
  clay0203m & 41737.50 &  50 & 1002 & 1022 \\ 
  clay0204h & 7830.00 &   5 &  50 &  82 \\ 
  clay0204m & 10340.00 &   3 &  35 &  48 \\ 
  clay0205h & 23484.20 &  43 & 609 & 715 \\ 
  clay0205m & 9715.00 &   5 &  57 &  86 \\ 
  clay0303h & 36613.00 &  56 & 954 & 996 \\ 
  clay0303m & 41737.50 &   8 & 101 & 170 \\ 
  clay0304h & 61315.90 &  52 & 764 & 859 \\ 
  clay0304m & 61831.50 &  38 & 694 & 748 \\ 
  clay0305h & 43405.70 &  10 & 132 & 161 \\ 
  clay0305m & 24593.10 &   7 &  92 & 121 \\ 
  contvar & 829318.00 &   7 &   8 &  35 \\ 
  crossdock\_15x7 & 14467.00 &   6 &  62 &  99 \\ 
  crossdock\_15x8 & 16765.00 & 1856 & 26313 & 27298 \\ 
  crudeoil\_lee1\_05 & 79.35 &  75 & 808 & 851 \\ 
  crudeoil\_lee1\_06 & 78.75 &  70 & 555 & 655 \\ 
  crudeoil\_lee1\_07 & 78.75 & 2868 & 24641 & 25051 \\ 
  crudeoil\_lee1\_08 & 79.35 &  18 &  72 & 106 \\ 
  crudeoil\_lee1\_09 & 78.75 & 918 & 5556 & 5841 \\ 
  crudeoil\_lee1\_10 & 78.75 &  32 & 103 & 153 \\ 
  crudeoil\_lee2\_05 & 90.00 &  31 & 117 & 158 \\ 
  crudeoil\_lee2\_06 & 97.59 & 114 & 265 & 401 \\ 
  crudeoil\_lee2\_07 & 90.00 & 329 & 949 & 1114 \\ 
  crudeoil\_lee2\_08 & --- & --- & 12801 & 13253 \\ 
  crudeoil\_lee2\_09 & --- & --- & --- & --- \\ 
  crudeoil\_lee2\_10 & --- & --- & 7412 & 8007 \\ 
  crudeoil\_lee3\_05 & 82.00 & 173 & 996 & 1033 \\ 
  crudeoil\_lee3\_06 & 84.49 & 125 & 574 & 606 \\ 
  crudeoil\_lee3\_07 & 82.90 &  29 &  35 &  65 \\ 
  crudeoil\_lee3\_08 & --- & --- & 13112 & 13645 \\ 
  crudeoil\_lee3\_09 & 82.75 &  66 &  85 & 108 \\ 
  crudeoil\_lee3\_10 & 77.50 & 144 & 145 & 198 \\ 
  crudeoil\_lee4\_05 & 132.48 &  13 &  15 &  21 \\ 
  crudeoil\_lee4\_06 & 132.49 &  16 &  15 &  22 \\ 
  crudeoil\_lee4\_07 & 132.55 & 170 & 272 & 333 \\ 
  crudeoil\_lee4\_08 & 131.54 & 175 & 192 & 299 \\ 
  crudeoil\_lee4\_09 & --- & --- & 5326 & 5417 \\ 
  crudeoil\_lee4\_10 & --- & --- & 4329 & 4802 \\ 
  crudeoil\_li01 & 4852.37 & 1247 & 19259 & 19368 \\ 
  crudeoil\_li02 & --- & --- & --- & --- \\ 
  crudeoil\_li03 & --- & --- & 31637 & 32220 \\ 
  crudeoil\_li05 & 3030.59 &  34 & 225 & 312 \\ 
  crudeoil\_li06 & 3303.84 & 111 & 869 & 947 \\ 
  crudeoil\_li11 & --- & --- & 26706 & 27063 \\ 
  crudeoil\_li21 & --- & --- & 19307 & 19619 \\ 
  csched1 & -30174.60 &   0 &   4 &   5 \\ 
  csched1a & -29903.30 &   1 &   7 &  13 \\ 
  csched2 & -160668.00 &   2 &   9 &  20 \\ 
  csched2a & -162047.00 &   7 &  44 & 120 \\ 
  deb10 & 209.43 &   1 &   9 &  13 \\ 
  deb6 & 251.66 &   1 &   3 &   4 \\ 
  deb7 & 176.08 &   2 &   3 &   4 \\ 
  deb8 & 176.08 &   1 &   3 &   4 \\ 
  deb9 & 176.08 &   1 &   3 &   4 \\ 
  densitymod & --- & --- &   4 &   8 \\ 
  dosemin2d & 173.98 &  11 &   5 &  11 \\ 
  dosemin3d & 1.32 &  29 &   7 &  13 \\ 
  du-opt & 5.34 &   0 &   3 &   4 \\ 
  du-opt5 & 112.02 &   0 &   5 &   8 \\ 
  edgecross10-010 & 4.00 &   0 &   1 &   1 \\ 
  edgecross10-020 & 19.00 &   0 &   1 &   1 \\ 
  edgecross10-030 & 53.00 &   0 &   3 &   4 \\ 
  edgecross10-040 & 142.00 &   0 &   3 &   4 \\ 
  edgecross10-050 & 301.00 &   1 &   1 &   1 \\ 
  edgecross10-060 & 470.00 &   0 &   1 &   1 \\ 
  edgecross10-070 & 735.00 &   0 &   1 &   1 \\ 
  edgecross10-080 & 1048.00 &   0 &   1 &   1 \\ 
  edgecross10-090 & 1387.00 &   0 &   1 &   1 \\ 
  edgecross14-019 & 5.00 &   0 &   1 &   1 \\ 
  edgecross14-039 & 123.00 &   1 &   1 &   1 \\ 
  edgecross14-058 & 391.00 &   0 &   1 &   1 \\ 
  edgecross14-078 & 725.00 &   0 &   1 &   1 \\ 
  edgecross14-098 & 1392.00 &   0 &   1 &   1 \\ 
  edgecross14-117 & 2168.00 &   1 &   3 &   4 \\ 
  edgecross14-137 & 2880.00 &   0 &   1 &   1 \\ 
  edgecross14-156 & 4342.00 &   0 &   3 &   4 \\ 
  edgecross14-176 & 5956.00 &   1 &   1 &   1 \\ 
  edgecross20-040 & 73.00 &   1 &   1 &   1 \\ 
  edgecross20-080 & 530.00 &   1 &   1 &   1 \\ 
  edgecross22-048 & 97.00 &   2 &   1 &   1 \\ 
  edgecross22-096 & 980.00 &   4 &   3 &   4 \\ 
  edgecross24-057 & 213.00 &   3 &   1 &   1 \\ 
  edgecross24-115 & 1429.00 &   3 &   1 &   1 \\ 
  eg\_all\_s & 8.67 &  11 &  20 &  44 \\ 
  eg\_disc2\_s & 5.68 &   3 &   3 &   4 \\ 
  eg\_disc\_s & 6.03 &   6 &   7 &  15 \\ 
  eg\_int\_s & 8.32 &   7 &   6 &  13 \\ 
  elf & 1.68 &   1 &   1 &   1 \\ 
  eniplac & -120713.00 &  43 & 636 & 764 \\ 
  enpro48pb & 188887.00 &   1 &  10 &  23 \\ 
  enpro56pb & 266762.00 &   3 &  24 &  56 \\ 
  ethanolh & -157.59 &   0 &   4 &   5 \\ 
  ethanolm & -31.27 &   9 & 116 & 168 \\ 
  ex1221 & 7.67 &   0 &   1 &   1 \\ 
  ex1222 & 1.08 &   0 &   6 &  10 \\ 
  ex1223 & 5.81 &   1 &   6 &  10 \\ 
  ex1223a & 5.81 &   1 &   5 &   8 \\ 
  ex1223b & 5.81 &   1 &   6 &  10 \\ 
  ex1224 & -0.88 &   1 &  12 &  19 \\ 
  ex1225 & 31.00 &   0 &   6 &   9 \\ 
  ex1226 & -17.00 &   0 &   1 &   1 \\ 
  ex1233 & 201540.00 &   3 &  21 &  49 \\ 
  ex1243 & 135552.00 &   2 &  18 &  47 \\ 
  ex1244 & 95046.40 &   1 &  13 &  17 \\ 
  ex1252 & --- & --- & --- & --- \\ 
  ex1252a & 143555.00 &   3 &  23 &  54 \\ 
  ex1263 & 69.60 &  86 & 1665 & 1773 \\ 
  ex1263a & 38.60 &  55 & 1101 & 1142 \\ 
  ex1264 & 31.60 &  11 & 185 & 222 \\ 
  ex1264a & 10.00 &   5 &  90 & 108 \\ 
  ex1265 & 22.30 &  16 & 231 & 343 \\ 
  ex1265a & 15.10 &   4 &  60 &  87 \\ 
  ex1266 & --- & --- & 62770 & 64607 \\ 
  ex1266a & 16.30 &   1 &   7 &  13 \\ 
  ex3pb & 103.58 &   2 &  19 &  37 \\ 
  ex4 & 659.57 &   4 &  46 &  77 \\ 
  fac1 & 172954000.00 &   9 &  85 & 185 \\ 
  fac2 & 407585000.00 &   5 &  35 &  97 \\ 
  fac3 & 34789500.00 &   3 &  23 &  53 \\ 
  faclay20h & 16941.00 &   0 &   1 &   1 \\ 
  faclay25 & 5107.00 &   1 &   1 &   1 \\ 
  faclay30 & 8970.00 &   3 &   1 &   1 \\ 
  faclay30h & 50775.00 &   3 &   1 &   1 \\ 
  faclay33 & 67677.00 &   6 &   1 &   1 \\ 
  faclay35 & 77040.00 &   8 &   1 &   1 \\ 
  faclay60 & 1564130.00 &  15 &   1 &   1 \\ 
  faclay70 & 1656440.00 & 2316 &   1 &   1 \\ 
  faclay75 & --- & --- &   1 &   0 \\ 
  faclay80 & --- & --- &   1 &   0 \\ 
  feedtray & -13.41 &   1 &   1 &   1 \\ 
  feedtray2 & --- & --- & --- & --- \\ 
  fin2bb & 0.00 &   2 &  19 &  26 \\ 
  flay02h & 37.95 &   2 &  20 &  40 \\ 
  flay02m & 37.95 &   2 &  24 &  45 \\ 
  flay03h & 48.99 &   4 &  26 &  60 \\ 
  flay03m & 48.99 &   3 &  29 &  61 \\ 
  flay04h & 54.99 &   6 &  50 & 100 \\ 
  flay04m & 54.99 &   4 &  42 &  85 \\ 
  flay05h & 80.99 &   7 &  36 &  97 \\ 
  flay05m & 64.50 &   5 &  41 &  96 \\ 
  flay06h & 106.88 &  11 &  38 & 121 \\ 
  flay06m & 66.93 &   6 &  43 & 102 \\ 
  fo7 & 25.60 &  87 & 1643 & 1691 \\ 
  fo7\_2 & 40.43 &  15 & 171 & 236 \\ 
  fo7\_ar2\_1 & 55.38 & 317 & 5449 & 5627 \\ 
  fo7\_ar25\_1 & --- & --- & 65860 & 66079 \\ 
  fo7\_ar3\_1 & 34.49 &  13 &  97 & 212 \\ 
  fo7\_ar4\_1 & 41.66 & 1369 & 25281 & 25479 \\ 
  fo7\_ar5\_1 & 38.09 &   7 &  53 & 110 \\ 
  fo8 & 47.40 &   7 &  66 &  99 \\ 
  fo8\_ar2\_1 & --- & --- & 62624 & 62999 \\ 
  fo8\_ar25\_1 & --- & --- & 61119 & 61451 \\ 
  fo8\_ar3\_1 & 45.34 &  38 & 469 & 590 \\ 
  fo8\_ar4\_1 & 38.66 &  36 & 384 & 529 \\ 
  fo8\_ar5\_1 & 51.84 & 370 & 5769 & 5963 \\ 
  fo9 & 52.78 &   7 &  60 & 109 \\ 
  fo9\_ar2\_1 & --- & --- & 52908 & 53324 \\ 
  fo9\_ar25\_1 & --- & --- & 57149 & 57561 \\ 
  fo9\_ar3\_1 & 49.40 &  46 & 446 & 592 \\ 
  fo9\_ar4\_1 & --- & --- & 54254 & 54687 \\ 
  fo9\_ar5\_1 & 51.17 &  11 &  72 & 131 \\ 
  fuel & 8566.12 &   2 &  16 &  42 \\ 
  fuzzy & --- & --- &   1 & 10135 \\ 
  gams01 & 26878.30 &  36 & 100 & 261 \\ 
  gams02 & 99281400.00 & 3427 & 7309 & 8005 \\ 
  gams03 & --- & --- & 10477 & 10747 \\ 
  gasnet & 6999380.00 &  10 &  97 & 194 \\ 
  gasprod\_sarawak01 & -31599.40 &   1 &   5 &   7 \\ 
  gasprod\_sarawak16 & -31399.40 &   5 &  11 &  16 \\ 
  gasprod\_sarawak81 & -31399.40 &  60 &  11 &  16 \\ 
  gastrans & --- & --- & 71678 & 71681 \\ 
  gastrans040 & 0.00 &   1 &   3 &   4 \\ 
  gastrans135 & 0.00 &  63 & 517 & 524 \\ 
  gastrans582\_cold13 & --- & --- & 24865 & 24973 \\ 
  gastrans582\_cold13\_95 & --- & --- & 26008 & 26017 \\ 
  gastrans582\_cold17 & 0.00 &  12 &  34 &  40 \\ 
  gastrans582\_cold17\_95 & 0.00 &  13 &  38 &  45 \\ 
  gastrans582\_cool12 & 0.00 &  10 &  22 &  27 \\ 
  gastrans582\_cool12\_95 & --- & --- & 25893 & 25904 \\ 
  gastrans582\_cool14 & --- & --- & 6717 & 20655 \\ 
  gastrans582\_cool14\_95 & --- & --- & 24012 & 24195 \\ 
  gastrans582\_freezing27 & --- & --- & 21961 & 22878 \\ 
  gastrans582\_freezing27\_95 & --- & --- & 649 & 19415 \\ 
  gastrans582\_freezing30 & --- & --- & 25209 & 25221 \\ 
  gastrans582\_freezing30\_95 & 0.00 & 108 & 728 & 736 \\ 
  gastrans582\_mild10 & --- & --- & 23990 & 23997 \\ 
  gastrans582\_mild10\_95 & --- & --- & 25798 & 25810 \\ 
  gastrans582\_mild11 & 0.00 &  49 & 295 & 304 \\ 
  gastrans582\_mild11\_95 & --- & --- & 25515 & 25527 \\ 
  gastrans582\_warm15 & --- & --- & 24775 & 25032 \\ 
  gastrans582\_warm15\_95 & --- & --- & 24674 & 24934 \\ 
  gastrans582\_warm31 & 0.00 &  44 & 271 & 279 \\ 
  gastrans582\_warm31\_95 & --- & --- & 26392 & 26403 \\ 
  gbd & 2.20 &   0 &   1 &   1 \\ 
  gear & 0.00 &   1 &   3 &   4 \\ 
  gear2 & 0.01 &   0 &   4 &   6 \\ 
  gear3 & 0.00 &   1 &   3 &   4 \\ 
  gear4 & 120.67 &   5 &  70 &  87 \\ 
  genpooling\_lee1 & --- & --- & --- & --- \\ 
  genpooling\_lee2 & --- & --- & --- & --- \\ 
  genpooling\_meyer04 & --- & --- & --- & --- \\ 
  genpooling\_meyer10 & --- & --- & --- & --- \\ 
  genpooling\_meyer15 & --- & --- & --- & --- \\ 
  ghg\_1veh & 7.78 &   0 &   1 &   1 \\ 
  ghg\_2veh & 7.78 &   0 &   1 &   1 \\ 
  ghg\_3veh & 7.77 &   0 &   4 &   5 \\ 
  gkocis & -1.41 &   1 &   7 &   9 \\ 
  graphpart\_2g-0044-1601 & -789955.00 &   1 &   1 &   1 \\ 
  graphpart\_2g-0055-0062 & --- & --- & 68626 & 68626 \\ 
  graphpart\_2g-0066-0066 & -2082170.00 &  11 & 110 & 215 \\ 
  graphpart\_2g-0077-0077 & --- & --- & 66436 & 66436 \\ 
  graphpart\_2g-0088-0088 & -5701630.00 &   0 &   1 &   1 \\ 
  graphpart\_2g-0099-9211 & -4495840.00 &   0 &   4 &   5 \\ 
  graphpart\_2g-1010-0824 & -6583360.00 &   0 &   1 &   1 \\ 
  graphpart\_2pm-0044-0044 & -11.00 &   0 &   1 &   1 \\ 
  graphpart\_2pm-0055-0055 & --- & --- & 70312 & 70312 \\ 
  graphpart\_2pm-0066-0066 & -27.00 &   1 &   6 &   9 \\ 
  graphpart\_2pm-0077-0777 & --- & --- & 64537 & 64537 \\ 
  graphpart\_2pm-0088-0888 & -46.00 &   0 &   4 &   5 \\ 
  graphpart\_2pm-0099-0999 & -56.00 &   1 &  18 &  22 \\ 
  graphpart\_3g-0234-0234 & --- & --- & 69513 & 69513 \\ 
  graphpart\_3g-0244-0244 & -2702200.00 &   0 &   1 &   1 \\ 
  graphpart\_3g-0333-0333 & --- & --- & 66677 & 66677 \\ 
  graphpart\_3g-0334-0334 & -3279970.00 &   0 &   1 &   1 \\ 
  graphpart\_3g-0344-0344 & --- & --- & 66770 & 66770 \\ 
  graphpart\_3g-0444-0444 & -6621100.00 &   0 &   1 &   1 \\ 
  graphpart\_3pm-0234-0234 & --- & --- & 74786 & 74786 \\ 
  graphpart\_3pm-0244-0244 & -27.00 &   0 &   1 &   1 \\ 
  graphpart\_3pm-0333-0333 & --- & --- & 71653 & 71653 \\ 
  graphpart\_3pm-0334-0334 & -33.00 &   0 &   1 &   1 \\ 
  graphpart\_3pm-0344-0344 & --- & --- & 68708 & 68708 \\ 
  graphpart\_3pm-0444-0444 & -57.00 &   1 &   4 &   5 \\ 
  graphpart\_clique-20 & 147.00 &   0 &   1 &   1 \\ 
  graphpart\_clique-30 & 495.00 &   0 &   1 &   1 \\ 
  graphpart\_clique-40 & 1183.00 &   0 &   1 &   1 \\ 
  graphpart\_clique-50 & --- & --- & 60889 & 60889 \\ 
  graphpart\_clique-60 & 4010.00 &   2 &  34 &  36 \\ 
  graphpart\_clique-70 & --- & --- & 50784 & 50784 \\ 
  hda & -4322.55 &   6 &  18 &  39 \\ 
  heatexch\_gen1 & 410804.00 &   2 &  19 &  42 \\ 
  heatexch\_gen2 & 739019.00 &   1 &  20 &  26 \\ 
  heatexch\_gen3 & 109097.00 &   4 &  28 &  46 \\ 
  heatexch\_spec1 & 219858.00 &   2 &  18 &  42 \\ 
  heatexch\_spec2 & 849922.00 &   0 &   3 &   4 \\ 
  heatexch\_spec3 & 319465.00 &   1 &   8 &  13 \\ 
  heatexch\_trigen & 977262.00 & 144 & 2367 & 2602 \\ 
  hmittelman & 16.00 &   2 &  22 &  39 \\ 
  hybriddynamic\_fixed & 1.47 &   0 &   7 &  12 \\ 
  hybriddynamic\_var & 1.54 &   0 &   5 &   9 \\ 
  hydroenergy1 & 207178.00 &   2 &  29 &  39 \\ 
  hydroenergy2 & 369251.00 &   4 &  31 &  42 \\ 
  hydroenergy3 & 742404.00 &   7 &  31 &  44 \\ 
  ibs2 & 4.88 & 1032 &  55 & 125 \\ 
  jit1 & 173983.00 &   0 &   3 &   4 \\ 
  johnall & -222.37 &   5 &  54 &  57 \\ 
  kport20 & 33.50 &   2 &  14 &  37 \\ 
  kport40 & 42.02 &  70 & 1184 & 1289 \\ 
  lip & 5428650.00 &   0 &   3 &   6 \\ 
  lop97ic & 4535.18 &   7 &  10 &  31 \\ 
  lop97icx & 4590.48 &   3 &  13 &  31 \\ 
  m3 & 55.80 &   2 &  21 &  32 \\ 
  m6 & 123.98 &  13 & 105 & 206 \\ 
  m7 & --- & --- & 66377 & 66664 \\ 
  m7\_ar2\_1 & --- & --- & 65520 & 65796 \\ 
  m7\_ar25\_1 & --- & --- & 69125 & 69424 \\ 
  m7\_ar3\_1 & --- & --- & 66925 & 67206 \\ 
  m7\_ar4\_1 & 450.97 & 127 & 1993 & 2166 \\ 
  m7\_ar5\_1 & 511.21 &  61 & 857 & 1025 \\ 
  mbtd & 5.58 &  66 &  13 &  41 \\ 
  meanvarx & 14.37 &   1 &  23 &  26 \\ 
  meanvarxsc & --- & --- & --- & --- \\ 
  milinfract & 2.63 &   8 &   5 &   7 \\ 
  minlphix & 345.51 &   3 &  53 &  63 \\ 
  multiplants\_mtg1a & 363.57 &   2 &  17 &  35 \\ 
  multiplants\_mtg1b & 212.21 &  33 & 602 & 617 \\ 
  multiplants\_mtg1c & 406.98 &   5 &  31 &  52 \\ 
  multiplants\_mtg2 & 7051.41 &   4 &  65 &  75 \\ 
  multiplants\_mtg5 & 4706.88 &  75 & 1228 & 1258 \\ 
  multiplants\_mtg6 & 4032.27 &  13 & 155 & 177 \\ 
  multiplants\_stg1 & 250.44 & 1000 & 16363 & 16375 \\ 
  multiplants\_stg1a & 38.85 &  19 & 267 & 284 \\ 
  multiplants\_stg1b & 136.38 &  35 & 498 & 506 \\ 
  multiplants\_stg1c & --- & --- & 61799 & 61808 \\ 
  multiplants\_stg5 & --- & --- & 52743 & 52952 \\ 
  multiplants\_stg6 & --- & --- & 45323 & 46702 \\ 
  ndcc12 & 108.11 &   8 &  59 & 106 \\ 
  ndcc12persp & --- & --- & --- & --- \\ 
  ndcc13 & 94.17 &   6 &  17 &  56 \\ 
  ndcc13persp & --- & --- & --- & --- \\ 
  ndcc14 & 130.16 &  19 &  70 & 247 \\ 
  ndcc14persp & --- & --- & --- & --- \\ 
  ndcc15 & 95.28 &   4 &  22 &  50 \\ 
  ndcc15persp & --- & --- & --- & --- \\ 
  ndcc16 & 131.63 &   7 &  28 &  67 \\ 
  ndcc16persp & --- & --- & --- & --- \\ 
  netmod\_dol1 & -0.01 &  18 &  22 &  40 \\ 
  netmod\_dol2 & -0.49 &  37 &  16 &  44 \\ 
  netmod\_kar1 & 0.00 &   3 &  17 &  31 \\ 
  netmod\_kar2 & 0.00 &   3 &  17 &  31 \\ 
  no7\_ar2\_1 & --- & --- & 66159 & 66423 \\ 
  no7\_ar25\_1 & 175.87 & 156 & 2635 & 2871 \\ 
  no7\_ar3\_1 & 149.22 &  13 & 142 & 201 \\ 
  no7\_ar4\_1 & 188.94 & 141 & 2040 & 2246 \\ 
  no7\_ar5\_1 & 153.84 & 148 & 2440 & 2576 \\ 
  nous1 & 1.57 &   0 &   3 &   4 \\ 
  nous2 & 0.63 &   0 &   3 &   4 \\ 
  nuclear104 & --- & --- & --- & --- \\ 
  nuclear10a & --- & --- & 1737 & 2056 \\ 
  nuclear10b & -1.15 & 1604 &  31 & 124 \\ 
  nuclear14 & -1.13 &   3 &   4 &   5 \\ 
  nuclear14a & -1.13 &   1 &   1 &   1 \\ 
  nuclear14b & -1.09 &  13 &  51 &  73 \\ 
  nuclear25 & -1.12 &   7 &  22 &  29 \\ 
  nuclear25a & -1.12 &   5 &  22 &  29 \\ 
  nuclear25b & -1.09 &  29 &  63 & 153 \\ 
  nuclear49 & -1.15 &  30 &  13 &  17 \\ 
  nuclear49a & -1.15 &  26 &  25 &  33 \\ 
  nuclear49b & -1.13 & 133 &  25 &  74 \\ 
  nuclearva & -1.01 &   3 &  33 &  45 \\ 
  nuclearvb & -1.02 &   1 &   1 &   1 \\ 
  nuclearvc & -0.98 &   5 &  49 &  59 \\ 
  nuclearvd & -1.04 &  10 &  63 &  75 \\ 
  nuclearve & -1.02 &   1 &   7 &   9 \\ 
  nuclearvf & --- & --- & --- & --- \\ 
  nvs01 & 13.10 &   1 &   5 &   7 \\ 
  nvs02 & 5.96 &   2 &  35 &  38 \\ 
  nvs03 & 16.00 &   0 &  11 &  14 \\ 
  nvs04 & --- & --- & --- & --- \\ 
  nvs05 & 5.47 &   0 &   3 &   4 \\ 
  nvs06 & 1.86 &   1 &   3 &   4 \\ 
  nvs07 & 4.00 &   1 &   5 &   6 \\ 
  nvs08 & 23.83 &   1 &  10 &  14 \\ 
  nvs09 & -43.13 &   0 &   1 &   1 \\ 
  nvs10 & -308.40 &   0 &   4 &   7 \\ 
  nvs11 & -416.40 &   3 &  69 &  74 \\ 
  nvs12 & -477.00 &   0 &   3 &   5 \\ 
  nvs13 & -585.20 &   1 &   3 &   5 \\ 
  nvs14 & -40358.20 &   2 &  35 &  38 \\ 
  nvs15 & 1.00 &   0 &   3 &   4 \\ 
  nvs16 & 14.20 &   7 &  55 & 108 \\ 
  nvs17 & -1078.20 &   3 &  52 &  57 \\ 
  nvs18 & -778.40 &   0 &   3 &   5 \\ 
  nvs19 & -1070.00 &   1 &  10 &  17 \\ 
  nvs20 & 230.92 &   1 &   7 &  13 \\ 
  nvs21 & -4.27 &   8 & 172 & 174 \\ 
  nvs22 & 6.06 &   0 &   4 &   6 \\ 
  nvs23 & -1078.40 &   1 &   3 &   5 \\ 
  nvs24 & -1001.00 &   1 &   7 &  14 \\ 
  o7 & 190.62 &  25 & 374 & 418 \\ 
  o7\_2 & 161.38 &  17 & 147 & 281 \\ 
  o7\_ar2\_1 & --- & --- & 61789 & 62056 \\ 
  o7\_ar25\_1 & 160.47 & 173 & 2936 & 3100 \\ 
  o7\_ar3\_1 & 162.83 &  34 & 446 & 562 \\ 
  o7\_ar4\_1 & 182.29 &  69 & 971 & 1138 \\ 
  o7\_ar5\_1 & 166.99 &  85 & 1274 & 1443 \\ 
  o8\_ar4\_1 & 345.47 &  22 & 130 & 256 \\ 
  o9\_ar4\_1 & 341.57 &  55 & 497 & 691 \\ 
  oaer & -1.92 &   1 &   4 &   5 \\ 
  oil & -0.87 &   2 &   3 &   4 \\ 
  oil2 & -0.73 &   1 &   3 &   4 \\ 
  ortez & -9532.04 &   2 &  13 &  20 \\ 
  parallel & --- & --- & 65520 & 65520 \\ 
  pb302035 & 4052260.00 &  28 &  25 &  34 \\ 
  pb302055 & 4087020.00 & 152 & 155 & 200 \\ 
  pb302075 & 4624370.00 &  62 &  49 &  77 \\ 
  pb302095 & --- & --- & 3207 & 4701 \\ 
  pb351535 & 5474850.00 & 180 & 240 & 306 \\ 
  pb351555 & 5256790.00 &  26 &  21 &  41 \\ 
  pb351575 & 6457260.00 &  16 &  15 &  21 \\ 
  pb351595 & 11847500.00 & 915 & 919 & 1524 \\ 
  pooling\_epa1 & -280.81 &   8 & 110 & 134 \\ 
  pooling\_epa2 & -4268.72 &   3 &  36 &  44 \\ 
  pooling\_epa3 & --- & --- & --- & --- \\ 
  portfol\_buyin & 0.04 &   1 &   3 &   4 \\ 
  portfol\_card & 0.03 &   1 &   5 &   8 \\ 
  portfol\_classical050\_1 & -0.09 &   1 &  10 &  21 \\ 
  portfol\_classical200\_2 & -0.06 &   8 &  18 &  50 \\ 
  portfol\_robust050\_34 & -0.00 &   1 &  11 &  16 \\ 
  portfol\_robust100\_09 & -0.08 &   2 &  11 &  23 \\ 
  portfol\_robust200\_03 & -0.08 &  12 &  16 &  44 \\ 
  portfol\_roundlot & 0.15 & 3412 & 5488 & 69538 \\ 
  portfol\_shortfall050\_68 & -1.06 &   1 &  10 &  15 \\ 
  portfol\_shortfall100\_04 & -1.07 &   3 &  13 &  25 \\ 
  portfol\_shortfall200\_05 & -1.06 &   7 &  14 &  27 \\ 
  primary & --- & --- & --- & --- \\ 
  prob02 & 112235.00 &   1 &  10 &  13 \\ 
  prob03 & 11.00 &   2 &  18 &  31 \\ 
  prob10 & 3.45 &   0 &   3 &   4 \\ 
  procsel & -1.41 &   0 &   5 &   7 \\ 
  product & -2075.33 &  16 &  56 & 103 \\ 
  product2 & --- & --- & --- & --- \\ 
  qap & 411560.00 &   6 &  13 &  28 \\ 
  qapw & 395664.00 &  48 & 106 & 345 \\ 
  ravempb & 269590.00 &   1 &  10 &  23 \\ 
  risk2bpb & -55.48 &   1 &  17 &  21 \\ 
  routingdelay\_bigm & --- & --- & 27714 & 27777 \\ 
  routingdelay\_proj & 149.70 &  25 & 145 & 170 \\ 
  rsyn0805h & 1280.01 &   1 &  17 &  26 \\ 
  rsyn0805m & 928.36 &   3 &  24 &  50 \\ 
  rsyn0805m02h & 2121.61 &   5 &   9 &  24 \\ 
  rsyn0805m02m & 923.55 &   6 &  34 &  80 \\ 
  rsyn0805m03h & 2996.90 &   7 &   8 &  19 \\ 
  rsyn0805m03m & 2256.38 &   8 &  33 &  77 \\ 
  rsyn0805m04h & 7145.11 &   7 &   5 &  11 \\ 
  rsyn0805m04m & 4641.20 &  10 &  32 &  78 \\ 
  rsyn0810h & 1675.12 &   2 &  18 &  26 \\ 
  rsyn0810m & 801.54 &   3 &  21 &  49 \\ 
  rsyn0810m02h & 1631.39 &   5 &   9 &  24 \\ 
  rsyn0810m02m & 987.64 &   6 &  32 &  76 \\ 
  rsyn0810m03h & 2671.49 &   8 &   7 &  18 \\ 
  rsyn0810m03m & 2355.46 &  20 & 162 & 226 \\ 
  rsyn0810m04h & 6472.92 &  23 &  12 &  26 \\ 
  rsyn0810m04m & 4525.62 &  11 &  25 &  71 \\ 
  rsyn0815h & 1262.05 &   1 &  11 &  21 \\ 
  rsyn0815m & 923.47 &   3 &  26 &  52 \\ 
  rsyn0815m02h & 1579.85 &   6 &  11 &  30 \\ 
  rsyn0815m02m & 598.63 &   6 &  32 &  71 \\ 
  rsyn0815m03h & 2702.87 &  10 &   8 &  20 \\ 
  rsyn0815m03m & 2697.07 &  10 &  35 &  81 \\ 
  rsyn0815m04h & 3220.26 &  21 &   9 &  24 \\ 
  rsyn0815m04m & 1329.02 &  13 &  30 &  79 \\ 
  rsyn0820h & 1138.41 &   2 &   9 &  17 \\ 
  rsyn0820m & 723.84 &   4 &  24 &  56 \\ 
  rsyn0820m02h & 1005.44 &   6 &   8 &  18 \\ 
  rsyn0820m02m & 258.97 &   7 &  32 &  84 \\ 
  rsyn0820m03h & 1984.43 &  14 &   8 &  16 \\ 
  rsyn0820m03m & 1800.99 &  17 &  52 & 113 \\ 
  rsyn0820m04h & 2363.62 &  25 &   8 &  23 \\ 
  rsyn0820m04m & 1642.15 &  18 &  30 &  89 \\ 
  rsyn0830h & 506.70 &   2 &  10 &  19 \\ 
  rsyn0830m & 203.31 &   4 &  30 &  64 \\ 
  rsyn0830m02h & 661.04 &   4 &   8 &  18 \\ 
  rsyn0830m02m & -273.98 &   6 &  33 &  68 \\ 
  rsyn0830m03h & 1431.22 &  10 &   8 &  18 \\ 
  rsyn0830m03m & 137.73 &  12 &  34 &  85 \\ 
  rsyn0830m04h & 2284.01 &  21 &   9 &  19 \\ 
  rsyn0830m04m & 128.87 &  19 &  36 &  90 \\ 
  rsyn0840h & 297.64 &   2 &   6 &  14 \\ 
  rsyn0840m & 54.73 &   5 &  30 &  69 \\ 
  rsyn0840m02h & 612.11 &   5 &  10 &  18 \\ 
  rsyn0840m02m & -362.91 &   7 &  29 &  70 \\ 
  rsyn0840m03h & 2664.97 &  15 &  11 &  20 \\ 
  rsyn0840m03m & 2121.79 &  12 &  30 &  72 \\ 
  rsyn0840m04h & 2327.32 &  21 &   8 &  16 \\ 
  rsyn0840m04m & -18.45 &  21 &  32 &  84 \\ 
  saa\_2 & 12.75 &  35 &  50 &  78 \\ 
  sep1 & -510.08 &   0 &   5 &   7 \\ 
  sepasequ\_complex & 538.16 &  12 &  55 &  97 \\ 
  sepasequ\_convent & 514.98 &  11 &  24 &  34 \\ 
  sfacloc1\_2\_80 & 13.55 &   1 &   3 &   4 \\ 
  sfacloc1\_2\_90 & 29.57 &   0 &   3 &   4 \\ 
  sfacloc1\_2\_95 & 18.85 &   0 &   1 &   1 \\ 
  sfacloc1\_3\_80 & 9.10 &   0 &   3 &   4 \\ 
  sfacloc1\_3\_90 & 11.88 &   0 &   3 &   4 \\ 
  sfacloc1\_3\_95 & 12.46 &   0 &   1 &   1 \\ 
  sfacloc1\_4\_80 & 8.46 &   1 &   3 &   4 \\ 
  sfacloc1\_4\_90 & 12.00 &   0 &   3 &   4 \\ 
  sfacloc1\_4\_95 & 11.24 &   1 &   3 &   4 \\ 
  sfacloc2\_2\_80 & --- & --- & 38561 & 38572 \\ 
  sfacloc2\_2\_90 & 29.96 &   7 &  77 & 109 \\ 
  sfacloc2\_2\_95 & 30.30 &  92 & 1745 & 1788 \\ 
  sfacloc2\_3\_80 & 25.20 &  15 &  82 & 127 \\ 
  sfacloc2\_3\_90 & 29.73 &   9 & 103 & 143 \\ 
  sfacloc2\_3\_95 & 25.17 &   3 &  27 &  54 \\ 
  sfacloc2\_4\_80 & 21.03 & 156 & 1403 & 1510 \\ 
  sfacloc2\_4\_90 & 28.72 &  87 & 1272 & 1390 \\ 
  sfacloc2\_4\_95 & 26.74 & 682 & 11503 & 11784 \\ 
  slay04h & 14763.90 &   3 &  20 &  44 \\ 
  slay04m & 12200.30 &   1 &  20 &  37 \\ 
  slay05h & 24164.50 &   3 &  16 &  38 \\ 
  slay05m & 23290.00 &   1 &  14 &  28 \\ 
  slay06h & 38850.80 &   3 &  19 &  46 \\ 
  slay06m & 33168.50 &   2 &  16 &  35 \\ 
  slay07h & 69708.50 &   5 &  26 &  58 \\ 
  slay07m & 70870.80 &   3 &  27 &  60 \\ 
  slay08h & 105595.00 &   7 &  28 &  71 \\ 
  slay08m & 85921.30 &   3 &  25 &  52 \\ 
  slay09h & 156117.00 &  10 &  30 &  83 \\ 
  slay09m & 160234.00 &   5 &  31 &  86 \\ 
  slay10h & 197463.00 &  18 &  37 & 106 \\ 
  slay10m & 176821.00 &   4 &  29 &  73 \\ 
  smallinvDAXr1b010-011 & --- & --- & --- & --- \\ 
  smallinvDAXr1b020-022 & --- & --- & --- & --- \\ 
  smallinvDAXr1b050-055 & --- & --- & --- & --- \\ 
  smallinvDAXr1b100-110 & --- & --- & --- & --- \\ 
  smallinvDAXr1b150-165 & --- & --- & --- & --- \\ 
  smallinvDAXr1b200-220 & --- & --- & --- & --- \\ 
  smallinvDAXr2b010-011 & --- & --- & --- & --- \\ 
  smallinvDAXr2b020-022 & --- & --- & --- & --- \\ 
  smallinvDAXr2b050-055 & --- & --- & --- & --- \\ 
  smallinvDAXr2b100-110 & --- & --- & --- & --- \\ 
  smallinvDAXr2b150-165 & --- & --- & --- & --- \\ 
  smallinvDAXr2b200-220 & --- & --- & --- & --- \\ 
  smallinvDAXr3b010-011 & --- & --- & --- & --- \\ 
  smallinvDAXr3b020-022 & --- & --- & --- & --- \\ 
  smallinvDAXr3b050-055 & --- & --- & --- & --- \\ 
  smallinvDAXr3b100-110 & --- & --- & --- & --- \\ 
  smallinvDAXr3b150-165 & --- & --- & --- & --- \\ 
  smallinvDAXr3b200-220 & --- & --- & --- & --- \\ 
  smallinvDAXr4b010-011 & --- & --- & --- & --- \\ 
  smallinvDAXr4b020-022 & --- & --- & --- & --- \\ 
  smallinvDAXr4b050-055 & --- & --- & --- & --- \\ 
  smallinvDAXr4b100-110 & --- & --- & --- & --- \\ 
  smallinvDAXr4b150-165 & --- & --- & --- & --- \\ 
  smallinvDAXr4b200-220 & --- & --- & --- & --- \\ 
  smallinvDAXr5b010-011 & --- & --- & --- & --- \\ 
  smallinvDAXr5b020-022 & --- & --- & --- & --- \\ 
  smallinvDAXr5b050-055 & --- & --- & --- & --- \\ 
  smallinvDAXr5b100-110 & --- & --- & --- & --- \\ 
  smallinvDAXr5b150-165 & --- & --- & --- & --- \\ 
  smallinvDAXr5b200-220 & --- & --- & --- & --- \\ 
  smallinvSNPr1b010-011 & --- & --- & --- & --- \\ 
  smallinvSNPr1b020-022 & --- & --- & --- & --- \\ 
  smallinvSNPr1b050-055 & --- & --- & --- & --- \\ 
  smallinvSNPr1b100-110 & --- & --- & --- & --- \\ 
  smallinvSNPr1b150-165 & --- & --- & --- & --- \\ 
  smallinvSNPr1b200-220 & --- & --- & --- & --- \\ 
  smallinvSNPr2b010-011 & --- & --- & --- & --- \\ 
  smallinvSNPr2b020-022 & --- & --- & --- & --- \\ 
  smallinvSNPr2b050-055 & --- & --- & --- & --- \\ 
  smallinvSNPr2b100-110 & --- & --- & --- & --- \\ 
  smallinvSNPr2b150-165 & --- & --- & --- & --- \\ 
  smallinvSNPr2b200-220 & --- & --- & --- & --- \\ 
  smallinvSNPr3b010-011 & --- & --- & --- & --- \\ 
  smallinvSNPr3b020-022 & --- & --- & --- & --- \\ 
  smallinvSNPr3b050-055 & --- & --- & --- & --- \\ 
  smallinvSNPr3b100-110 & --- & --- & --- & --- \\ 
  smallinvSNPr3b150-165 & --- & --- & --- & --- \\ 
  smallinvSNPr3b200-220 & --- & --- & --- & --- \\ 
  smallinvSNPr4b010-011 & --- & --- & --- & --- \\ 
  smallinvSNPr4b020-022 & --- & --- & --- & --- \\ 
  smallinvSNPr4b050-055 & --- & --- & --- & --- \\ 
  smallinvSNPr4b100-110 & --- & --- & --- & --- \\ 
  smallinvSNPr4b150-165 & --- & --- & --- & --- \\ 
  smallinvSNPr4b200-220 & --- & --- & --- & --- \\ 
  smallinvSNPr5b010-011 & --- & --- & --- & --- \\ 
  smallinvSNPr5b020-022 & --- & --- & --- & --- \\ 
  smallinvSNPr5b050-055 & --- & --- & --- & --- \\ 
  smallinvSNPr5b100-110 & --- & --- & --- & --- \\ 
  smallinvSNPr5b150-165 & --- & --- & --- & --- \\ 
  smallinvSNPr5b200-220 & --- & --- & --- & --- \\ 
  space25 & 919.99 & 279 & 3235 & 4234 \\ 
  space25a & 657.73 &   5 &  80 & 106 \\ 
  space960 & 7985000.00 & 106 &  39 & 111 \\ 
  spectra2 & 14.04 &   3 &  25 &  53 \\ 
  sporttournament06 & 10.00 &   0 &   3 &   4 \\ 
  sporttournament08 & 22.00 &   0 &   1 &   1 \\ 
  sporttournament10 & 38.00 &   0 &   1 &   1 \\ 
  sporttournament12 & 58.00 &   0 &   1 &   1 \\ 
  sporttournament14 & 80.00 &   0 &   1 &   1 \\ 
  sporttournament16 & 100.00 &   0 &   3 &   4 \\ 
  sporttournament18 & 134.00 &   0 &   1 &   1 \\ 
  sporttournament20 & 162.00 &   1 &   3 &   4 \\ 
  sporttournament22 & 198.00 &   0 &   3 &   4 \\ 
  sporttournament24 & 230.00 &   0 &   1 &   1 \\ 
  sporttournament26 & 282.00 &   0 &   1 &   1 \\ 
  sporttournament28 & 298.00 &   0 &   3 &   4 \\ 
  sporttournament30 & 318.00 &   0 &   1 &   1 \\ 
  sporttournament32 & 384.00 &   0 &   1 &   1 \\ 
  sporttournament34 & 438.00 &   1 &   1 &   1 \\ 
  sporttournament36 & 472.00 &   0 &   1 &   1 \\ 
  sporttournament38 & 526.00 &   0 &   1 &   1 \\ 
  sporttournament40 & 590.00 &   0 &   1 &   1 \\ 
  sporttournament42 & 666.00 &   0 &   1 &   1 \\ 
  sporttournament44 & 706.00 &   0 &   1 &   1 \\ 
  sporttournament46 & 806.00 &   0 &   1 &   1 \\ 
  sporttournament48 & 904.00 &   0 &   1 &   1 \\ 
  sporttournament50 & 948.00 &   1 &   1 &   1 \\ 
  spring & 1.35 &   1 &  19 &  27 \\ 
  squfl010-025 & 233.69 &   3 &  20 &  41 \\ 
  squfl010-025persp & 214.11 & 247 &   3 & 776 \\ 
  squfl010-040 & 283.98 &   5 &  21 &  43 \\ 
  squfl010-040persp & 240.60 &  72 &   1 & 100 \\ 
  squfl010-080 & 523.50 &   8 &  22 &  45 \\ 
  squfl010-080persp & 509.70 & 1418 &   4 & 1684 \\ 
  squfl015-060 & 422.51 &   5 &  24 &  49 \\ 
  squfl015-060persp & 366.62 & 522 &   8 & 367 \\ 
  squfl015-080 & 596.93 &  10 &  29 &  56 \\ 
  squfl015-080persp & 407.13 & 994 &   3 & 608 \\ 
  squfl020-040 & 286.55 &   4 &  23 &  45 \\ 
  squfl020-040persp & 221.02 & 339 &  11 & 249 \\ 
  squfl020-050 & 323.25 &   7 &  25 &  50 \\ 
  squfl020-050persp & 244.05 & 331 &  11 & 193 \\ 
  squfl020-150 & 1131.79 & 113 &  36 &  73 \\ 
  squfl020-150persp & 562.07 & 2348 &  15 & 384 \\ 
  squfl025-025 & 244.44 &   4 &  22 &  43 \\ 
  squfl025-025persp & 178.63 & 719 &  10 & 645 \\ 
  squfl025-030 & 215.34 &   3 &  19 &  37 \\ 
  squfl025-030persp & 215.34 & 1367 &  10 & 1057 \\ 
  squfl025-040 & 299.01 &   7 &  24 &  47 \\ 
  squfl025-040persp & 225.39 & 650 &  29 & 385 \\ 
  squfl030-100 & 766.47 &  36 &  33 &  66 \\ 
  squfl030-100persp & --- & --- &   3 & 535 \\ 
  squfl030-150 & 628.11 & 374 &  31 &  61 \\ 
  squfl030-150persp & --- & --- &  10 & 307 \\ 
  squfl040-080 & 294.95 &  50 &  22 &  43 \\ 
  squfl040-080persp & 578.72 & 2651 &  23 & 378 \\ 
  sssd08-04 & 203796.00 &   2 &  12 &  36 \\ 
  sssd08-04persp & 413575.00 &   2 &  18 &  38 \\ 
  sssd12-05 & 421690.00 &   3 &  18 &  51 \\ 
  sssd12-05persp & 458644.00 &   2 &  21 &  52 \\ 
  sssd15-04 & 266257.00 &   1 &  14 &  37 \\ 
  sssd15-04persp & 281947.00 &   3 &  21 &  47 \\ 
  sssd15-06 & 782066.00 &   4 &  21 &  61 \\ 
  sssd15-06persp & 553100.00 &   2 &  12 &  28 \\ 
  sssd15-08 & 882079.00 &   5 &  26 &  77 \\ 
  sssd15-08persp & 599059.00 &   3 &  17 &  49 \\ 
  sssd16-07 & 666186.00 &   4 &  25 &  78 \\ 
  sssd16-07persp & 421588.00 &   2 &  14 &  29 \\ 
  sssd18-06 & 438633.00 &   3 &  14 &  41 \\ 
  sssd18-06persp & 484779.00 &   2 &  18 &  47 \\ 
  sssd18-08 & 949385.00 &   5 &  46 & 100 \\ 
  sssd18-08persp & 1054060.00 &   4 &  21 &  67 \\ 
  sssd20-04 & 402023.00 &   7 &  16 &  42 \\ 
  sssd20-04persp & 403765.00 &   3 &  20 &  58 \\ 
  sssd20-08 & 522836.00 &   3 &  14 &  59 \\ 
  sssd20-08persp & 568422.00 &   3 &  21 &  55 \\ 
  sssd22-08 & 600312.00 &   5 &  22 &  59 \\ 
  sssd22-08persp & 567598.00 &   3 &  22 &  59 \\ 
  sssd25-04 & 302373.00 &   2 &  19 &  46 \\ 
  sssd25-04persp & 300946.00 &   2 &  30 &  55 \\ 
  sssd25-08 & 585480.00 &   3 &  19 &  55 \\ 
  sssd25-08persp & 601061.00 &   3 &  20 &  51 \\ 
  st\_e13 & --- & --- & --- & --- \\ 
  st\_e14 & 5.81 &   0 &   6 &  10 \\ 
  st\_e15 & --- & --- & --- & --- \\ 
  st\_e27 & 2.00 &   0 &   1 &   1 \\ 
  st\_e29 & -0.88 &   1 &  12 &  19 \\ 
  st\_e31 & --- & --- & --- & --- \\ 
  st\_e32 & -0.72 &  14 & 243 & 289 \\ 
  st\_e35 & 71468.10 &   1 &   3 &   4 \\ 
  st\_e36 & --- & --- & 8225 & 51928 \\ 
  st\_e38 & 7197.73 &   0 &   1 &   1 \\ 
  st\_e40 & --- & --- & --- & --- \\ 
  st\_miqp1 & 380.50 &   1 &  15 &  28 \\ 
  st\_miqp2 & 2.00 &   1 &  12 &  16 \\ 
  st\_miqp3 & --- & --- & --- & --- \\ 
  st\_miqp4 & --- & --- & --- & --- \\ 
  st\_miqp5 & -333.89 &   0 &   1 &   1 \\ 
  stockcycle & 260786.00 &  12 & 147 & 218 \\ 
  st\_test1 & 0.00 &   1 &   6 &  11 \\ 
  st\_test2 & --- & --- & --- & --- \\ 
  st\_test3 & --- & --- & --- & --- \\ 
  st\_test4 & --- & --- & --- & --- \\ 
  st\_test5 & -110.00 &   2 &  28 &  46 \\ 
  st\_test6 & 664.00 &   3 &  46 &  75 \\ 
  st\_test8 & -29605.00 &   0 &   4 &   5 \\ 
  st\_testgr1 & -12.76 &   1 &  11 &  15 \\ 
  st\_testgr3 & -20.50 &   2 &  19 &  29 \\ 
  st\_testph4 & -80.50 &   0 &   3 &   5 \\ 
  super1 & --- & --- & --- & --- \\ 
  super2 & --- & --- & --- & --- \\ 
  super3 & --- & --- & --- & --- \\ 
  super3t & --- & --- & --- & --- \\ 
  supplychain & --- & --- & --- & --- \\ 
  supplychainp1\_020306 & 593975.00 &   0 &   3 &   5 \\ 
  supplychainp1\_022020 & 3378850.00 &   5 &   5 &   9 \\ 
  supplychainp1\_030510 & 1215690.00 &   0 &   3 &   5 \\ 
  supplychainp1\_053050 & 8543940.00 & 111 &   5 &  11 \\ 
  supplychainr1\_020306 & 628623.00 &   0 &   5 &   9 \\ 
  supplychainr1\_022020 & 4050140.00 &   8 &  14 &  43 \\ 
  supplychainr1\_030510 & 1568450.00 &   1 &  10 &  16 \\ 
  supplychainr1\_053050 & 8588080.00 & 180 &  32 & 122 \\ 
  syn05h & 837.73 &   0 &   3 &   4 \\ 
  syn05m & 831.45 &   1 &  10 &  18 \\ 
  syn05m02h & 3032.74 &   0 &   3 &   4 \\ 
  syn05m02m & 3026.74 &   1 &  13 &  25 \\ 
  syn05m03h & 4027.37 &   0 &   3 &   4 \\ 
  syn05m03m & 3987.03 &   1 &  10 &  22 \\ 
  syn05m04h & 5510.39 &   0 &   3 &   4 \\ 
  syn05m04m & 5499.39 &   1 &  13 &  28 \\ 
  syn10h & 1267.35 &   0 &   3 &   4 \\ 
  syn10m & 560.25 &   0 &  10 &  14 \\ 
  syn10m02h & 2310.30 &   1 &   3 &   4 \\ 
  syn10m02m & 2275.85 &   2 &  13 &  25 \\ 
  syn10m03h & 3354.68 &   1 &   3 &   4 \\ 
  syn10m03m & 2773.64 &   2 &  25 &  38 \\ 
  syn10m04h & 4557.06 &   0 &   3 &   4 \\ 
  syn10m04m & 3814.77 &   3 &  26 &  41 \\ 
  syn15h & 853.28 &   0 &   3 &   4 \\ 
  syn15m & 503.63 &   1 &  10 &  25 \\ 
  syn15m02h & 2832.75 &   0 &   3 &   4 \\ 
  syn15m02m & 2777.75 &   1 &  14 &  25 \\ 
  syn15m03h & 3850.18 &   0 &   3 &   4 \\ 
  syn15m03m & 3795.18 &   3 &  17 &  34 \\ 
  syn15m04h & 4937.48 &   1 &   3 &   4 \\ 
  syn15m04m & 4882.48 &   2 &  18 &  31 \\ 
  syn20h & 924.26 &   1 &   3 &   4 \\ 
  syn20m & 722.64 &   2 &   8 &  19 \\ 
  syn20m02h & 1752.13 &   1 &   3 &   4 \\ 
  syn20m02m & 1693.13 &   2 &  23 &  38 \\ 
  syn20m03h & 2646.95 &   1 &   3 &   4 \\ 
  syn20m03m & 2574.01 &   3 &  18 &  41 \\ 
  syn20m04h & 3532.74 &   1 &   3 &   4 \\ 
  syn20m04m & 3475.74 &   3 &  18 &  35 \\ 
  syn30h & 134.03 &   0 &   3 &   5 \\ 
  syn30m & -38.75 &   2 &  15 &  28 \\ 
  syn30m02h & 393.25 &   1 &   3 &   5 \\ 
  syn30m02m & -13.34 &   2 &  16 &  29 \\ 
  syn30m03h & 646.05 &   2 &   3 &   5 \\ 
  syn30m03m & 31.70 &   3 &  15 &  34 \\ 
  syn30m04h & 859.05 &   3 &   3 &   5 \\ 
  syn30m04m & 111.41 &   5 &  13 &  33 \\ 
  syn40h & 58.66 &   1 &   3 &   5 \\ 
  syn40m & -25.08 &   2 &  27 &  46 \\ 
  syn40m02h & 379.76 &   1 &   3 &   4 \\ 
  syn40m02m & 149.17 &   3 &  11 &  30 \\ 
  syn40m03h & 390.15 &   4 &   4 &   6 \\ 
  syn40m03m & -12.17 &   6 &  28 &  51 \\ 
  syn40m04h & 896.96 &   4 &   3 &   4 \\ 
  syn40m04m & 609.95 &   6 &  12 &  32 \\ 
  synheat & 219858.00 &   2 &  18 &  43 \\ 
  synthes1 & 7.09 &   1 &   8 &  14 \\ 
  synthes2 & 80.29 &   2 &  17 &  34 \\ 
  synthes3 & 82.37 &   1 &  14 &  28 \\ 
  tanksize & 1.27 &   0 &   6 &  10 \\ 
  telecomsp\_metro & --- & --- &   1 &   1 \\ 
  telecomsp\_njlata & --- & --- &   1 &   2 \\ 
  telecomsp\_nor\_sun & --- & --- &   1 &   0 \\ 
  telecomsp\_pacbell & 336210.00 & 3146 &  35 & 176 \\ 
  tln12 & 315.60 & 207 & 2860 & 3350 \\ 
  tln2 & 23.30 &  15 & 284 & 313 \\ 
  tln4 & 9.30 &   5 &  92 & 103 \\ 
  tln5 & 16.50 &   3 &  47 &  65 \\ 
  tln6 & 20.50 &  13 & 177 & 209 \\ 
  tln7 & 39.00 & 157 & 3173 & 3239 \\ 
  tloss & 24.30 &   2 &  41 &  62 \\ 
  tls12 & --- & --- & 37596 & 45964 \\ 
  tls2 & 11.30 & 193 & 3385 & 3543 \\ 
  tls4 & 22.00 & 492 & 9538 & 9781 \\ 
  tls5 & 15.50 &  81 & 1319 & 1590 \\ 
  tls6 & 36.10 & 152 & 2418 & 2859 \\ 
  tls7 & 48.80 & 1693 & 27458 & 28995 \\ 
  tltr & 54.60 &   2 &  23 &  32 \\ 
  transswitch0009p & --- & --- & --- & --- \\ 
  transswitch0009r & --- & --- & --- & --- \\ 
  transswitch0014p & --- & --- & --- & --- \\ 
  transswitch0014r & --- & --- & --- & --- \\ 
  transswitch0030p & --- & --- & --- & --- \\ 
  transswitch0030r & --- & --- & --- & --- \\ 
  transswitch0039p & --- & --- & --- & --- \\ 
  transswitch0039r & --- & --- & --- & --- \\ 
  transswitch0057p & --- & --- & --- & --- \\ 
  transswitch0057r & --- & --- & --- & --- \\ 
  transswitch0118p & --- & --- & --- & --- \\ 
  transswitch0118r & --- & --- & --- & --- \\ 
  transswitch0300p & --- & --- & --- & --- \\ 
  transswitch0300r & --- & --- & --- & --- \\ 
  transswitch2383wpp & --- & --- &   1 &   0 \\ 
  transswitch2383wpr & --- & --- & --- & --- \\ 
  transswitch2736spp & --- & --- &   1 &   0 \\ 
  transswitch2736spr & --- & --- & --- & --- \\ 
  tspn05 & 191.25 &   0 &   1 &   1 \\ 
  tspn08 & 290.57 &   0 &   1 &   1 \\ 
  tspn10 & 225.13 &   0 &   1 &   1 \\ 
  tspn12 & 270.90 &   0 &   1 &   1 \\ 
  tspn15 & 334.73 &   2 &   3 &   6 \\ 
  unitcommit1 & 585256.00 &  14 &  35 &  58 \\ 
  unitcommit2 & 632283.00 &  44 &  23 &  90 \\ 
  uselinear & --- & --- & --- & --- \\ 
  util & 999.84 &   2 &  34 &  39 \\ 
  var\_con10 & 452.69 &   1 &   5 &   7 \\ 
  var\_con5 & 285.87 &   1 &   3 &   4 \\ 
  waste & 1169.08 &  18 &  82 & 139 \\ 
  wastepaper3 & 0.03 &   3 &  16 &  32 \\ 
  wastepaper4 & 0.03 &   2 &  22 &  32 \\ 
  wastepaper5 & 0.04 &   2 &  24 &  42 \\ 
  wastepaper6 & 0.01 &   2 &  19 &  37 \\ 
  water3 & --- & --- & --- & --- \\ 
  water4 & 1323.97 &  70 & 1294 & 1381 \\ 
  watercontamination0202 & 129.15 & 467 &  43 &  95 \\ 
  watercontamination0202r & 2690.24 &  25 &  97 & 196 \\ 
  watercontamination0303 & 580.69 & 811 &  60 & 137 \\ 
  watercontamination0303r & 8654.50 &  91 & 131 & 277 \\ 
  waterful2 & --- & --- & --- & --- \\ 
  waternd1 & --- & --- & --- & --- \\ 
  waternd2 & --- & --- & --- & --- \\ 
  waternd\_blacksburg & 518613.00 &  21 & 271 & 315 \\ 
  waternd\_fossiron & 346062.00 & 1035 & 14379 & 14507 \\ 
  waternd\_fosspoly0 & 93204600.00 &  10 &  61 & 126 \\ 
  waternd\_fosspoly1 & --- & --- &  10 & 33692 \\ 
  waternd\_hanoi & 7438010.00 &  36 & 525 & 640 \\ 
  waternd\_modena & 346062.00 & 1080 & 14379 & 14507 \\ 
  waternd\_pescara & 4575970.00 & 1060 & 11611 & 11743 \\ 
  waternd\_shamir & 434000.00 &   1 &  10 &  15 \\ 
  waterno1\_01 & 306.82 &  10 & 181 & 187 \\ 
  waterno1\_02 & 331.56 &   5 &  50 &  58 \\ 
  waterno1\_03 & 1257.91 &   9 & 106 & 117 \\ 
  waterno1\_04 & 1202.02 &  63 & 817 & 840 \\ 
  waterno1\_06 & 1093.12 &   7 &  33 &  54 \\ 
  waterno1\_09 & 1516.33 &  17 &  50 &  76 \\ 
  waterno1\_12 & 1826.62 &  35 & 140 & 166 \\ 
  waterno1\_18 & 2398.41 &  40 &  46 &  74 \\ 
  waterno1\_24 & 3130.12 & 1745 & 6004 & 6042 \\ 
  waterno2\_01 & 29.08 &   4 &  63 &  69 \\ 
  waterno2\_02 & 156.66 &   7 &  86 &  94 \\ 
  waterno2\_03 & 134.20 &   8 &  81 & 105 \\ 
  waterno2\_04 & 257.23 &  11 &  81 & 118 \\ 
  waterno2\_06 & 442.50 &  16 &  88 & 137 \\ 
  waterno2\_09 & 1422.16 &  31 &  87 & 159 \\ 
  waterno2\_12 & 3119.55 &  48 &  99 & 211 \\ 
  waterno2\_18 & 7359.11 & 105 & 107 & 253 \\ 
  waterno2\_24 & 9711.81 & 301 & 477 & 653 \\ 
  waters & --- & --- & --- & --- \\ 
  watersbp & --- & --- & --- & --- \\ 
  watersym1 & --- & --- & --- & --- \\ 
  watersym2 & --- & --- & --- & --- \\ 
  watertreatnd\_conc & --- & --- & --- & --- \\ 
  watertreatnd\_flow & 349525.00 &  19 & 294 & 297 \\ 
  waterx & 934.86 &   0 &   7 &  12 \\ 
  waterz & 315610.00 & 1575 & 13551 & 26704 \\ 
  windfac & 0.25 &   2 &  26 &  28 \\ 
  \end{longtable}


\end{document}
